\newif\ifLTEX
\LTEXtrue

\documentclass[a4paper]{amsart}

\usepackage{mathtools,amssymb}
\usepackage[abbrev]{amsrefs}
\usepackage{xparse}

\ifLTEX
\usepackage{graphicx}
\else
\usepackage[dvipdfmx]{graphicx}
\fi
 
\usepackage{hyperref}
\usepackage{subcaption}
\captionsetup[subfigure]{labelfont=rm}
\usepackage{enumitem}
\usepackage{amsthm}
\usepackage{pdfpages}
\usepackage[all]{xy} 
\usepackage{float}
\usepackage{marginfix}


\pagestyle{headings}

\newtheorem{thm}{Theorem}[section]
\newtheorem{lem}[thm]{Lemma}

\newtheorem{prop}[thm]{Proposition}

\theoremstyle{definition}

\newcommand{\R}{\mathbb{R}}

\newcommand{\Z}{\mathbb{Z}}

\newcommand{\M}{\mathrm{Mod}_{0,2n+2}}
\newcommand{\Mb}{\mathrm{Mod}_{0,2n+1}^1}
\newcommand{\Mp}{\mathrm{Mod}_{0,2n+2;\ast }}
\newcommand{\LM}{\mathrm{LMod}_{2n+2}}
\newcommand{\LMb}{\mathrm{LMod}_{2n+1}^1}
\newcommand{\LMp}{\mathrm{LMod}_{2n+2,\ast }}
\newcommand{\PM}{\mathrm{PMod}_{0,2n+2}}

\newcommand{\Mg}{\mathrm{Mod}_{g}}
\newcommand{\Mgb}{\mathrm{Mod}_{g}^1}

\newcommand{\SM}{\mathrm{SMod}_{g;k}}
\newcommand{\SMb}{\mathrm{SMod}_{g;k}^1}
\newcommand{\SMp}{\mathrm{SMod}_{g,\ast ;k}}
\newcommand{\B}{\mathcal{B}}

\numberwithin{equation}{section}


\allowdisplaybreaks
\sloppy

\title[Small generating set for balanced superelliptic mapping class group]{The balanced superelliptic mapping class groups are generated by three elements}

\author[G.~Omori]{Genki Omori}
\address{
(Genki Omori)
Department of Mathematics, Faculty of Science and Technology, Tokyo University of Science, 2641 Yamazaki, Noda-shi, Chiba, 278-8510 Japan
}
\email{omori\_genki@ma.noda.tus.ac.jp}

\if0
\author[]{}
\address{
()
}
\email{}
\fi

\subjclass[2010]{57S05, 57M07, 57M05, 20F05}

\date{\today}

\begin{document}
\maketitle
\begin{abstract}
The balanced superelliptic mapping class group is the normalizer of the transformation group of the balanced superelliptic covering in the mapping class group of the total surface.  
We prove that the balanced superelliptic mapping class groups with either one marked point, one boundary component, or no marked points and boundary are generated by three elements. 
To prove this, we also show that its liftable mapping class groups are also generated by three elements. 
These generating sets are minimal except for several no marked points and boundary cases.  
\end{abstract}

\section{Introduction}


Let $\Sigma _{g}$ be a connected closed oriented surface of genus $g\geq 0$. 
For subsets $A$ and $B$ of $\Sigma _g$ $(g\geq 0)$, the {\it mapping class group} $\mathrm{Mod}(\Sigma _{g}; A, B)$ of the tuple $(\Sigma _{g}, A, B)$ is the group of isotopy classes of orientation-preserving self-homeomorphisms on $\Sigma _{g}$ which preserve $A$ setwise and $B$ pointwise. 
When $A$ is a set of $n$ points and $B$ is a disjoint union of $b$ disks such that $A$ and $B$ have no intersections, we denote the mapping class group by $\mathrm{Mod}_{g,n}^b$. 
We denote simply $\mathrm{Mod}_{g}^b=\mathrm{Mod}_{g,0}^b$, $\mathrm{Mod}_{g,n}=\mathrm{Mod}_{g,n}^0$, and $\mathrm{Mod}_{g}=\mathrm{Mod}_{g,0}^0$. 

Dehn~\cite{Dehn} proved that $\mathrm{Mod}_{g,n}^b$ is generated by Dehn twists. 
After that small finite generating sets for $\mathrm{Mod}_{g}$ by Dehn twists were given by Lickorish~\cite{Lickorish} and Humphries~\cite{Humphries}, and finite presentations for $\mathrm{Mod}_{g,n}^b$ were given by Gervais~\cite{Gervais} for $g\geq 1$ and $n=0$ and by Labruere-Paris~\cite{Labruere-Paris} for $g\geq 1$ and $b\geq 1$. 
Generators of these presentations for $n=0, 1$ are consist of Dehn twists. 
In~\cite{Humphries}, Humphries also proved that $\mathrm{Mod}_{g}$ does not generated by at most $2g$ Dehn twists and his generating set is minimal in generating sets for $\Mg $ which consist of Dehn twists. 

As a study of more smaller generating set for the mapping class group, Wajnryb~\cite{Wajnryb} proved that $\mathrm{Mod}_{g}^b$ is generated by two elements for $g\geq 1$ and $b\in \{0,1\}$. 
Since $\mathrm{Mod}_{g}^b$ is not a cyclic group, this Wajnryb's generating set is minimal. 
After that, Korkmaz~\cite{Korkmaz} proved that $\mathrm{Mod}_{g}^b$ is generated by two elements whose one element is a Dehn twist for $g\geq 1$ and $b\in \{0,1\}$. 
Monden~\cite{Monden} showed that $\mathrm{Mod}_{g,n}$ is generated by two elements for $g\geq 3$ and $n\geq 0$, and this generating set is also minimal. 

For integers $n\geq 1$ and $k\geq 2$ with $g=n(k-1)$, the \textit{balanced superelliptic covering map} $p=p_{g,k}\colon \Sigma _g\to \Sigma _0$ is a branched covering map with $2n+2$ branch points $p_1,\ p_2,\ \dots ,\ p_{2n+2}\in \Sigma _{0}$, their unique preimages $\widetilde{p}_1,\ \widetilde{p}_2,\ \dots ,\ \widetilde{p}_{2n+2}\in \Sigma _{g}$, and the covering transformation group generated by the \textit{balanced superelliptic rotation} $\zeta =\zeta _{g,k}$ of order $k$ (precisely defined in Section~\ref{section_bscov} and see Figure~\ref{fig_bs_periodic_map}). 
When $k=2$, $\zeta =\zeta _{g,2}$ coincides with the hyperelliptic involution, and for $k\geq 3$, the balanced superelliptic covering space was introduced by Ghaswala and Winarski~\cite{Ghaswala-Winarski2}. 

Let $D$ be a disk on $\Sigma _0-\{ p_1,\ p_2,\ \dots ,\ p_{2n+1}\}$ whose interior includes the point $p_{2n+2}$ and $\widetilde{D}$ the preimage of $D$ with respect to $p$.  
Then, denote by $\Sigma _g^1$ for $g\geq 1$ the complement of the interior of $\widetilde{D}$ in $\Sigma _g$. 
For $g=n(k-1)\geq 1$, an orientation-preserving self-homeomorphism $\varphi $ on $\Sigma _{g}$ (resp. on $\Sigma _{g}^1$ whose restriction to $\partial \Sigma _{g}^1$ is the identity map) is \textit{symmetric} for $\zeta =\zeta _{g,k}$ if $\varphi \left< \zeta \right> \varphi ^{-1}=\left< \zeta \right> $ (resp. $\varphi \left< \zeta |_{\Sigma _g^1} \right> \varphi ^{-1}=\left< \zeta |_{\Sigma _g^1} \right> $). 
We abuse notation and denote simply $\zeta |_{\Sigma _g^1}=\zeta $. 
We regard $\mathrm{Mod}_{g,1}$ and $\Mgb $ as the groups $\mathrm{Mod}(\Sigma _g; \{ \widetilde{p}_{2n+2}\} , \emptyset )$ and $\mathrm{Mod}(\Sigma _g; \emptyset , \widetilde{D})$, respectively. 
The \textit{balanced superelliptic mapping class group} (or \textit{symmetric mapping class group}) $\SM $ (resp. $\SMp$ and $\SMb$) is the subgroup of $\Mg $ (resp. $\mathrm{Mod}_{g,1}$ and $\Mgb$) which consists of elements represented by symmetric homeomorphisms. 
Birman and Hilden~\cite{Birman-Hilden3} showed that $\SM $ (resp. $\SMp$ and $\SMb$) coincides with the group of symmetric isotopy classes of symmetric homeomorphisms on $\Sigma _g$ (resp. $(\Sigma _g, \widetilde{p}_{2n+2})$ and $\Sigma _g^1$). 

We denote by $\Sigma _0^1$ the complement of the interior of $D$ in $\Sigma _0$. 
An orientation-preserving self-homeomorphism $\varphi $ on $\Sigma _{0}$ (resp. on $\Sigma _{0}^1$ whose restriction to $\partial \Sigma _{0}^1$ is the identity map) is \textit{liftable} with respect to 
$p=p_{g,k}$ if there exists an orientation-preserving self-homeomorphism $\widetilde{\varphi }$ on $\Sigma _{g}$ (resp. on $\Sigma _{g}^1$ whose restriction to $\partial \Sigma _{g}^1$ is the identity map) such that $p\circ \widetilde{\varphi }=\varphi \circ p$ (resp. $p|_{\Sigma _g^1}\circ \widetilde{\varphi }=\varphi \circ p|_{\Sigma _g^1}$), namely, the following diagrams commute: 
\[
\xymatrix{
\Sigma _g \ar[r]^{\widetilde{\varphi }} \ar[d]_p &  \Sigma _{g} \ar[d]^p & \Sigma _g^1 \ar[r]^{\widetilde{\varphi }}\ar[d]_{p|_{\Sigma _g^1}}  &  \Sigma _{g}^1\ar[d]^{p|_{\Sigma _g^1}} \\
\Sigma _{0}  \ar[r]_{\varphi } &\Sigma _{0}, \ar@{}[lu]|{\circlearrowright} & \Sigma _{0}^1  \ar[r]_{\varphi } &\Sigma _{0}^1. \ar@{}[lu]|{\circlearrowright}
}
\] 
Put $\B =\{ p_1,\ p_2,\ \dots ,\ p_{2n+2}\}$. 
We regard $\M $ and $\Mb $ as the groups $\mathrm{Mod}(\Sigma _0; \B , \emptyset )$ and $\mathrm{Mod}(\Sigma _0; \B -\{ p_{2n+2}\} , D)$, respectively, and denote by $\Mp $ the group $\mathrm{Mod}(\Sigma _0; \B -\{ p_{2n+2}\} , \{ p_{2n+2}\} )$. 
The \textit{liftable mapping class group} $\mathrm{LMod}_{2n+2;k}$ (resp. $\mathrm{LMod}_{2n+2,\ast ;k}$ and $\mathrm{LMod}_{2n+1;k}^1$) is the subgroup of $\M $ (resp. $\Mp $ and $\Mb $) which consists of elements represented by liftable homomorphisms.
Birman and Hilden~\cite{Birman-Hilden2} proved that $\mathrm{LMod}_{2n+1;k}^1$ is isomorphic to $\SMb $ and $\mathrm{LMod}_{2n+2;k}$ (resp. $\mathrm{LMod}_{2n+2,\ast ;k}$) is isomorphic to the quotient of $\SM $ (resp. $\SMp$) by $\left< \zeta \right> $.  

When $k=2$, $\zeta =\zeta _{g,2}$ is the hyperelliptic involution and corresponding symmetric mapping class groups are called the hyperelliptic mapping class groups.  
In this case, $\mathrm{LMod}_{2n+2;2}$ (resp. $\mathrm{LMod}_{2n+2,\ast ;2}$ and $\mathrm{LMod}_{2n+1;2}^1$) is equal to $\M $ (resp. $\Mp $ and $\Mb $) and a finite presentation for $\mathrm{SMod}_{g;2}$ was given by Birman and Hilden~\cite{Birman-Hilden1}. 
Stukow~\cite{Stukow} gave a minimal generating set for $\mathrm{SMod}_{g;2}$ by two torsion elements. 

When $k\geq 3$, Ghaswala and Winarski~\cite{Ghaswala-Winarski2} proved that $\mathrm{LMod}_{2n+2;k}$ is a proper subgroup of $\M $. 
They also gave a finite presentation for $\mathrm{LMod}_{2n+2;k}$ in~\cite{Ghaswala-Winarski1}.
By Lemma~3.6 in~\cite{Ghaswala-Winarski1} and Lemmas~4.2 and 4.4 in~\cite{Hirose-Omori}, we show that the group $\mathrm{LMod}_{2n+2,\ast ;k}$ (resp. $\mathrm{LMod}_{2n+1;k}^1$) is also a proper subgroup of $\Mp $ (resp. $\Mb $) for $k\geq 3$ and also show that $\mathrm{LMod}_{2n+2;k}=\mathrm{LMod}_{2n+2;l}$, $\mathrm{LMod}_{2n+2,\ast ;k}=\mathrm{LMod}_{2n+2,\ast ;l}$, and $\mathrm{LMod}_{2n+1;k}^1=\mathrm{LMod}_{2n+1;l}^1$ for $l>k\geq 3$. 
Hence we omit ``$k$'' in the notation of the liftable mapping class groups for $k\geq 3$ (i.e. we express $\mathrm{LMod}_{2n+2;k}=\LM $, $\mathrm{LMod}_{2n+2,\ast ;k}=\LMp $, and $\mathrm{LMod}_{2n+1;k}^1=\LMb $ for $k\geq 3$). 
Hirose and the author~\cite{Hirose-Omori} gave another finite presentation for $\LM $ and finite presentations for the groups $\LMp$, $\LMb$, $\SM $, $\SMp $, and $\SMb $. 

The main theorems in this paper are as follows. 

\begin{thm}\label{thm_lmod}
$\LMp $ and $\LMb $ for $n\geq 2$ and $\LM $ for $n\geq 1$ are generated by three elements. 
$\mathrm{LMod}_{4,\ast }$ and $\mathrm{LMod}_{3}^1$ are generated by two elements. 
\end{thm}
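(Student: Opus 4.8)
\emph{Proof strategy.} The plan is to deduce everything from the finite presentations of $\LM$, $\LMp$, and $\LMb$ obtained by Hirose and the author in~\cite{Hirose-Omori} (which themselves build on the presentation of $\LM$ in~\cite{Ghaswala-Winarski1}). From those presentations one reads off an explicit finite generating set consisting of Dehn twists and half-twists supported near small subconfigurations of the branch set $\B$, together with one periodic ``rotation'' class $w$ cyclically permuting the branch points (and fixing the distinguished point or boundary in the cases of $\LMp$ and $\LMb$). The first observation is that conjugation by the powers $w^i$ carries each such ``local'' generator to another one of the same type, so that a whole $w$-orbit of local generators already lies in $\langle w,t\rangle$ for any single representative $t$ of that orbit. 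Thus $\langle w,t\rangle$ captures all the local twists, and one is left only with the finitely many remaining generators — typically one or two ``exceptional'' classes not lying in a single $w$-orbit, such as a twist or half-twist involving $p_{2n+2}$ or a curve enclosing many branch points.

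The argument would then run as follows. First, one checks that the chosen rotation $w$ genuinely lies in the liftable mapping class group; for $k\ge 3$ this is a real constraint, since $\LM$, $\LMp$, $\LMb$ are proper subgroups of $\M$, $\Mp$, $\Mb$ (as recalled in the introduction), and it is verified either by exhibiting an explicit lift of $w$ to $\Sigma_g$ or $\Sigma_g^1$ or by checking that $w$ preserves the monodromy data of $p_{g,k}$. Second, taking the third generator $s$ to be a suitable exceptional class, one uses the braid relations and the chain/lantern-type relations of the Hirose--Omori presentation to rewrite every remaining generator as a word in $w$, $t$, and $s$; with the presentation in hand this is a finite computation. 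Third, one treats the three flavours in tandem, using the disk-collapsing surjection $\LMb\to\LMp$ (with kernel the boundary twist $T_{\partial D}$) and the inclusion $\LMp\hookrightarrow\LM$, so that the computations for one group feed the others. Finally, for $n=1$, that is for $\mathrm{LMod}_{4,\ast }$ and $\mathrm{LMod}_{3}^{1}$, the presentation is small enough that the exceptional generator $s$ is already a word in $w$ and $t$ (the $w$-orbit of $t$ exhausts the needed twists), so two elements suffice; this is checked by direct inspection of the presentation.

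The step I expect to be the main obstacle is the second one: the whole scheme works only if $w$, $t$, and $s$ are chosen so that the conjugation relations together with the braid and chain relations actually reconstruct every generator of the presentation — and, crucially, so that a \emph{single} extra class $s$ reaches all of the exceptional generators. Pinning down such a triple, and in the $n=1$ case verifying that even $s$ is redundant, is the technical heart of the proof; the liftability check for $w$, the passage between $\LM$, $\LMp$, and $\LMb$, and the base-case inspection are comparatively routine.
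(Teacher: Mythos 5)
Your plan works for $\LM$ and there it is essentially the paper's argument: the paper takes $w=r_1$ (the $\tfrac{2\pi}{2n+2}$-rotation, liftable because it is parity-reversing), $t=h_1$ and $s=t_{1,2}$; conjugation by $r_1$ produces all $h_i$, and the relations $h_it_{i,i+1}h_i^{-1}=t_{i+1,i+2}$ together with chain-type relations from the Hirose--Omori presentation rewrite every generator $t_{i,j}$ of $\PM$ in terms of the $h_i$ and $t_{1,2}$, while the $\Psi$-images of $h_1,\dots,h_{2n},r_1$ generate $W_{2n+2}$. The $n=1$ statement for $\mathrm{LMod}_{4,\ast}$ and $\mathrm{LMod}_3^1$ is, as you say, immediate from Corollary~5.4 of Hirose--Omori.

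The genuine gap is in your treatment of $\LMp$ and $\LMb$: the element you want, ``a periodic rotation class $w$ cyclically permuting the branch points and fixing the distinguished point or boundary,'' does not exist in these groups. The group $\Mb$ is torsion-free (mapping classes fixing a nonempty boundary pointwise), so there is no periodic element at all in $\LMb$; and in $\Mp$ a rotation cyclically permuting $p_1,\dots,p_{2n+1}$ induces a $(2n+1)$-cycle fixing $p_{2n+2}$, and no nontrivial power of such a cycle is parity-preserving (the cycle has odd length) nor parity-reversing (it fixes $p_{2n+2}\in\B_e$, so it cannot interchange $\B_o$ and $\B_e$); by Lemma~\ref{lem_GW} it is therefore not liftable, so your liftability check fails exactly where you would need it. This is not a cosmetic issue: the paper has to replace the rotation by the non-periodic word $h_{2n-1}\cdots h_2h_1t_{1,2}$, prove by hand (Lemma~\ref{lem_rel_LMp}) that conjugation by it sends $h_i$ to $h_{i+2}$ for $1\leq i\leq 2n-3$, take \emph{both} $h_1$ and $h_2$ as the other two generators (the shift is by $2$, so a single orbit of one half-rotation is not enough), recover $t_{1,2}$ as $h_1^{-1}h_2^{-1}\cdots h_{2n-1}^{-1}\cdot(h_{2n-1}\cdots h_2h_1t_{1,2})$, and then invoke Corollary~5.4 of Hirose--Omori. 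So the ``technical heart'' you correctly identify is precisely the point where your proposed choice of $w$ cannot be made to work for two of the three families, and a different mechanism is required.
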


\begin{thm}\label{thm_smod}
Assume that $n\geq 1$ and $k\geq 3$ with $g=n(k-1)$. 
Then, $\SMp $ and $\SMb $ for $n\geq 2$ and $\SM $ for $n\geq 1$ are generated by three elements. 
$\mathrm{SMod}_{g,\ast ;k}$ and $\mathrm{SMod}_{g;k}^1$ for $n=1$ are generated by two elements. 
\end{thm}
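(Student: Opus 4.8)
The plan is to deduce Theorem~\ref{thm_smod} from Theorem~\ref{thm_lmod} by exploiting the Birman--Hilden correspondence recalled in the introduction, namely that $\LMb \cong \SMb$ and that $\LM$ (resp.\ $\LMp$) is the quotient of $\SM$ (resp.\ $\SMp$) by the central cyclic group $\left< \zeta \right>$ of order $k$. First I would treat the boundary case: since $\SMb \cong \LMb$ as groups, the statement for $\SMb$ is literally the statement for $\LMb$ already proved in Theorem~\ref{thm_lmod}, giving three generators for $n\geq 2$ and two generators for $n=1$ (where $\LMb = \mathrm{LMod}_3^1$). So no further work is needed there.

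For $\SM$ and $\SMp$ the issue is lifting a generating set through the central extension
\[
1 \longrightarrow \left< \zeta \right> \longrightarrow \SM \longrightarrow \LM \longrightarrow 1
\]
(and the analogous one for $\SMp$). The key point is that the extension does not cost an extra generator, because $\zeta$ itself can be absorbed. Concretely, Theorem~\ref{thm_lmod} gives generators $\bar f_1,\bar f_2,\bar f_3$ of $\LM$; choose symmetric lifts $f_1,f_2,f_3 \in \SM$. Then $f_1,f_2,f_3$ together with $\zeta$ generate $\SM$, so a priori we get four generators. To remove $\zeta$, I would instead pick the lifts more carefully, or post-compose one of them by a suitable power of $\zeta$: since $\left< \zeta \right>$ is central and cyclic of order $k$, it suffices to arrange that $\zeta$ lies in the subgroup generated by $f_1,f_2,f_3$. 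I would look at the explicit generators produced in the proof of Theorem~\ref{thm_lmod} (built from Dehn twists along symmetric curves and a finite-order ``rotation'' element), and use the fact that the balanced superelliptic rotation $\zeta$ is, up to the lifted mapping classes, expressible via the lift of the hyperelliptic-type symmetry already among the $\bar f_i$; replacing $f_1$ by $f_1\zeta^{a}$ for an appropriate $a$ then puts $\zeta$ in the span. The same argument applies verbatim to $\SMp \to \LMp$, yielding three generators for $n\geq 2$ and two for $n=1$.

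The main obstacle is precisely this last bookkeeping step: one must verify that some word in the chosen lifts $f_i$ maps to $1$ in $\LM$ and equals a generator $\zeta^a$ of $\left< \zeta \right>$ with $\gcd(a,k)=1$, so that all of $\left< \zeta \right>$ is recovered. This requires tracking the relations in the presentation of $\LM$ from~\cite{Hirose-Omori} and checking what central element each relator lifts to in $\SM$ — a computation that is routine given the explicit presentations but is the crux of the argument. For the small case $n=1$ one additionally checks that the two lifted generators already suffice, i.e.\ that the single relator needed to kill the extra $\mathbb{Z}/2$ or $\mathbb{Z}/k$ is available; this is where the bound ``two'' rather than ``three'' comes from, matching the $n=1$ exception in Theorem~\ref{thm_lmod}. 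Finally I would note that since $\SMp$ and $\SMb$ are non-abelian (indeed non-cyclic) for all $n\geq 1$, two is a lower bound, so the generating sets are minimal in those cases.
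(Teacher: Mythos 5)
Your overall strategy is the same as the paper's: use the Birman--Hilden correspondence to transfer the boundary case directly ($\SMb\cong\LMb$ via $\theta^1$), and treat $\SM$ via the central extension $1\to\left<\zeta\right>\to\SM\to\LM\to1$, lifting the three generators of Theorem~\ref{thm_lmod} and then showing that $\zeta$ already lies in the subgroup they generate. (One small difference: the paper gets $\SMp$ for free from $\SMb$ via the capping surjection $\widetilde{\iota}_\ast(\SMb)=\SMp$, rather than re-running the central-extension argument for $\SMp\to\LMp$ as you propose; your route would force you to absorb $\zeta$ a second time.)

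The genuine gap is that the step you defer as ``routine bookkeeping'' is in fact the substantive content of the proof, and nothing in your sketch produces the required word. Concretely, the paper exhibits $\zeta$ explicitly as a word in the lifted generators: it introduces the rotation $\zeta'$ of $\Sigma_g^1$ fixing $\widetilde{D}$ pointwise, observes that $\zeta'$ is a lift of $t_{\partial D}=t_{1,2n+1}$ (which is trivial in $\LM$, so this is exactly your ``relator lifting to $\zeta^a$'', with $a=1$), uses the chain-type relation (4)(a) of the Hirose--Omori presentations to write $\zeta'$ as a word in the $\widetilde{t}_{i,i+1}$ and $\widetilde{h}_i$ in $\SMb$, and then applies the capping homomorphism to get $\zeta$ as that word in $\SM$. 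Even then one is not done: the word involves $\widetilde{h}_i$ and $\widetilde{t}_{i,i+1}$ for all $i$, so one must prove the conjugation relations $\widetilde{r}_1\widetilde{t}_{i,i+1}\widetilde{r}_1^{-1}=\widetilde{t}_{i+1,i+2}$ and $\widetilde{r}_1\widetilde{h}_i\widetilde{r}_1^{-1}=\widetilde{h}_{i+1}$ (Lemmas~\ref{lem_conj_r_1_t} and \ref{lem_conj_r_1_h}) to reduce everything to $\widetilde{h}_1$, $\widetilde{t}_{1,2}$, $\widetilde{r}_1$. The second of these is not automatic: $\widetilde{r}_1\widetilde{h}_i\widetilde{r}_1^{-1}$ is a priori only a product of twists along the correct set of curves in some order (equivalently, a lift of $h_{i+1}$ only up to a deck transformation and up to the ordering of the factors), and the paper has to rule out the wrong orderings by a commutation-with-$\zeta$ argument using the chain structure of the curves $\gamma_i^l$. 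Your alternative device of replacing a lift $f_1$ by $f_1\zeta^a$ does not by itself help either; what is needed is precisely the verification that some word in the chosen lifts equals a generator of $\left<\zeta\right>$, and that verification is the crux you have not supplied.
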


Theorems~\ref{thm_lmod} and \ref{thm_smod} are proved in Sections~\ref{section_lmod} and \ref{section_smod}. 
Theorems~\ref{thm_lmod} and \ref{thm_smod} for $n=1$ is immediately obtained from Corollaries~5.4 and 6.13 in~\cite{Hirose-Omori}. 
The integral first homology group $H_1(G)$ of a group $G$ is isomorphic to the abelianization of $G$. 
By Theorem~1.1 in \cite{Ghaswala-Winarski1-1} and Theorems~1.1 and 1.2 in~\cite{Hirose-Omori}, the integral first homology groups of the liftable mapping class groups and the balanced superelliptic mapping class groups are as follows: 

\begin{thm}[Theorem~1.1 in \cite{Ghaswala-Winarski1-1} and Theorem~1.1 in~\cite{Hirose-Omori}]\label{thm_abel_lmod}
For $n\geq 1$,
\begin{enumerate}
\item $H_1(\LM )\cong \left\{ \begin{array}{ll}
 \Z \oplus \Z _2\oplus \Z _{2}&\text{if }  n \text{ is odd},   \\
 \Z \oplus \Z _{2}&\text{if }  n \text{ is even},
 \end{array} \right.$\\
\item $H_1(\LMp )\cong \left\{ \begin{array}{ll}
 \Z \oplus \Z _2&\text{if }  n=1,   \\
 \Z ^2\oplus \Z _{n}&\text{if }  n\geq 2 \text{ is even},\\
 \Z ^2\oplus \Z _{2n}&\text{if }  n\geq 3 \text{ is odd},
 \end{array} \right.$\\
\item $H_1(\LMb )\cong \left\{ \begin{array}{ll}
 \Z ^2&\text{if }  n=1,   \\
 \Z ^3&\text{if }  n\geq 2.
 \end{array} \right.$\\
\end{enumerate}
\end{thm}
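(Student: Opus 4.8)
The plan is to derive each of (1)--(3) from an explicit finite presentation of the corresponding group, using the elementary fact that $H_1(G)$ is the abelianization of $G$: once $G=\langle x_1,\dots ,x_m\mid r_1,\dots ,r_\ell\rangle$ is known, one has $H_1(G)\cong \Z^m/\langle \overline{r_1},\dots ,\overline{r_\ell}\rangle$, where $\overline{r_j}\in \Z^m$ is the vector of exponent sums of the generators in $r_j$, and the isomorphism type of the right-hand side is read off from the Smith normal form of the relation matrix $R=(\overline{r_j})_{j}$. For $\LM$ one uses the presentation of Ghaswala--Winarski, and for $\LMp$ and $\LMb$ the presentations of Hirose and the author (all of which are recalled in the references cited above). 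Since these presentations are in terms of (conjugates of) Dehn twists and branch-point half-twists adapted to the $2n+2$ marked points, the generating set and the relation set are indexed by $n$, so the computation has to be organized as a family.

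I would begin with $\LMb$, whose answer is torsion-free ($\Z^2$ for $n=1$, $\Z^3$ for $n\ge 2$), since this is the cleanest case: it suffices to show that after abelianization all but two (resp. three) of the generators can be eliminated using relators of the shape ``one generator $=$ a word in the surviving generators'', and that the remaining abelianized relators then become trivial, so that $R$ has the expected rank and all its nonzero invariant factors equal $1$. Then I would treat $\LMp$: the same elimination leaves a free part of rank two together with one extra generator, say $y$, subject after abelianization only to a relation of the form $y^{\,n}=1$ when $n$ is even and $y^{\,2n}=1$ when $n$ is odd; this relation is a lantern- or chain-type identity expressing a power of $y$ (equivalently, the class of a boundary/branch-point twist) as a trivial element, and its precise exponent is exactly what distinguishes the two parity cases and produces the cyclic factor $\Z_n$ or $\Z_{2n}$. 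Finally, for $\LM$ the closed-surface presentation abelianizes to a single $\Z$ coming from one infinite-order Dehn-twist generator, plus $\Z_2\oplus \Z_2$ or $\Z_2$ coming from the relators that force the half-twist generators to be $2$-torsion modulo commutators, with the usual parity dependence in how many independent such $2$-torsion classes survive.

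As a consistency check and, in practice, a shortcut, one can relate the three groups directly: capping the boundary $\partial D$ by a once-marked disk gives a central extension $1\to \Z\to \LMb\to \LMp\to 1$ with kernel generated by the boundary Dehn twist $T_{\partial D}$, so the associated five-term exact sequence yields $\Z\cdot [T_{\partial D}]\to H_1(\LMb)\to H_1(\LMp)\to 0$; comparing with the torsion-free computation above, the isomorphism for $\LMp$ follows once one identifies the class $[T_{\partial D}]\in H_1(\LMb)\cong \Z^3$, which is where $n$ and its parity re-enter, again through a twist relation. A further Birman-type exact sequence relating $\LMp$ and $\LM$ then recovers (1) from (2). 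Either way, \textbf{the main difficulty is bookkeeping rather than conceptual}: one must carry out the row/column (Smith) reduction uniformly in $n$, keeping careful track of the handful of relators whose abelianized form genuinely depends on the parity of $n$ --- these are precisely the ones responsible for $\Z_2\oplus\Z_2$ versus $\Z_2$ and for $\Z_n$ versus $\Z_{2n}$. In the body of the paper, since the statement is quoted verbatim, it suffices to invoke \cite{Ghaswala-Winarski1-1} and \cite{Hirose-Omori}.
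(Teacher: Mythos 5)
This statement is not proved in the paper at all: it is imported verbatim from Theorem~1.1 of \cite{Ghaswala-Winarski1-1} and Theorem~1.1 of \cite{Hirose-Omori}, so your concluding remark that it suffices to invoke those two references is exactly what the paper does. Your sketch of how those references obtain the result (abelianizing their explicit finite presentations and reading off the Smith normal form, with the few parity-dependent relators accounting for $\Z _2\oplus \Z _2$ versus $\Z _2$ and for $\Z _n$ versus $\Z _{2n}$) matches the method actually used there, so this is essentially the same approach; only your optional shortcut via a ``Birman-type exact sequence relating $\LMp $ and $\LM $'' should be treated with care, since $\LMp $ sits inside $\LM $ as a finite-index point stabilizer rather than as the target of a point-forgetting quotient, but nothing in your main route depends on it.
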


\begin{thm}[Theorem~1.2 in~\cite{Hirose-Omori}]\label{thm_abel_smod}
For $n\geq 1$ and $k\geq 3$ with $g=n(k-1)$,
\begin{enumerate}
\item $H_1(\SM )\cong \left\{ \begin{array}{ll}
 \Z \oplus \Z _2\oplus \Z _2&\text{if }  n\geq 1\text{ is odd and }k\text{ is odd},   \\
 \Z \oplus \Z _2\oplus \Z _4&\text{if }  n\geq 1\text{ is odd and }k\text{ is even},   \\ 
 \Z \oplus \Z _2 &\text{if }  n\geq 2\text{ is even},
 \end{array} \right.$\\
\item $H_1(\SMp )\cong \left\{ \begin{array}{ll}
 \Z \oplus \Z _{2k}&\text{if }  n=1,   \\
 \Z ^2\oplus \Z _{kn}&\text{if }  n\geq 2 \text{ is even},\\
 \Z ^2\oplus \Z _{2kn}&\text{if }  n\geq 3 \text{ is odd},
 \end{array} \right.$\\
\item $H_1(\SMb )\cong \left\{ \begin{array}{ll}
 \Z ^2&\text{if }  n=1,   \\
 \Z ^3&\text{if }  n\geq 2.
 \end{array} \right.$\\
\end{enumerate}
\end{thm}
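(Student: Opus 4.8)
The plan is to read off all three abelianizations from explicit finite presentations. As recalled in the introduction, Hirose and the author~\cite{Hirose-Omori} produced finite presentations of $\SM $, $\SMp $, and $\SMb $; once a presentation $\langle x_1,\dots ,x_r\mid w_1,\dots ,w_s\rangle $ is fixed, $H_1$ of the group is the cokernel of the relation matrix $M=(\varepsilon _j(w_i))$, where $\varepsilon _j(w_i)$ denotes the exponent sum of the generator $x_j$ in the relator $w_i$. Reducing $M$ to Smith normal form, with nonzero elementary divisors $d_1\mid d_2\mid \cdots \mid d_t$, yields $H_1\cong \Z ^{\,r-t}\oplus \bigoplus _i\Z /d_i\Z $ after discarding the $d_i=1$. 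So the whole statement becomes an exercise in reducing these matrices, and the case distinctions on the parities of $n$ and of $k$ will surface as distinct values of the relevant minors.

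Part~(3) needs no computation at all: the Birman--Hilden theorem quoted above gives $\SMb \cong \LMb $, hence $H_1(\SMb )\cong H_1(\LMb )$, and the latter equals $\Z ^2$ for $n=1$ and $\Z ^3$ for $n\geq 2$ by Theorem~\ref{thm_abel_lmod}(3), which is exactly the asserted value.

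For parts~(1) and~(2) I would carry out the matrix reduction for the presentations of $\SM $ and $\SMp $, but it is cleanest to organize the bookkeeping through the central extensions
\[
1\to \langle \zeta \rangle \to \SM \to \LM \to 1,
\qquad
1\to \langle \zeta \rangle \to \SMp \to \LMp \to 1
\]
coming from Birman--Hilden, with $\langle \zeta \rangle \cong \Z _k$. The five-term exact sequence in group homology for the first one reads $H_2(\SM )\to H_2(\LM )\xrightarrow{\,d\,}\Z _k\to H_1(\SM )\to H_1(\LM )\to 0$, and likewise with $\ast $, so $H_1(\SM )$ is an extension of the known group $H_1(\LM )$ of Theorem~\ref{thm_abel_lmod} by $\operatorname{coker}(d)$. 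One then has to (i) determine $\operatorname{im}(d)\subseteq \Z _k$, equivalently the exact power of $\zeta $ picked up when each defining relator of $\LM $ (resp.\ $\LMp $) is lifted to a symmetric homeomorphism — this is precisely where $k\bmod 2$ enters, in analogy with the hyperelliptic presentation of Birman--Hilden~\cite{Birman-Hilden1} — and (ii) check that the resulting extension of finitely generated abelian groups splits, which one does by exhibiting, for each $\Z $-summand of $H_1(\LM )$, a symmetric lift of infinite order, together with a torsion element realizing $\operatorname{coker}(d)$; the latter recombines with the torsion already present in $H_1(\LM )$ to produce the stated $\Z _2\oplus \Z _2$, $\Z _2\oplus \Z _4$, $\Z _{kn}$, $\Z _{2kn}$, and so on.

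I expect the main obstacle to be step~(i): pinning down $\operatorname{im}(d)$, i.e.\ the winding of lifted relators around $\langle \zeta \rangle $, correctly and uniformly across the parity cases; at the level of the presentations of~\cite{Hirose-Omori} this is the same difficulty as controlling the torsion block of the Smith normal form. In practice the safest route is to abelianize the symmetric presentations directly and compute the Smith normal forms in the four cases ($n$ odd or even, $k$ odd or even), using the torsion-freeness of $H_1(\SMb )$ and the $n=1$ specializations as consistency checks.
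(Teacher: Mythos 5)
A preliminary remark: the paper does not prove this statement at all — it is imported verbatim as Theorem~1.2 of \cite{Hirose-Omori} (where it is obtained by abelianizing the explicit finite presentations), so your attempt has to be judged as a reconstruction of that computation. Your part~(3) is correct and is the natural argument: the Birman--Hilden isomorphism $\SMb \cong \LMb $ gives $H_1(\SMb )\cong H_1(\LMb )$, which is $\Z ^2$ or $\Z ^3$ by Theorem~\ref{thm_abel_lmod}~(3).

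For parts~(1) and~(2), however, there are genuine gaps. First, the extension $1\to \left< \zeta \right> \to \SM \to \LM \to 1$ is \emph{not} central for $k\geq 3$: parity-reversing symmetric classes (for instance the lift $\widetilde{r}$ of $r$) conjugate $\zeta $ to $\zeta ^{-1}$ — this is precisely why the proof of Lemma~\ref{lem_conj_r_1_h} must argue that the exponent $l^{\prime }$ equals $1$. Hence the five-term sequence has the coinvariants $(\Z _k)_{\LM }\cong \Z _{\gcd (2,k)}$ where you wrote $\Z _k$; only the pointed case is central, because every class in $\LMp $ fixing $p_{2n+2}$ is parity-preserving (and $\SMb \cong \LMb $ involves no extension at all). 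Second, and more seriously, your step~(ii) is false: the relevant abelian extensions do \emph{not} split. For $n\geq 2$ the asserted torsion $\Z _{kn}$ (resp.\ $\Z _{2kn}$) of $H_1(\SMp )$ is a non-split extension of the torsion $\Z _{n}$ (resp.\ $\Z _{2n}$) of $H_1(\LMp )$ by $\Z _k$ whenever $\gcd (k,n)>1$ (e.g.\ $n=k=3$: $\Z _{18}\not \cong \Z _6\oplus \Z _3$), and likewise $\Z \oplus \Z _2\oplus \Z _4$ for odd $n$ and even $k$ is not $H_1(\LM )\oplus \Z _2$. So the route ``exhibit lifts and check splitting'' cannot produce the stated groups; one must determine the extension class, which amounts to expressing the image of $\zeta $ in $H_1$ as a word in the generators (compare the identity writing $\zeta $ as a product of the $\widetilde{t}_{i,i+1}$ and $\widetilde{h}_i$ used in the proof of Proposition~\ref{prop_SM}), or equivalently to carrying out the Smith-normal-form computation on the presentations of \cite{Hirose-Omori} — exactly the step (i) you defer. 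Since the parity-of-$n$ and parity-of-$k$ distinctions arise only in that deferred computation, the proposal as written asserts the answer rather than deriving it.
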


For a group $G$, the minimal number of generators for $H_1(G)$ gives a lower bound of the minimal number of generators for $G$.  
By Theorems~\ref{thm_abel_lmod} and \ref{thm_abel_smod}, we can show that the generating sets in Theorem~\ref{thm_lmod} or \ref{thm_smod} are minimal except for the cases that $\LM $ for even $n$ and $\SM $ for even $n$. 


\section{Preliminaries}\label{Preliminaries}

\subsection{The balanced superelliptic covering space}\label{section_bscov}

In this section, we review the definition of the balanced superelliptic coverring space from Section~2.1 in \cite{Hirose-Omori}. 
For integers $n\geq 1$ and $k\geq 2$ with $g=n(k-1)$, we describe the surface $\Sigma _{g}$ as follows. 
We take the unit 2-sphere $S^2=S(1)$ in $\R ^3$ and $n$ mutually disjoint parallel copies $S(2),\ S(3),\ \dots ,\ S(n+1)$ of $S(1)$ by translations along the x-axis such that 
\[
\max \bigl( S(i)\cap (\R \times \{ 0\}\times \{ 0\} )\bigr) <\min \bigl( S(i+1)\cap (\R \times \{ 0\}\times \{ 0\} )\bigr)
\]
for $1\leq i\leq n$ (see Figure~\ref{fig_bs_periodic_map}). 
Let $\zeta$ be the $(-\frac{2\pi }{k})$-rotation of $\R ^3$ on the $x$-axis. 
Then we remove $2k$ disjoint open disks in $S(i)$ for $2\leq i\leq n$ and $k$ disjoint open disks in $S(i)$ for $i\in \{ 1,\ n+1\}$ which are setwisely preserved by the action of $\zeta $, and connect $k$ boundary components of the punctured $S(i)$ and ones of the punctured $S(i+1)$ by $k$ annuli such that the union of the $k$ annuli is preserved by the action of $\zeta $ for each $1\leq i\leq n$ as in Figure~\ref{fig_bs_periodic_map}. 
Since the union of the punctured $S(1)\cup S(2)\cup \cdots \cup S(n+1)$ and the attached $n\times k$ annuli is homeomorphic to $\Sigma _{g=n(k-1)}$, 
we regard this union as $\Sigma _g$. 

By the construction above, the action of $\zeta $ on $\R ^3$ induces the action on $\Sigma _g$, the quotient space $\Sigma _g/\left< \zeta \right>$ is homeomorphic to $\Sigma _0$, and the quotient map $p=p_{g,k}\colon \Sigma _g\to \Sigma _0$ is a branched covering map with $2n+2$ branch points in $\Sigma _0$. 
We call the branched covering map $p\colon \Sigma _g\to \Sigma _0$ the \textit{balanced superelliptic covering map}. 
Denote by $\widetilde{p}_1,\ \widetilde{p}_2,\ \dots ,\ \widetilde{p}_{2n+2}\in \Sigma _g$ the fixed points of $\zeta $ such that $\widetilde{p}_i<\widetilde{p}_{i+1}$ in $\R =\R \times \{ 0\} \times \{ 0\}$ for $1\leq i\leq 2n+1$, by $p_i\in \Sigma _0$ for $1\leq i\leq 2n+2$ the image of $\widetilde{p}_i$ by $p$ (i.e. $p_1,\ p_2,\ \dots ,\ p_{2n+2}\in \Sigma _0$ are branch points of $p$), and by $\B $ the set of the branch points $p_1,\ p_2,\ \dots ,\ p_{2n+2}$. 
Let $D$ be a 2-disk in $(\Sigma _0-\B )\cup \{ p_{2n+2}\}$ whose interior includes the point $p_{2n+2}$ and $\widetilde{D}$ the preimage of $D$ with respect to $p$. 
Denote by $\Sigma _g^1$ the complement of interior of $\widetilde{D}$ in $\Sigma _g$ and by $\Sigma _0^1$ the complement of interior of $D$ in $\Sigma _0$. 
Then we also call the restriction $p|_{\Sigma _g^1}\colon \Sigma _g^1\to \Sigma _0^1$ the balanced superelliptic covering map and we denote simply $p|_{\Sigma _g^1}=p$. 

\begin{figure}[h]
\includegraphics[scale=1.5]{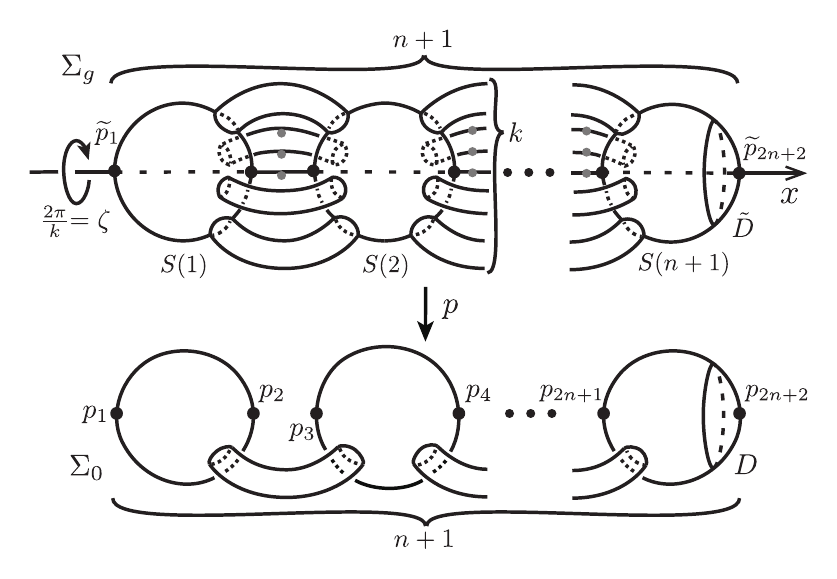}
\caption{The balanced superelliptic covering map $p=p_{g,k}\colon \Sigma _g\to \Sigma _0$.}\label{fig_bs_periodic_map}
\end{figure}

\subsection{Liftable elements for the balanced superelliptic covering map}\label{section_liftable-element}

In this section, we introduce some liftable homeomorphisms on $\Sigma _0$ for the balanced superelliptic covering map $p=p_{g,k}$ for $k\geq 3$ from Section~2.2 in \cite{Hirose-Omori}. 
Let $l_i$ $(1\leq i\leq 2n+1)$ be an oriented simple arc on $\Sigma _0$ whose endpoints are $p_i$ and $p_{i+1}$ as in Figure~\ref{fig_path_l}. 
Put $L=l_1\cup l_2\cup \cdots \cup l_{2n+1}$. 
The isotopy class of a homeomorphism $\varphi $ on $\Sigma _0$ (resp. $\Sigma _0^1$) relative to $\B $ (resp. $(\B -\{ p_{2n+2}\})\cup \partial \Sigma _0^1$) is determined by the isotopy class of the image of $L$ (resp. $L\cap \Sigma _0^1$) by $\varphi $ relative to  $\B $ (resp. $(\B -\{ p_{2n+2}\})\cup \partial \Sigma _0^1$). 
We identify $\Sigma _0$ with the surface on the lower side in Figure~\ref{fig_path_l} by some homeomorphism of $\Sigma _0$. 

\begin{figure}[h]
\includegraphics[scale=1.5]{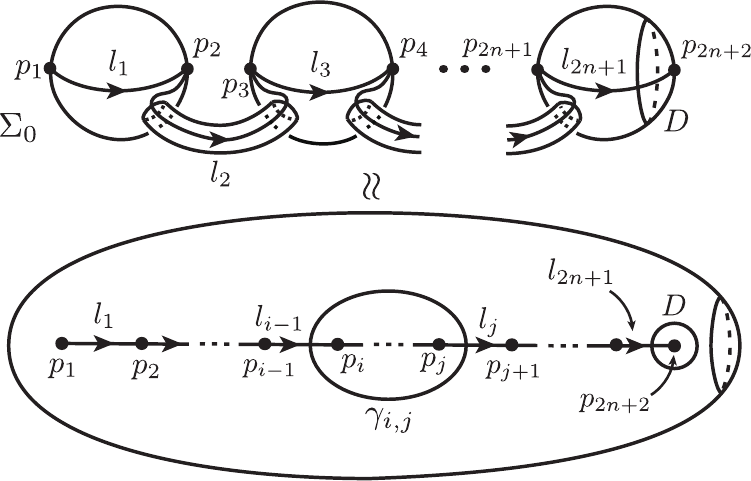}
\caption{A natural homeomorphism of $\Sigma _0$, arcs $l_1,\ l_2,\ \dots ,\ l_{2n+1}$, and a simple closed curve $\gamma _{i,j}$ on $\Sigma _0$ for $1\leq i<j\leq 2n+2$.}\label{fig_path_l}
\end{figure}

Let $\sigma _i$ for $1\leq i\leq 2n+1$ be a self-homeomorphism on $\Sigma _0$ which is described as the result of anticlockwise half-rotation of $l_i$ in the regular neighborhood of $l_i$ in $\Sigma _0$ as in Figure~\ref{fig_sigma_l}. 
$\sigma _i$ is called the \textit{half-twist} along $l_i$. 
As a well-known result, $\M $ is generated by $\sigma _1$, $\sigma _2,\ \dots $, $\sigma _{2n+1}$ (see for instance Section~9.1.4 in \cite{Farb-Margalit}). 
Since $\M $ is naturally acts on $\B $, we have the surjective homomorphism 
\[
\Psi \colon \M \to S_{2n+2}
\]
given by $\Psi (\sigma _i)=(i\ i+1)$, where $S_{2n+2}$ is the symmetric group of degree $2n+2$. 

\begin{figure}[h]
\includegraphics[scale=1.1]{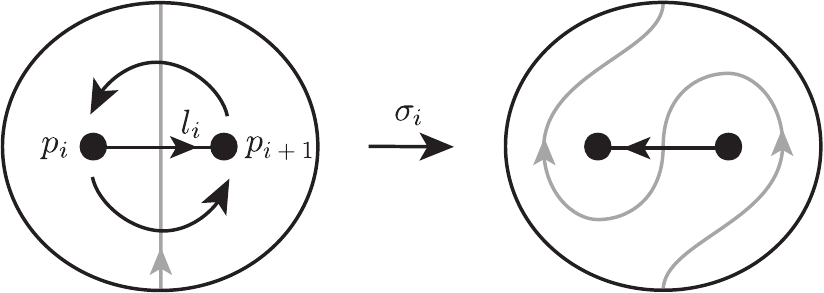}
\caption{The half-twist $\sigma _i$ for $1\leq i\leq 2n+1$.}\label{fig_sigma_l}
\end{figure}

Put $\B _o=\{ p_1,\ p_3,\ \dots ,\ p_{2n+1}\}$ and $\B _e=\{ p_2,\ p_4,\ \dots ,\ p_{2n+2}\}$. 
An element $\sigma $ in $S_{2n+2}$ is \textit{parity-preserving} if $\sigma (\B _o)=\B _o$, and is \textit{parity-reversing} if $\sigma (\B _o)=\B _e$. 
An element $f$ in $\M $ is \textit{parity-preserving} (resp. \textit{parity-reversing}) if $\Psi (f)$ is \textit{parity-preserving} (resp. \textit{parity-reversing}). 
Let $W_{2n+2}$ be the subgroup of $S_{2n+2}$ which consists of parity-preserving or parity-reversing elements, $S_{n+1}^o$ (resp. $S_{n+1}^e$) the subgroup of $S_{2n+2}$ which consists of elements whose restriction to $\B _e$ (resp. $\B _o$) is the identity map.  
Note that $S_{n+1}^o$ (resp. $S_{n+1}^e$) is a subgroup of $W_{2n+2}$, isomorphic to $S_{n+1}$, and generated by transpositions $(1\ 3)$, $(3\ 5),\ \dots $, $(2n-1\ 2n+1)$ (resp. $(2\ 4)$, $(4\ 6),\ \dots $, $(2n\ 2n+2)$). 
Then we have the following exact sequence:
\begin{eqnarray}\label{exact1}
1\longrightarrow S_{n+1}^o\times S_{n+1}^e\longrightarrow W_{2n+2}\stackrel{\pi }{\longrightarrow }\Z _2\longrightarrow 1, 
\end{eqnarray}
where the homomorphism $\pi \colon W_{2n+2}\to \Z _2$ is defined by $\pi (\sigma )=0$ if $\sigma $ is parity-preserving and $\pi (\sigma )=1$ if $\sigma $ is parity-reversing. 
Ghaswala and Winarski~\cite{Ghaswala-Winarski1} proved the following lemma. 
\begin{lem}[Lemma~3.6 in \cite{Ghaswala-Winarski1}]\label{lem_GW}
Let $\LM $ be the liftable mapping class group for the balanced superelliptic covering map $p_{g,k}$ for $n\geq 1$ and $k\geq3$ with $g=n(k-1)$. 
Then we have
\[
\LM =\Psi ^{-1}(W_{2n+2}).
\]
\end{lem}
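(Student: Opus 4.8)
The plan is to read off $\LM $ from the classical lifting criterion for covering spaces, applied to the unbranched regular covering obtained by deleting the branch points, and then to pass to homology since the deck transformation group $\langle \zeta _{g,k}\rangle \cong \Z _k$ is abelian. By the construction recalled in Section~\ref{section_bscov}, removing the $\widetilde{p}_i$ turns $p$ into a regular $\Z _k$-covering of $\Sigma _0-\B $, classified by a surjection $\phi \colon \pi _1(\Sigma _0-\B )\to \Z _k$ that sends the positively oriented loop $x_i$ around $p_i$ to $(-1)^{i+1}\in \Z _k$; this alternating pattern on $p_1,\dots ,p_{2n+2}$ is exactly the ``balanced'' condition that defines $\zeta _{g,k}$, and here $k\geq 3$. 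A self-homeomorphism $\varphi $ of $\Sigma _0$ preserving $\B $ setwise lifts to $\Sigma _g$ if and only if $\varphi |_{\Sigma _0-\B }$ lifts through $p$ --- near a branch point $p$ has the cyclic cone model $z\mapsto z^k$, so a lift over the punctured surface always extends over the $\widetilde{p}_i$, and a lift over $\Sigma _g$ restricts to one over $\Sigma _g-\{ \widetilde{p}_i\} $ --- and by the lifting criterion for the regular cover associated to $N:=\ker \phi \trianglelefteq \pi _1(\Sigma _0-\B )$ this happens precisely when $\varphi _{\ast }(N)=N$.

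Now I would abelianize. Since $\Z _k$ is abelian, $N$ contains the commutator subgroup, so $\phi $ factors through $H_1(\Sigma _0-\B )\cong \Z ^{2n+2}/\langle x_1+\dots +x_{2n+2}\rangle $ as a surjection $\bar \phi $ with $\bar \phi (x_i)=(-1)^{i+1}$, and $\varphi _{\ast }(N)=N$ becomes $\bar \varphi _{\ast }(\ker \bar \phi )=\ker \bar \phi $. As $\bar \varphi _{\ast }$ is an automorphism of $H_1(\Sigma _0-\B )$, and two surjections onto $\Z _k$ have the same kernel exactly when they differ by a unit of $\Z _k$, this is equivalent to $\bar \phi \circ \bar \varphi _{\ast }=u\cdot \bar \phi $ for some $u\in \Z _k^{\times }$. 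Because $\varphi $ is orientation-preserving and maps $p_i$ to $p_{\Psi (\varphi )(i)}$, it carries a positively oriented loop around $p_i$ to one around $p_{\Psi (\varphi )(i)}$, so $\bar \varphi _{\ast }$ is nothing but the permutation of the basis $\{ x_i\} $ induced by $\rho :=\Psi (\varphi )$. Hence liftability of $\varphi $ is equivalent to the existence of $u\in \Z _k^{\times }$ with $(-1)^{\rho (i)+1}=u\,(-1)^{i+1}$ for all $i$.

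It remains to unwind this condition. Evaluating it at an odd index $i$ forces $u=(-1)^{\rho (i)+1}\in \{ 1,-1\} $, both of which are units in $\Z _k$; and since for $k\geq 3$ the numbers $(-1)^{i+1}$ realize both values $1$ and $-1$ in $\Z _k$, consistency of the system over all $i$ together with bijectivity of $\rho $ forces either $\rho (i)\equiv i\pmod 2$ for every $i$ (when $u=1$) or $\rho (i)\equiv i+1\pmod 2$ for every $i$ (when $u=-1$). These are precisely the conditions that $\rho $ be parity-preserving or parity-reversing, i.e. $\Psi (\varphi )\in W_{2n+2}$; conversely a parity-preserving (resp. parity-reversing) $\Psi (\varphi )$ satisfies the system with $u=1$ (resp. $u=-1$) and is therefore liftable. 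Hence $\varphi \in \LM $ if and only if $\Psi (\varphi )\in W_{2n+2}$, which is the asserted equality $\LM =\Psi ^{-1}(W_{2n+2})$.

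The step I expect to be the main obstacle is the reduction in the first paragraph: justifying carefully that liftability of $\varphi $ on the closed sphere $\Sigma _0$ is genuinely equivalent to the algebraic condition $\varphi _{\ast }(\ker \phi )=\ker \phi $ on $\pi _1(\Sigma _0-\B )$ --- handling the branch points and the (harmless, because the cover is regular) base-point ambiguity in the lifting criterion --- and pinning down the defining homomorphism $\phi $, with its alternating values, from the explicit description of $\zeta _{g,k}$ in Section~\ref{section_bscov}. Once this is in place the rest is the short homological computation above, whose only mildly delicate input is the fact that two surjections $H_1(\Sigma _0-\B )\to \Z _k$ with the same kernel differ by a unit of $\Z _k$.
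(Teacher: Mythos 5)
Your argument is correct, but note that the paper itself contains no proof of this statement: Lemma~\ref{lem_GW} is imported verbatim as Lemma~3.6 of Ghaswala--Winarski \cite{Ghaswala-Winarski1}, so the comparison is really with that source rather than with anything internal to this paper. What you wrote is essentially the proof given there: one passes to the unbranched regular $\Z_k$-cover of $\Sigma_0-\B$, whose defining surjection $\phi\colon\pi_1(\Sigma_0-\B)\to\Z_k$ takes the alternating values $\pm 1$ on the loops $x_i$ (this is the ``balanced'' condition), invokes the lifting criterion together with the observation that lifts extend over the branch points via the local model $z\mapsto z^k$, abelianizes because the deck group is abelian (this homological reformulation is exactly the mechanism behind the curve-lifting criterion the paper quotes in Section~\ref{section_lifts}), and then observes that preserving $\ker\bar\phi$ means $\bar\phi\circ\bar\varphi_\ast=u\bar\phi$ for a unit $u$, which evaluation on the $x_i$ forces into $u\in\{1,-1\}$ and hence, since $1\neq -1$ in $\Z_k$ for $k\geq 3$, into the parity-preserving or parity-reversing dichotomy defining $W_{2n+2}$. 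Your two flagged delicate points (extension of lifts over the punctures, and the basepoint ambiguity, which is harmless because the cover is regular so $\ker\phi$ is normal) are handled correctly, and your use of $k\geq 3$ is exactly where the argument diverges from the hyperelliptic case $k=2$, where $u=1=-1$ and every permutation is allowed, consistent with $\mathrm{LMod}_{2n+2;2}=\M$.
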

Lemma~\ref{lem_GW} implies that a mapping class $f\in \M $ lifts with respect to $p_{g,k}$ if and only if $f$ is parity-preserving or parity-reversing (in particular, when $k\geq 3$, the liftability of a homeomorphism on $\Sigma _0$ or $\Sigma _0^1$ does not depend on $k$). 
The \textit{pure mapping class group} $\PM $ is the kernel of the homomorphism $\Psi \colon \M \to S_{2n+2}$. 
Since all elements in $\PM $ is parity-preserving, $\PM $ is a subgroup of $\LM $ and we have the following exact sequence:  
\begin{eqnarray}\label{exact2}
1\longrightarrow \PM \longrightarrow \LM \stackrel{\Psi }{\longrightarrow }W_{2n+2}\longrightarrow 1. 
\end{eqnarray}
For mapping classes $[\varphi ]$ and $[\psi ]$, $[\psi ][\varphi ]$ means $[\psi \circ \varphi ]$, i.e. $[\varphi ]$ apply first, and we often abuse notation and denote a homeomorphism and its isotopy class by the same symbol.  
We will introduce some explicit liftable elements as follows. 
\\

\if0
\paragraph{\emph{The Dehn twist $t_{i,i+1}$}}

For a simple closed curve $\gamma $ on $\Sigma _g$ $(g\geq 0)$, we denote by $t_\gamma $ the right-handed Dehn twist along $\gamma $. 
Let $\gamma _{i,i+1}$ for $1\leq i\leq 2n+1$ be a simple closed curve on $\Sigma _0-(\B \cup D)$ such that $\gamma _{i,i+1}$ surrounds the two points $p_i$ and $p_{i+1}$ as in Figure~\ref{fig_path_l}. 
Then we define $t_{i,i+1}=t_{\gamma _{i,i+1}}$ for $1\leq i\leq 2n+1$. 
Since the Dehn twist $t_{i,i+1}$ preserves $\B$ pointwise, i.e. $t_{i,j}$ lies in $\PM $, $t_{i,i+1}$ lifts with respect to $p$ by Lemma~\ref{lem_GW}. 
\fi

\paragraph{\emph{The Dehn twist $t_{i,j}$}}

For a simple closed curve $\gamma $ on $\Sigma _g$ $(g\geq 0)$, we denote by $t_\gamma $ the right-handed Dehn twist along $\gamma $. 
Let $\gamma _{i,j}$ for $1\leq i<j\leq 2n+2$ be a simple closed curve on $\Sigma _0-(\B \cup D)$ such that $\gamma _{i,j}$ surrounds the $j-i+1$ points $p_i$, $p_{i+1},\ \dots $,~$p_j$ as in Figure~\ref{fig_path_l}. 
We have $\gamma _{i,2n+2}=\gamma _{1,i-1}$ for $i\geq 3$ as an isotopy class of a curve. 
Then we define $t_{i,j}=t_{\gamma _{i,j}}$ for $1\leq i<j\leq 2n+2$. 
Note that $t_{1,2n+1}=t_{1,2n+2}=t_{2,2n+2}=1$ and $t_{i,2n+2}=t_{1,i-1}$ for $i\geq 3$ in $\M $. 
Since the Dehn twist $t_{i,j}$ preserves $\B$ pointwise, i.e. $t_{i,j}$ lies in $\PM $, $t_{i,j}$ lifts with respect to $p$ by Lemma~\ref{lem_GW}. \\

\paragraph{\emph{The half-rotation $h_i$}}

Let $\mathcal{N}$ be a regular neighborhood of $l_i\cup l_{i+1}$ in $(\Sigma _0-\B )\cup \{ p_{i},\ p_{i+1},\ p_{i+2}\}$ for $1\leq i\leq 2n$. 
$\mathcal{N}$ is homeomorphic to a 2-disk. 
Then we denote by $h_i$ the self-homeomorphism on $\Sigma _0$ which is described as the result of anticlockwise half-rotation of $l_i\cup l_{i+1}$ in $\mathcal{N}$ as in Figure~\ref{fig_h_i}. 
Note that $h_i=\sigma _i\sigma _{i+1}\sigma _i$ for $1\leq i\leq 2n$. 
Since $\Psi (h_i)=(i\ i+2)$ for $1\leq i\leq 2n$, the mapping class $h_i$ is parity-preserving. 
Thus, by Lemma~\ref{lem_GW}, $h_i$ lifts with respect to $p$. 

\begin{figure}[h]
\includegraphics[scale=0.95]{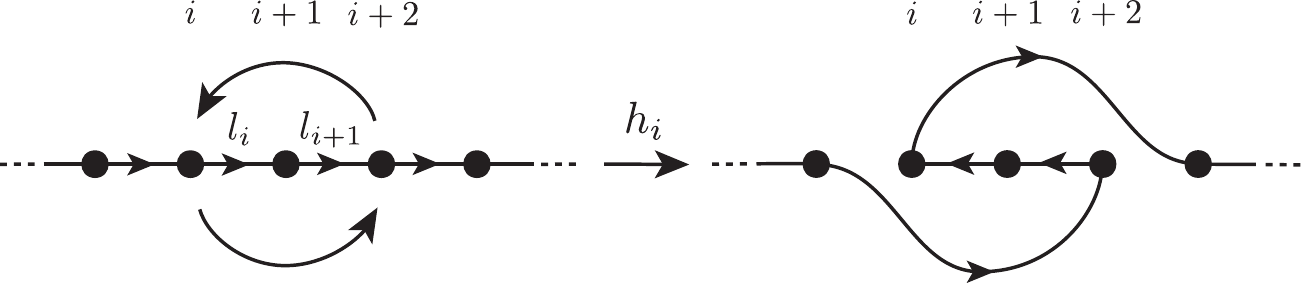}
\caption{The mapping class $h_i$ for $1\leq i\leq 2n$ on $\Sigma _0$.}\label{fig_h_i}
\end{figure}

\paragraph{\emph{The parity-reversing element $r_1$}}

Let $r_1$ be the self-homeomorphism on $\Sigma _0$ which is described as the result of the $\frac{2\pi }{2n+2}$-rotation on $\Sigma _0$ as in Figure~\ref{fig_reverse_r_1}. 
We see that $r(l_{i})=l_{i+1}$ for $1\leq i\leq 2n$, the order of $r_1$ is $2n+2$, and $\Psi (r_1)=(1\ 2)(2\ 3)\cdots (2n+1\ 2n+2)$. 
Thus $r_1$ is parity-reversing and lifts with respect to $p$ by Lemma~\ref{lem_GW}. \\

\begin{figure}[h]
\includegraphics[scale=1.3]{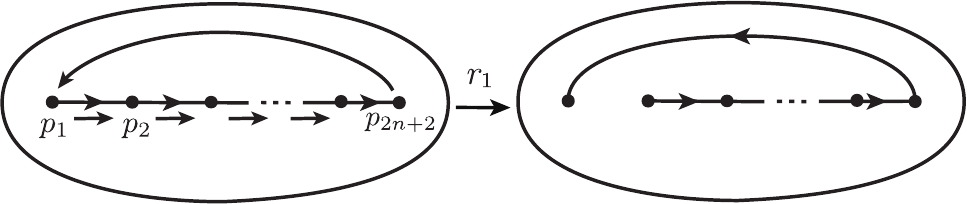}
\caption{The mapping class $r_1$ on $\Sigma _0$.}\label{fig_reverse_r_1}
\end{figure}

\paragraph{\emph{The parity-reversing element $r$}}

Let $r$ be the self-homeomorphism on $\Sigma _0$ which is described as the result of the $\pi $-rotation of $\Sigma _0$ on the axis as in Figure~\ref{fig_r}. 
We see that $r(l_{i})=l_{2n+2-i}^{-1}$ for $1\leq i\leq 2n+1$ and $\Psi (r)=(1\ 2n+2)(2\ 2n+1)\cdots (n+1\ n+2)$. 
Thus $r$ is parity-reversing and lifts with respect to $p$ by Lemma~\ref{lem_GW}. \\

\begin{figure}[h]
\includegraphics[scale=1.3]{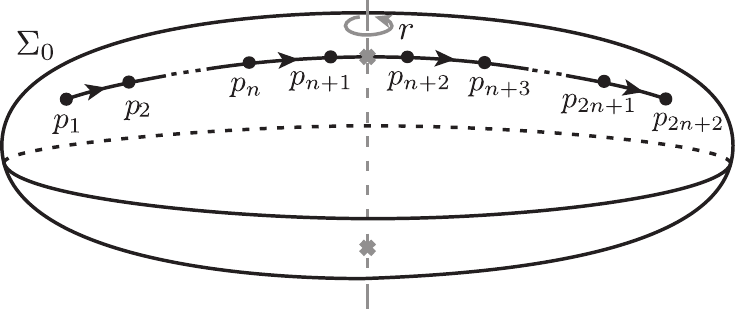}
\caption{The mapping class $r$ on $\Sigma _0$.}\label{fig_r}
\end{figure}

We have the following lemma. 

\begin{lem}\label{lem-r_1-product}
For $n\geq 1$, put 
\[
F=\left\{
		\begin{array}{ll}
		h_1^{-1} \quad \text{for }n=1,\\
		(h_1^{-1}h_2^{-1}\cdots h_{2n-2}^{-1})(h_1^{-1}h_2^{-1}\cdots h_{2n-4}^{-1})\cdots (h_1^{-1}h_2^{-1})h_{2n-1}^{-1}\cdots h_3^{-1}h_1^{-1}\\
		t_{2n,2n+1}^{n-1}\cdots t_{6,7}^2t_{4,5}\quad \text{for }n\geq 2.\\
		\end{array}
		\right.
\]
Then, the relation $r_1=rF$ holds in $\LM $. 
\end{lem}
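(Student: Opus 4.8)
The plan is to verify the identity $r_1=rF$ by comparing the images of the arc system $L=l_1\cup l_2\cup \cdots \cup l_{2n+1}$, using the fact recalled in Section~\ref{section_liftable-element} that a mapping class in $\M$ is determined by the isotopy class, relative to $\B$, of the image of $L$. Since $r$ is realized by a $\pi$-rotation of $\Sigma _0$ we have $r^2=1$, hence $r^{-1}=r$ in $\M$, so the identity $r_1=rF$ is equivalent to $F(L)$ and $(r\circ r_1)(L)$ being isotopic relative to $\B$. From $r_1(l_i)=l_{i+1}$ for $1\le i\le 2n$ and $r(l_j)=l_{2n+2-j}^{-1}$ for $1\le j\le 2n+1$ one gets $(r\circ r_1)(l_i)=l_{2n+1-i}^{-1}$ for $1\le i\le 2n$, while $(r\circ r_1)(l_{2n+1})$ is the $r$-image of the arc $r_1(l_{2n+1})$ joining $p_{2n+2}$ and $p_1$, which joins $p_1$ and $p_{2n+2}$. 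I would therefore first fix, from the pictures in Figures~\ref{fig_reverse_r_1} and~\ref{fig_r}, an explicit model of this target arc system, and reduce the lemma to showing that the prescribed word $F$ carries each $l_i$ onto the corresponding arc of the model.

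For $n=1$ this is a direct check: here $F=h_1^{-1}$, and one compares the action of $h_1$ on the disk $\mathcal N\supset l_1\cup l_2$ of Figure~\ref{fig_h_i} with $(r\circ r_1)(l_1\cup l_2\cup l_3)$, using the permutation identity $\Psi (rF)=\Psi (r)\Psi (h_1)^{-1}=(1\ 2\ 3\ 4)=\Psi (r_1)$ as a consistency check. For $n\ge 2$ I would track the image of $L$ through the factors of $F$ applied in the order $F$ prescribes (rightmost first): first the commuting powers of Dehn twists $t_{4,5}t_{6,7}^2\cdots t_{2n,2n+1}^{n-1}$, each of which fixes $\B $ and only twists arcs along the curves $\gamma _{2j,2j+1}$ $(2\le j\le n)$; then the block $h_1^{-1}h_3^{-1}\cdots h_{2n-1}^{-1}$; then the triangular block $(h_1^{-1}h_2^{-1})(h_1^{-1}h_2^{-1}h_3^{-1}h_4^{-1})\cdots (h_1^{-1}h_2^{-1}\cdots h_{2n-2}^{-1})$; and finally $r$. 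Each factor $h_i^{-1}$ performs the explicit local move on $l_i\cup l_{i+1}$ depicted in Figure~\ref{fig_h_i}, so the whole computation is a finite sequence of local modifications of an arc system, which I would keep under control by recording, for each arc of the current system, which branch points it separates and in what cyclic order, rather than by drawing every intermediate configuration.

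I expect the computation to organize by induction on $n$: the subword of $F$ built only from $h_1,\dots ,h_{2n-4}$ and $t_{4,5},\dots ,t_{2n-2,2n-1}^{n-2}$ is precisely $F$ for $n-1$, so modulo checking that the new factors act compatibly one reduces the case of $2n+2$ branch points to that of $2n$ branch points together with the action of the extra factors $h_1^{-1}h_2^{-1}\cdots h_{2n-2}^{-1}$, $h_{2n-1}^{-1}$ and $t_{2n,2n+1}^{n-1}$ on the two new arcs $l_{2n},l_{2n+1}$ and their neighbours. The main obstacle is exactly this bookkeeping: after the half-rotations are applied the images of the $l_i$ no longer look like the standard arcs but wind around several branch points, and one must verify that the prescribed powers $t_{2j,2j+1}^{j-1}$, together with $r$ at the end, undo precisely this winding, so that $F$ sends $l_i$ to $l_{2n+1-i}^{-1}$ for $1\le i\le 2n$ and $l_{2n+1}$ to the required arc joining $p_1$ and $p_{2n+2}$. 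Showing that the accumulated twisting corrections are exactly those powers is where the genuine content lies; I would exploit the rotational symmetry of the configuration, and of the product of the $h_i$, to cut down the number of cases that must actually be drawn, and use the surjection $\Psi \colon \M \to S_{2n+2}$ (with $\Psi (t_{i,j})=1$ and $\Psi (h_i)=(i\ i+2)$) throughout as a running consistency check, since $\Psi (F)$ must equal the involution fixing $n+1$ and $2n+2$ and sending $j\mapsto 2n+2-j$ for $1\le j\le 2n+1$.
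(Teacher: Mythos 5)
Your reduction is the right one, and it is the same as the paper's: since $r$ is an involution and a mapping class in $\M$ is determined by the isotopy class rel $\B$ of the image of $L$, the relation $r_1=rF$ amounts to checking that $F(L)$ agrees with $rr_1(L)$ (the paper phrases it as $F^{-1}rr_1(l_i)=l_i$ for all $i$), and both you and the paper organize this as an inductive, figure-based computation on the arc system with $\Psi$ as a running consistency check; your identification of the target arcs ($F(l_i)\simeq l_{2n+1-i}^{-1}$ for $1\leq i\leq 2n$, plus one non-standard arc from $p_1$ to $p_{2n+2}$) and your $n=1$ check are correct. The problem is that the write-up stops exactly where the content of the lemma begins: you say yourself that verifying that the powers $t_{2j,2j+1}^{j-1}$ compensate the winding produced by the half-twist blocks is ``where the genuine content lies,'' and no part of the proposal actually carries out that verification for any $n\geq 2$. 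As it stands this is a plan for the computation, not the computation.

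Moreover, the inductive scheme as you state it would not go through. The factors of $F$ that are new at stage $n$ --- the block $h_1^{-1}h_2^{-1}\cdots h_{2n-2}^{-1}$, the factor $h_{2n-1}^{-1}$, and the twist $t_{2n,2n+1}^{n-1}$ --- sit non-contiguously inside the word, and the block $h_1^{-1}\cdots h_{2n-2}^{-1}$ is supported across the entire chain of branch points, not near $l_{2n}\cup l_{2n+1}$; so the step from $n-1$ to $n$ cannot be reduced to ``the action of the extra factors on the two new arcs and their neighbours.'' In addition, applying $F$ in its literal order means the twists $t_{4,5},t_{6,7}^2,\dots,t_{2n,2n+1}^{n-1}$ drag every odd-indexed arc before any half-twist acts, so the intermediate arc systems never contain the stage-$(n-1)$ configuration as a sub-picture. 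The paper sidesteps both issues by reordering first: it applies to $rr_1(L)$ the word $h_1h_3\cdots h_{2n-1}\cdot F_1\cdots F_{n-1}$ with $F_i=t_{2i+1,2i+2}^{-i}h_{2i}\cdots h_2h_1$, so that each block $F_i$ restores one level of the picture (this interleaving is what makes the induction on blocks tractable, see Figure~\ref{fig_proof-lem-r_1}), and only afterwards identifies this word with $F^{-1}$ purely algebraically, using commutation relations together with $h_it_{i,i+1}=t_{i+1,i+2}h_i$ in $\LM$. To complete your argument you must either control genuinely global intermediate pictures in the literal order of $F$, or first perform a reordering of this kind --- an algebraic step your outline does not anticipate.
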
 

\begin{proof}
Since $r$ is an involution and the isotopy class of a homeomorphism $\varphi $ on $\Sigma _0$ relative to $\B $ is determined by the isotopy class of the image $\varphi (L)$ relative to $\B $, we will prove that $F^{-1}rr_1(l_i)=l_i$ for $1\leq i\leq 2n+1$ relative to $\B $. 
The images $r_1(L)$ and $rr_1(L)$ are described as on the second and third steps in Figure~\ref{fig_proof-lem-r_1}. 
When $n=1$, we clearly show that $F^{-1}rr_1(l_i)=h_1rr_1(l_i)=l_i$ for $1\leq i\leq 2n+1$ relative to $\B $. 

Assume that $n\geq 2$. 
Put $F_i=t_{2i+1,2i+2}^{-i}h_{2i}\cdots h_2h_1$ for $1\leq i\leq n-1$. 
By applying $F_1\cdots F_{n-2}F_{n-1}=(t_{3,4}^{-1}h_2h_1)\cdots (t_{2n-3,2n-2}^{-(n-2)}h_{2n-4}\cdots h_2h_1)(t_{2n-1,2n}^{-(n-1)}h_{2n-2}\cdots h_2h_1)$ to $rr_1(L)$ inductively, we have the image $F_1\cdots F_{n-2}F_{n-1}rr_1(L)$ as on the sixth step in Figure~\ref{fig_proof-lem-r_1}.
Finally, by applying $h_1h_3\cdots h_{2n-1}$ to this image, we show that $h_1h_3\cdots h_{2n-1}F_1\cdots F_{n-2}F_{n-1}rr_1(l_i)=l_i$ for $1\leq i\leq 2n+1$ as on the bottom in Figure~\ref{fig_proof-lem-r_1}. 
Since $t_{i,i+1}$ for $1\leq i\leq 2n+1$ commutes with $h_j$ for $j\not \in \{ i-2,\ i-1,\ i,\ i+1\}$ and the relation $h_it_{i,i+1}=t_{i+1,i+2}h_i$ for $1\leq i\leq 2n$ holds in $\LM $, we have
\begin{eqnarray*}
&&h_1h_3\cdots h_{2n-1}F_1\cdots F_{n-2}F_{n-1}\\
&=&h_1h_3\cdots h_{2n-1}\\
&&\cdot (\underset{\leftarrow }{\underline{t_{3,4}^{-1}}}h_2h_1)\cdots (\underset{\leftarrow }{\underline{t_{2n-3,2n-2}^{-(n-2)}}}h_{2n-4}\cdots h_2h_1)(\underset{\leftarrow }{\underline{t_{2n-1,2n}^{-(n-1)}}}h_{2n-2}\cdots h_2h_1)\\
&\overset{\text{COMM}}{\underset{}{=}}&h_1\underline{h_3t_{3,4}^{-1}}\cdots \underline{h_{2n-3}t_{2n-3,2n-2}^{-(n-2)}}\ \underline{h_{2n-1}t_{2n-1,2n}^{-(n-1)}}\\
&&\cdot (h_2h_1)\cdots (h_{2n-4}\cdots h_2h_1)(h_{2n-2}\cdots h_2h_1)\\
&\overset{}{\underset{}{=}}&h_1\underset{\leftarrow }{\underline{t_{4,5}^{-1}}}h_3\cdots h_{2n-5}\underset{\leftarrow }{\underline{t_{2n-2,2n-1}^{-(n-2)}}}h_{2n-3}\underset{\leftarrow }{\underline{t_{2n,2n+1}^{-(n-1)}}}h_{2n-1}\\
&&\cdot (h_2h_1)\cdots (h_{2n-4}\cdots h_2h_1)(h_{2n-2}\cdots h_2h_1)\\
&\overset{\text{COMM}}{\underset{}{=}}&t_{4,5}^{-1}\cdots t_{2n-2,2n-1}^{-(n-2)}t_{2n,2n+1}^{-(n-1)}h_1h_3\cdots h_{2n-1}\\
&&\cdot (h_2h_1)\cdots (h_{2n-4}\cdots h_2h_1)(h_{2n-2}\cdots h_2h_1)\\
&=&F^{-1},
\end{eqnarray*}
where $A\stackrel{\text{COMM}}{=}B$ means that $B$ is obtained from $A$ by commutative relations and ``$A_1\underset{\rightarrow }{\underline{A}}A_2$'' (resp. ``$A_1\underset{\leftarrow }{\underline{A}}A_2$'') means that deforming the word $A_1AA_2$ by moving $A$ right (resp. left). 
Thus we have $F^{-1}rr_1(l_i)=h_1h_3\cdots h_{2n-1}F_1\cdots F_{n-2}F_{n-1}rr_1(l_i)=l_i$ for $1\leq i\leq 2n+1$ and 
\begin{eqnarray*}
F^{-1}rr_1=1\Longleftrightarrow r_1=rF
\end{eqnarray*}
in $\LM $. 
Therefore, we have completed the proof of Lemma~\ref{lem-r_1-product}. 
\end{proof}

\begin{figure}[h]
\includegraphics[scale=1.4]{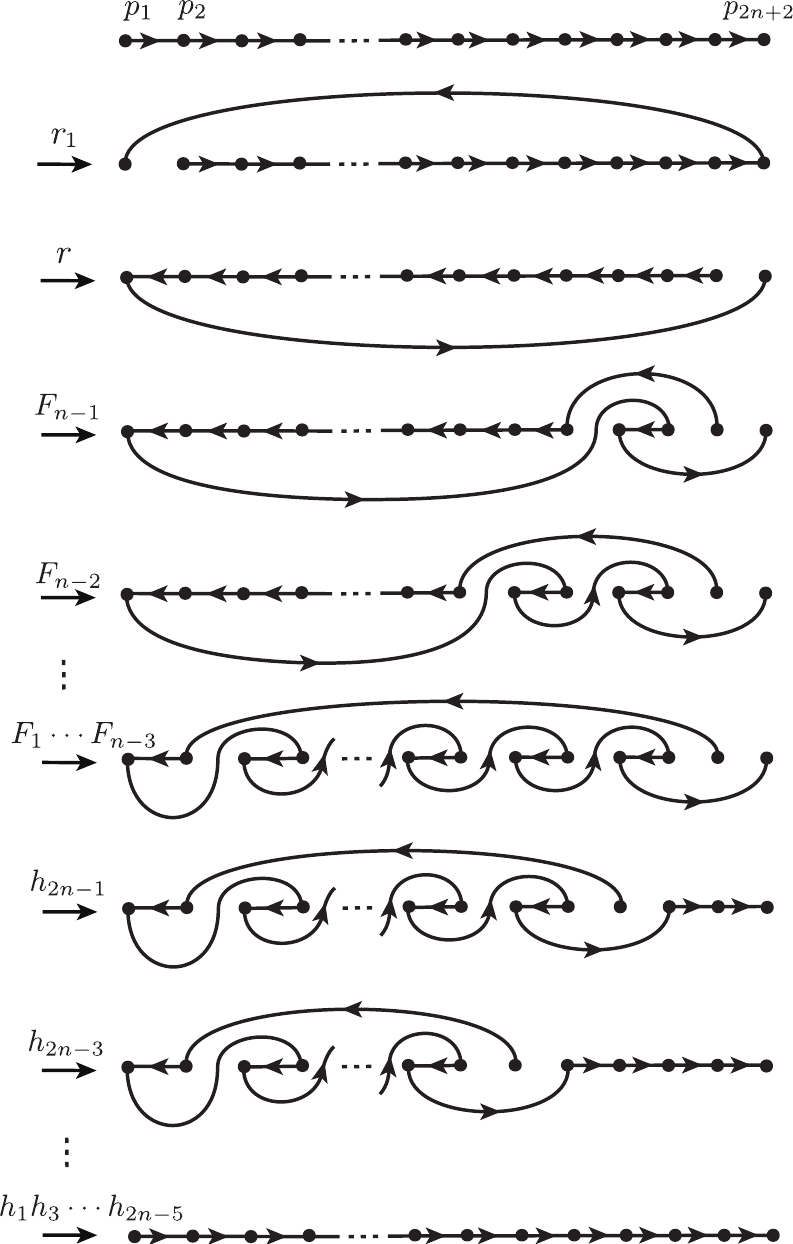}
\caption{The proof of $h_1h_3\cdots h_{2n-1}F_1\cdots F_{n-2}F_{n-1}rr_1(l_i)=l_i$ for $1\leq i\leq 2n+1$.}\label{fig_proof-lem-r_1}
\end{figure}

We can regard a mapping class in $\M $ which has a representative fixing $D$ pointwise as an element in $\Mb $. 
Hence we also regard the mapping classes $t_{i,i+1}$ for $1\leq i\leq 2n+1$ and $h_i$ for $1\leq i\leq 2n-1$ as elements in $\Mb $.

\subsection{Explicit lifts of liftable elements for the balanced superelliptic covering map}\label{section_lifts}

In this section, from Section~6.1 in \cite{Hirose-Omori}, we review explicit lifts of liftable elements for the balanced superelliptic covering map $p=p_{g,k}\colon \Sigma _g\to \Sigma _0$ which are introduced in Section~\ref{section_liftable-element}. 
Throughout this section, we assume that $g=n(k-1)$ for $n\geq 1$ and $k\geq 3$.  

Let $\widetilde{l}_i^l$ for $1\leq i\leq 2n+1$ and $1\leq l\leq k$ be a lift of $l_i$ with respect to $p$ such that $\zeta (\widetilde{l}_i^l)=\widetilde{l}_i^{l+1}$ for  $1\leq l\leq k-1$ and $\zeta (\widetilde{l}_i^k)=\widetilde{l}_i^{1}$. 
We consider a homeomorphism of $\Sigma _g$ as in Figure~\ref{fig_isotopy_surface_3-handles} and identify $\Sigma _g$ with the surface as on the lower side in Figure~\ref{fig_isotopy_surface_3-handles}. 

\begin{figure}[h]
\includegraphics[scale=0.9]{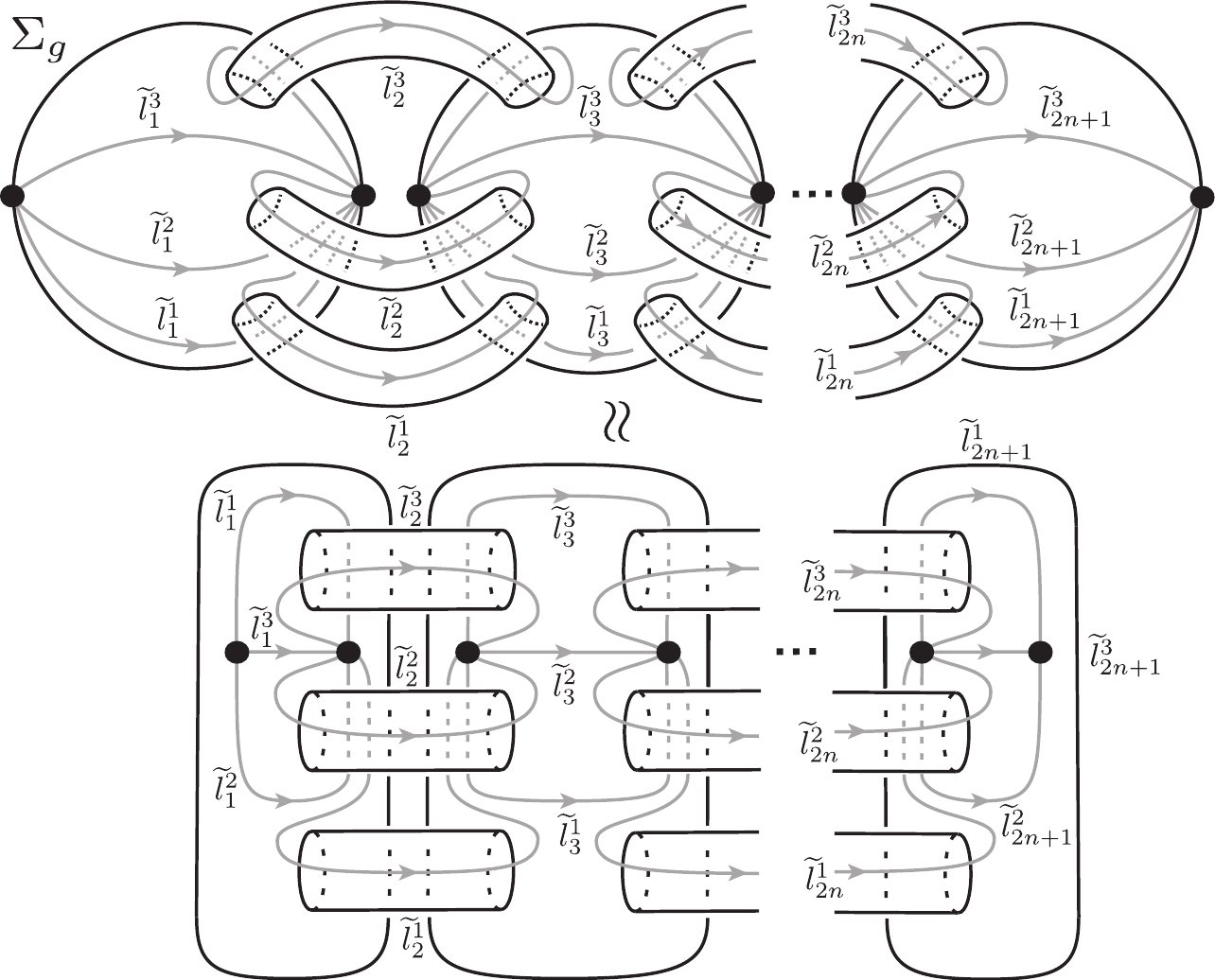}
\caption{A natural homeomorphism of $\Sigma _g$ when $k=3$.}\label{fig_isotopy_surface_3-handles}
\end{figure}

A simple closed curve $\gamma $ on $\Sigma _0-\B $ \textit{lifts} with respect to $p$ if there exists a simple closed curve $\widetilde{\gamma }$ on $\Sigma _{g}-p^{-1}(\B )$ such that the restriction $p|_{\widetilde{\gamma }}\colon \widetilde{\gamma } \to \gamma $ is bijective. 
By Lemma~3.3 in \cite{Ghaswala-Winarski1}, a simple closed curve $\gamma $ on $\Sigma _0-\B $ lifts with respect to $p$ if and only if the algebraic intersection number of $\gamma $ and $l_1\cup l_3\cup \cdots \cup l_{2n+1}$ is zero mod $k$. 
Hence the simple closed curve $\gamma _{i,i+1}$ on $\Sigma _0$ for $1\leq i\leq 2n+1$ (see Figure~\ref{fig_path_l}) lifts with respect to $p$. 
By the information of intersections of $\gamma _{i,i+1}$ and each $l_{j}$ $(1\leq j\leq 2n+1)$, we can show that a lift of $\gamma _{i,i+1}$ for $1\leq i\leq 2n+1$ with respect to $p$ by the simple closed curve $\gamma _{i}^l$ on $\Sigma _g$ for $1\leq l\leq k$ as in Figure~\ref{fig_scc_c_il} (precisely, see before Lemma~6.4 in \cite{Hirose-Omori}). 

\begin{figure}[h]
\includegraphics[scale=0.90]{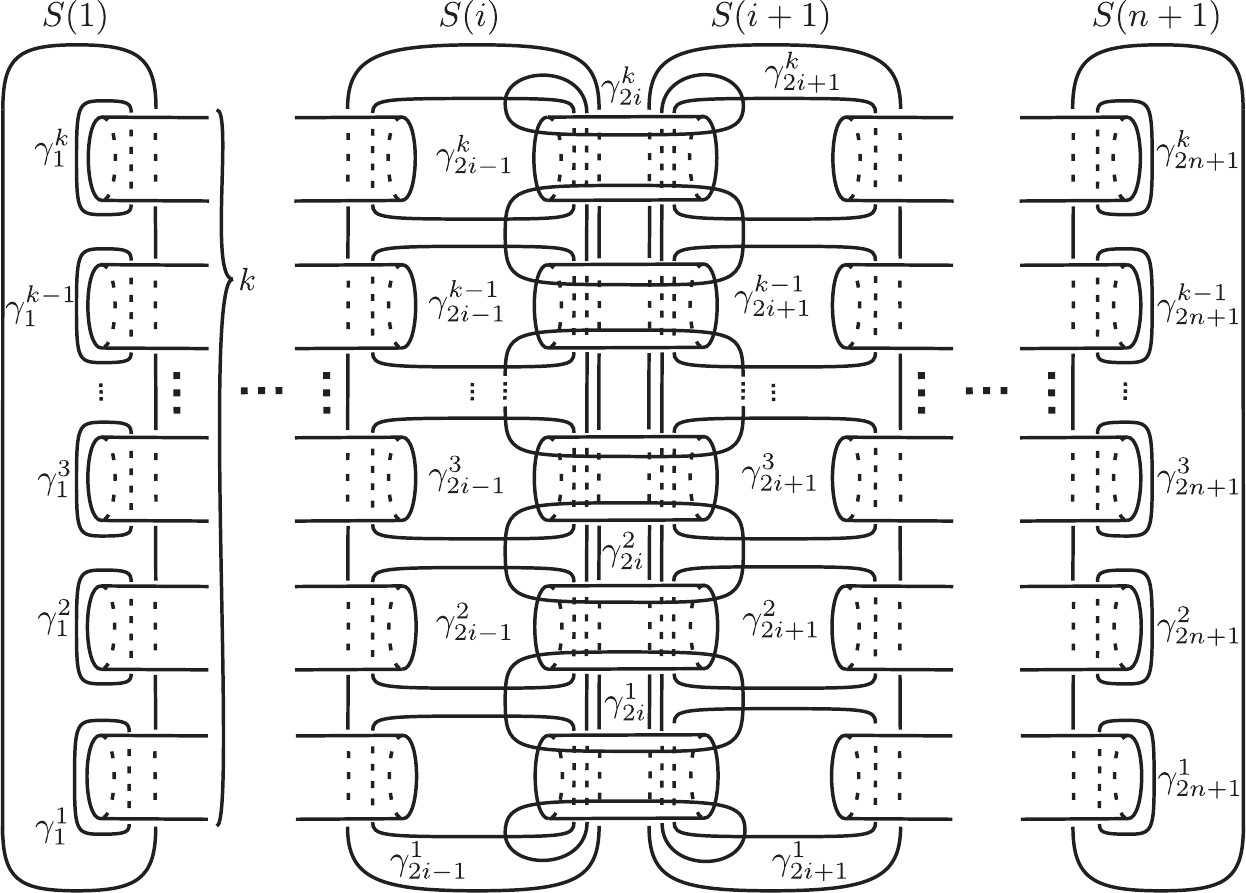}
\caption{Simple closed curves $\gamma _i^l$ on $\Sigma _g$ for $1\leq i\leq 2n+1$ and $1\leq l\leq k$.}\label{fig_scc_c_il}
\end{figure}

Put $\widetilde{t}_{i,i+1}=t_{\gamma _i^1}t_{\gamma _i^2}\cdots t_{\gamma _i^{k}}$ for $1\leq i\leq 2n+1$. 
By Lemma~6.4 in \cite{Hirose-Omori}, we have the following lemma. 

\begin{lem}\label{lift-t_{i,i+1}}
For $1\leq i\leq 2n+1$, $\widetilde{t}_{i,i+1}$ is a lift of $t_{i,i+1}$ with respect to $p$. 
\end{lem}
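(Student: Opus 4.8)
The plan is to verify directly that the homeomorphism $\widetilde{t}_{i,i+1}=t_{\gamma_i^1}t_{\gamma_i^2}\cdots t_{\gamma_i^k}$ on $\Sigma_g$ satisfies the lifting equation $p\circ\widetilde{t}_{i,i+1}=t_{i,i+1}\circ p$, using the Birman--Hilden-type characterization and the explicit picture of the curves. First I would recall that, since $\gamma_{i,i+1}$ lies in $\Sigma_0-(\B\cup D)$ and its algebraic intersection with $l_1\cup l_3\cup\cdots\cup l_{2n+1}$ is zero mod $k$, the curve lifts, and its full preimage $p^{-1}(\gamma_{i,i+1})$ is exactly the disjoint union $\gamma_i^1\cup\gamma_i^2\cup\cdots\cup\gamma_i^k$ of $k$ mutually disjoint simple closed curves permuted cyclically by $\zeta$, with each restriction $p|_{\gamma_i^l}$ a homeomorphism onto $\gamma_{i,i+1}$ (this is the content of the discussion before Lemma~6.4 in~\cite{Hirose-Omori}, which I am assuming). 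The Dehn twist $t_{i,i+1}$ is supported in a regular neighborhood $A$ of $\gamma_{i,i+1}$, an annulus disjoint from the branch locus, so $p^{-1}(A)$ is a disjoint union of $k$ annuli $\widetilde{A}^1,\dots,\widetilde{A}^k$, one around each $\gamma_i^l$, on each of which $p$ restricts to a homeomorphism onto $A$.

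The key step is then a local computation on a single annulus: if $\phi\colon \widetilde{A}^l\to A$ is an orientation-preserving homeomorphism, then $\phi^{-1}\circ(\text{twist on }A)\circ\phi$ is a $\pm$ Dehn twist on $\widetilde{A}^l$, and since $p$ is a covering (hence orientation-preserving and a local homeomorphism away from the branch points), the sign is $+$, so it is exactly $t_{\gamma_i^l}$ with the standard right-handed convention — here one must check that the orientations in Figure~\ref{fig_scc_c_il} are chosen consistently so that no sign discrepancy arises. Performing this on all $k$ annuli simultaneously, the map $\widetilde{t}_{i,i+1}=t_{\gamma_i^1}\cdots t_{\gamma_i^k}$ (a product of commuting twists, since the $\gamma_i^l$ are disjoint) agrees with the prescribed lift of $t_{i,i+1}$ on $p^{-1}(A)$ and is the identity outside, which is precisely where $t_{i,i+1}\circ p$ and $p$ agree; hence $p\circ\widetilde{t}_{i,i+1}=t_{i,i+1}\circ p$ on all of $\Sigma_g$. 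By the definition of liftability in Section~\ref{section_liftable-element} this exhibits $\widetilde{t}_{i,i+1}$ as a lift of $t_{i,i+1}$.

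Alternatively, and perhaps more cleanly, one can invoke Lemma~6.4 of~\cite{Hirose-Omori} directly: that lemma is stated to give exactly this lift, so the proof reduces to matching the notation — identifying the curves $\gamma_i^l$ of the present paper (Figure~\ref{fig_scc_c_il}) with those of~\cite{Hirose-Omori} and noting that $t_{i,i+1}$ here is $t_{\gamma_{i,i+1}}$ with the same orientation conventions. I would present the argument in this compressed form, citing Lemma~6.4 for the geometric input and spelling out only that a lift of a Dehn twist along a lifting curve is the product of the twists along the components of the preimage.

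The main obstacle I anticipate is purely bookkeeping: making sure the \emph{right-handed} twist downstairs lifts to the \emph{right-handed} twists upstairs rather than left-handed ones, i.e. tracking orientations through the covering $p$ and through the identification of $\Sigma_g$ with the surface in Figure~\ref{fig_isotopy_surface_3-handles}. Since $p$ is orientation-preserving and a local homeomorphism on the complement of the branch points, this works out, but it is the point where a careless reader could lose a sign; everything else is a routine consequence of the covering space structure of $p$ restricted to a neighborhood of $\gamma_{i,i+1}$.
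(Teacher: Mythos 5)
Your argument is correct, but it is worth noting that the paper gives no proof of Lemma~\ref{lift-t_{i,i+1}} at all: it simply defines $\widetilde{t}_{i,i+1}=t_{\gamma_i^1}t_{\gamma_i^2}\cdots t_{\gamma_i^k}$ and cites Lemma~6.4 of \cite{Hirose-Omori}, which is precisely the ``alternative'' route in your last two paragraphs. Your primary argument is therefore a genuine self-contained expansion of what that citation encapsulates, and it is sound: since $\gamma_{i,i+1}$ is disjoint from $\B$ and lifts (its algebraic intersection with $l_1\cup l_3\cup\cdots\cup l_{2n+1}$ being zero mod $k$), a regular annular neighborhood $A$ of $\gamma_{i,i+1}$ avoiding the branch points pulls back to $k$ annuli, each mapped homeomorphically onto $A$ because each preimage component of the curve maps with degree one; twisting in each of these annuli gives a homeomorphism covering $t_{i,i+1}$, and since $p$ is orientation-preserving away from the branch points the lifted twists are again right-handed, so the lift is the commuting product $t_{\gamma_i^1}\cdots t_{\gamma_i^k}$. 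The only caveat is that your ``direct'' proof still borrows the same geometric input as the paper's citation, namely the identification of the curves $\gamma_i^1,\dots,\gamma_i^k$ of Figure~\ref{fig_scc_c_il} with the full preimage $p^{-1}(\gamma_{i,i+1})$, which in both this paper and yours is taken from the discussion preceding Lemma~6.4 in \cite{Hirose-Omori}; with that granted, your write-up proves the lemma rather than merely quoting it, at the cost of the routine annulus-by-annulus and orientation bookkeeping you correctly flag.
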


Put $\widetilde{h}_i=t_{\gamma _i^1}t_{\gamma _{i+1}^1}t_{\gamma _i^2}t_{\gamma _{i+1}^2}\cdots t_{\gamma _i^{k-1}}t_{\gamma _{i+1}^{k-1}}t_{\gamma _i^{k}}$ for odd $1\leq i\leq 2n-1$ and $\widetilde{h}_i=t_{\gamma _i^{k}}t_{\gamma _{i+1}^{k}}t_{\gamma _i^{k-1}}t_{\gamma _{i+1}^{k-1}}\cdots t_{\gamma _i^{2}}t_{\gamma _{i+1}^{2}}t_{\gamma _i^{1}}$ for even $2\leq i\leq 2n$.
Then, by Lemma~6.6 in \cite{Hirose-Omori}, we have the following lemma. 

\begin{lem}\label{lift-h_i}
For $1\leq i\leq 2n$, $\widetilde{h}_{i}$ is a lift of $h_{i}$ with respect to $p$. 
\end{lem}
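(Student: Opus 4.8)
The plan is to verify directly that the product of Dehn twists defining $\widetilde{h}_i$ maps via $p$ to $h_i$, by tracking the action on the defining system of arcs. Recall from Section~\ref{section_liftable-element} that $h_i=\sigma _i\sigma _{i+1}\sigma _i$ is the half-rotation of $l_i\cup l_{i+1}$ in a disk neighborhood $\mathcal{N}$, and that a lift $\widetilde{h}_i$ is determined among the liftable mapping classes by its action on the lifted arcs $\widetilde{l}_j^l$ relative to $p^{-1}(\B )$. Since the Birman--Hilden property (cited from Birman--Hilden~\cite{Birman-Hilden2} and~\cite{Birman-Hilden3}) guarantees that a liftable homeomorphism has a \emph{canonical} lift up to the deck group $\langle \zeta \rangle$, it suffices to exhibit one symmetric representative whose projection is $h_i$ and then identify it with the given product of twists.

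First I would note that $h_i$ is supported in the disk $\mathcal{N}$, so its canonical symmetric lift $\widehat{h}_i$ is supported in $p^{-1}(\mathcal{N})=\widetilde{\mathcal{N}}$, which consists of $k$ copies of a disk cyclically permuted by $\zeta$ together with the branch structure over $p_i,p_{i+1},p_{i+2}$ — concretely a subsurface built from the curves $\gamma_i^l,\gamma_{i+1}^l$ for $1\le l\le k$. The key computation, following Lemma~6.6 in~\cite{Hirose-Omori}, is a local model: one shows that inside $\widetilde{\mathcal{N}}$ the half-rotation lifts to the interleaved product $t_{\gamma_i^1}t_{\gamma_{i+1}^1}\cdots t_{\gamma_i^{k-1}}t_{\gamma_{i+1}^{k-1}}t_{\gamma_i^k}$ for odd $i$ (and the reversed-order product for even $i$, reflecting the orientation flip of the identification of $\Sigma_g$ in Figure~\ref{fig_isotopy_surface_3-handles} between consecutive handles). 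The parity split is exactly the phenomenon already visible in the formula for $\widetilde{h}_i$: for odd $i$ the curves $\gamma_i^l,\gamma_{i+1}^l$ sit in the "upward" handle and for even $i$ in the "downward" handle, so the chain of twists realizing the half-rotation runs in the opposite cyclic direction.

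Concretely, I would proceed in three steps. (1) Verify the local chain-twist identity: in a disk with a chain of simple closed curves $c_1,\dots,c_m$ (here $m=2k-1$, alternating between $\gamma_i$-curves and $\gamma_{i+1}$-curves), the product $t_{c_1}t_{c_2}\cdots t_{c_m}$ equals the half-rotation exchanging the two boundary-parallel curves bounding the chain — this is a standard chain-relation / braid-type computation, and one checks it realizes a $\pi$-rotation by computing its action on a transverse arc. (2) Identify the two ends of the chain with the lifts of the boundary data of $\mathcal{N}$ under $p$, using the explicit pictures of $\gamma_i^l$ in Figure~\ref{fig_scc_c_il}; this pins down that the chain twist covers precisely $h_i=\sigma_i\sigma_{i+1}\sigma_i$ rather than some other half-twist. (3) Conclude by the Birman--Hilden uniqueness of lifts: since $\widetilde{h}_i$ and the canonical lift $\widehat{h}_i$ are both symmetric, agree outside $\widetilde{\mathcal{N}}$ (both being the identity there), and induce the same map on $\Sigma_0$, they coincide in $\SM$ (or differ by a power of $\zeta$, which is irrelevant once we pass to $\LM$); hence $p\circ \widetilde{h}_i = h_i\circ p$, which is the assertion.

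The main obstacle is step (1)–(2): getting the \emph{order} of the twists right, i.e. justifying the interleaving pattern $t_{\gamma_i^1}t_{\gamma_{i+1}^1}t_{\gamma_i^2}t_{\gamma_{i+1}^2}\cdots$ and the reversal between odd and even $i$. This is not a formal consequence of liftability — it genuinely requires reading off how $\zeta$ cyclically permutes the sheets and how the chosen identification of $\Sigma_g$ in Figure~\ref{fig_isotopy_surface_3-handles} orients each handle, so the heart of the argument is a careful picture-chase rather than an abstract argument. Everything else (the chain relation, and the reduction via Birman--Hilden) is routine, and in fact the statement is quoted directly from Lemma~6.6 in~\cite{Hirose-Omori}, so in the present paper I would simply cite that lemma for the hard local computation and record only the resulting formula.
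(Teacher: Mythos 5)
The paper offers no argument for this lemma beyond the sentence ``by Lemma~6.6 in \cite{Hirose-Omori}'', and your proposal ends with precisely that citation, so the approaches coincide; your preparatory sketch (local support in $p^{-1}(\mathcal{N})$, the interleaved chain of twists, reduction via Birman--Hilden) is a fair description of what the cited lemma actually verifies. One small slip worth noting: $p^{-1}(\mathcal{N})$ is a \emph{connected} subsurface, namely a regular neighborhood of the $(2k-1)$-chain $\gamma_i^1,\gamma_{i+1}^1,\dots,\gamma_i^k$, not $k$ disjoint disks, since $\mathcal{N}$ contains the three branch points $p_i,p_{i+1},p_{i+2}$ whose preimages are single points --- your own parenthetical correction later in that sentence is the accurate description.
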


\if0
The inclusion $\widetilde{\iota }\colon \Sigma _g^1\hookrightarrow \Sigma _g=\Sigma _g^1\cup \widetilde{D}$ induces the surjective homomorphism $\widetilde{\iota }_\ast \colon \Mgb \to \mathrm{Mod}_{g,1}$ by extending homeomorphisms by the identity map on $\widetilde{D}$. 
The homomorphism $\widetilde{\iota }_\ast$ is called the \textit{capping homomorphism}. 
By an argument in the proof of Proposition~4.5 in \cite{Hirose-Omori}, the fact that $t_{\partial D}=t_{1,2n+1}$ in $\LMb $, and using the capping homomorphism $\widetilde{\iota }_\ast$, we have the following lemma.  

\begin{lem}\label{lift-zeta}
$\zeta $ is a lift of $t_{1,2n+1}$ with respect to $p$. 
\end{lem}
\fi

Let $\widetilde{r}$ be a homeomorphism on $\Sigma _g$ which satisfies $\widetilde{r}(\widetilde{l}_{2i-1}^l)=(\widetilde{l}_{2n-2i+3}^{k-l+2})^{-1}$ for $1\leq i\leq n+1$ and $2\leq l\leq k$ and $\widetilde{r}(\widetilde{l}_{2i}^{l})=(\widetilde{l}_{2n-2i+2}^{k-l+1})^{-1}$ for $1\leq i\leq n$ and $1\leq l\leq k$. 
Remark that $\widetilde{r}$ is uniquely determined up to isotopy and is described as the result of the $\pi $-rotation of $\Sigma _g$ as in Figure~\ref{fig_lift_r}. 
The rotation axis of $\widetilde{r}$ intersects with $\Sigma _g$ at two points in $S(\frac{n+1}{2})$ for odd $n$, at two points in the handle intersecting with $\widetilde{l}_{n+1}^{\frac{k+1}{2}}$ for even $n$ and odd $k$, and at no points for even $n$ and $k$. 
In actuality, the arc $\widetilde{l}_{n+1}^{\frac{k+1}{2}}$ in the case of even $n$ and odd $k$ must intersect with the rotation axis of $\widetilde{r}$, however, we describe $\widetilde{l}_{n+1}^{\frac{k+1}{2}}$ in Figure~\ref{fig_lift_r} as a parallel arc of $\widetilde{l}_{n+1}^{\frac{k+1}{2}}$ to see well. 
Since $r(l_i)=l_{2n-i+2}^{-1}$ for $1\leq i\leq 2n+1$, $\widetilde{r}$ is a lift of $r$ with respect to $p$. 

\begin{figure}[h]
\includegraphics[scale=1.1]{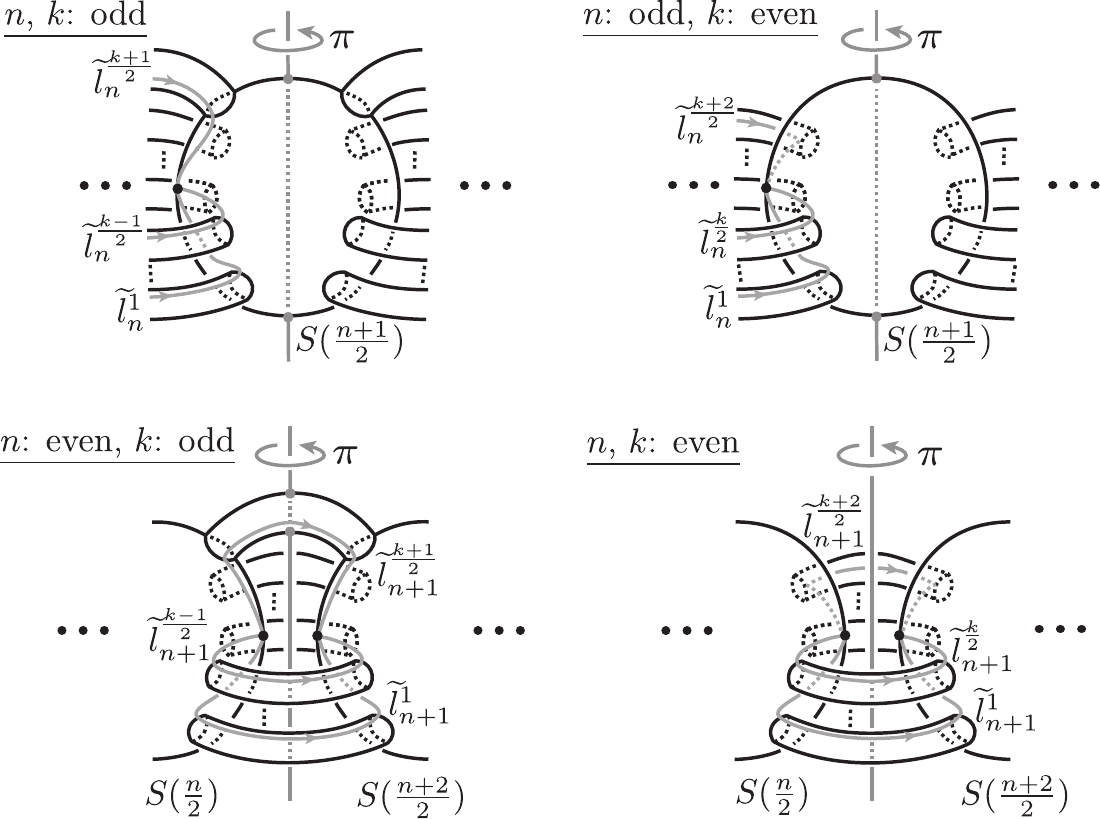}
\caption{A lift $\widetilde{r}$ of $r$ with respect to $p$.}\label{fig_lift_r}
\end{figure}

Put $\widetilde{r}_1=\widetilde{r}(\widetilde{h}_1^{-1}\widetilde{h}_2^{-1}\cdots \widetilde{h}_{2n-2}^{-1})(\widetilde{h}_1^{-1}\widetilde{h}_2^{-1}\cdots \widetilde{h}_{2n-4}^{-1})\cdots (\widetilde{h}_1^{-1}\widetilde{h}_2^{-1})\widetilde{h}_{2n-1}^{-1}\cdots \widetilde{h}_3^{-1}\widetilde{h}_1^{-1}\cdot \\ \widetilde{t}_{2n,2n+1}^{n-1}\cdots \widetilde{t}_{6,7}^2\widetilde{t}_{4,5}$ for $n\geq 2$ and $\widetilde{r}_1=\widetilde{r}\widetilde{h}_1^{-1}$ for $n=1$. 
By Lemmas~\ref{lem-r_1-product}, \ref{lift-t_{i,i+1}}, and \ref{lift-h_i} and the fact that $\widetilde{r}$ is a lift of $r$ with respect to $p$, we have the following lemma. 

\begin{lem}\label{lift-r_1}
$\widetilde{r}_{1}$ is a lift of $r_{1}$ with respect to $p$. 
\end{lem}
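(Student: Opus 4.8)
The plan is to verify that $\widetilde{r}_1$, defined as the indicated product of the lifts $\widetilde{r}$, $\widetilde{h}_i$, and $\widetilde{t}_{i,i+1}$, is indeed a lift of $r_1$ by pushing the factorization of Lemma~\ref{lem-r_1-product} down through the covering. Concretely, Lemma~\ref{lem-r_1-product} gives $r_1 = rF$ in $\LM$, where $F$ is the explicit word in the $h_i$'s and $t_{i,i+1}$'s. The defining word for $\widetilde{r}_1$ is obtained from the word ``$rF$'' by replacing $r$ with $\widetilde{r}$, each $h_i$ with $\widetilde{h}_i$, and each $t_{i,i+1}$ with $\widetilde{t}_{i,i+1}$, in the same order. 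So the statement will follow once we observe that "being a lift of" is compatible with composition in the appropriate sense.

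First I would record the elementary fact that if $\widetilde{\varphi}$ is a lift of $\varphi$ and $\widetilde{\psi}$ is a lift of $\psi$ with respect to $p$, then $\widetilde{\psi}\circ\widetilde{\varphi}$ is a lift of $\psi\circ\varphi$: this is immediate from the commuting squares, since $p\circ(\widetilde{\psi}\widetilde{\varphi}) = (p\circ\widetilde{\psi})\widetilde{\varphi} = (\psi\circ p)\widetilde{\varphi} = \psi(p\circ\widetilde{\varphi}) = \psi\varphi\circ p$, and similarly $\widetilde{\varphi}^{-1}$ is a lift of $\varphi^{-1}$. Hence the set of liftable homeomorphisms is a group and ``taking a lift'' can be performed factor by factor along any word. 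Then I would invoke Lemma~\ref{lift-t_{i,i+1}} for the lifts $\widetilde{t}_{i,i+1}$ of $t_{i,i+1}$, Lemma~\ref{lift-h_i} for the lifts $\widetilde{h}_i$ of $h_i$, and the already-established fact (stated just before Lemma~\ref{lift-r_1}) that $\widetilde{r}$ is a lift of $r$. Applying the composition fact to the word defining $\widetilde{r}_1$, and comparing with the word $rF$ for $r_1$ from Lemma~\ref{lem-r_1-product}, we conclude that $\widetilde{r}_1$ is a lift of $rF = r_1$. The cases $n=1$ and $n\geq 2$ are handled by exactly the same argument applied to the two displayed forms of $F$.

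One point that deserves a word of care: the relation $r_1 = rF$ holds only as an equality of mapping classes (isotopy classes relative to $\B$), whereas $\widetilde{r}_1$ is defined as an actual composition of specific homeomorphisms, and a priori different representatives of $r_1$ could have non-isotopic lifts. However, by the Birman--Hilden theory for the balanced superelliptic covering (as used throughout Section~\ref{section_lifts}), a liftable mapping class on $\Sigma_0$ has a well-defined lift to $\SM$ modulo the deck group $\langle\zeta\rangle$; more elementarily, if $\widetilde{\varphi}$ projects to $\varphi$ then any homeomorphism isotopic to $\varphi$ admits a lift isotopic to $\widetilde{\varphi}$ (a lift of an isotopy is an isotopy). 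Thus ``$\widetilde{r}_1$ is a lift of $r_1$'' is a well-posed statement at the level of mapping classes, and the factorization argument above establishes it.

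The main obstacle is essentially bookkeeping rather than mathematics: one must check that the word used to define $\widetilde{r}_1$ really is the term-by-term lift of the word $rF$ of Lemma~\ref{lem-r_1-product}, with the factors in the same order and with the correct choice among the two formulas for $\widetilde{h}_i$ (the odd-$i$ versus even-$i$ versions). Since the $h_i$ appearing in $F$ have the indices $1,2,\dots,2n-2$ and $2n-1, 2n-3,\dots,3,1$, each occurrence is matched with the corresponding $\widetilde{h}_i$ from Lemma~\ref{lift-h_i}, and similarly each $t_{2i+1,2i+2}$ in $F$ is matched with $\widetilde{t}_{2i+1,2i+2}$ from Lemma~\ref{lift-t_{i,i+1}}; once this identification is made, the composition fact finishes the proof. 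This is precisely what the statement of Lemma~\ref{lift-r_1} asserts, so no further computation is required.
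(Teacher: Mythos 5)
Your proposal is correct and follows essentially the same route as the paper: the paper derives Lemma~\ref{lift-r_1} directly from the factorization $r_1=rF$ of Lemma~\ref{lem-r_1-product}, the lifts provided by Lemmas~\ref{lift-t_{i,i+1}} and \ref{lift-h_i}, the fact that $\widetilde{r}$ is a lift of $r$, and the (implicit) compatibility of lifting with composition, which is exactly your argument. Your additional remark on why the statement is well posed at the level of isotopy classes is a reasonable elaboration but not a departure from the paper's approach.
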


\section{Small generating sets for liftable mapping class groups}\label{section_lmod}

Theorem~\ref{thm_lmod} follows from the next proposition. 

\begin{prop}\label{prop_LM}
\begin{enumerate}
\item For $n\geq 1$, $\LM $ is generated by $h_1$, $t_{1,2}$, and $r_1$. 
\item For $n\geq 2$, $\LMp $ and $\LMb $ are generated by $h_1$, $h_2$, and $h_{2n-1}\cdots h_2h_1t_{1,2}$. 
\item $\mathrm{LMod}_{4,\ast }$ and $\mathrm{LMod}_{3}^1$ are generated by $h_1$ and $t_{1,2}$. 
\end{enumerate}
\end{prop}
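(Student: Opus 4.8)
The plan is to leverage the finite presentations for $\LM$, $\LMp$, and $\LMb$ established in \cite{Hirose-Omori} together with the known structure maps $\Psi\colon\LM\to W_{2n+2}$ and the exact sequences \eqref{exact1}, \eqref{exact2}. For part (1), I would first show that the subgroup $H$ generated by $h_1$, $t_{1,2}$, and $r_1$ surjects onto $W_{2n+2}$ under $\Psi$: indeed $\Psi(h_1)=(1\ 3)$, $\Psi(r_1)=(1\ 2)(2\ 3)\cdots(2n+1\ 2n+2)$, so conjugating $h_1$ by powers of $r_1$ yields all the generators $(1\ 3),(3\ 5),\dots,(2n-1\ 2n+1)$ of $S^o_{n+1}$ and also $(2\ 4),\dots,(2n\ 2n+2)$ of $S^e_{n+1}$ (since $r_1$ shifts indices by one, conjugation sends the odd-index transposition to an even-index one), and $r_1$ itself has nontrivial image in the $\Z_2$ quotient; hence $\Psi(H)=W_{2n+2}$ by \eqref{exact1}. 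By \eqref{exact2} it then remains to show $H\supseteq\PM$. Here I would invoke the explicit generating set of $\PM=\PMn$ (with $n\to 2n+2$) by the Dehn twists $t_{i,j}$ (equivalently by a Humphries-type subfamily); using the braid-type relations among the $\sigma_i$ and the fact that $h_i=\sigma_i\sigma_{i+1}\sigma_i$, I would write every needed $t_{i,j}$ as a product of $\LM$-conjugates of $t_{1,2}$ by elements of the form (word in the $h$'s)$\cdot$(power of $r_1$) — all of which lie in $H$. Concretely, $r_1$-conjugation moves $t_{1,2}$ to $t_{i,i+1}$ for successive $i$, and conjugating these by the $h_j$'s (and using $h_it_{i,i+1}=t_{i+1,i+2}h_i$) produces the longer curves $t_{i,j}$.

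For part (2), the group is $\LMp$ or $\LMb$, whose abelianizations (Theorem~\ref{thm_abel_lmod}) have three $\Z$-summands (or $\Z^2\oplus\Z_n$, etc.), so three generators is the right count. The image of $\Psi$ restricted to $\LMp$ or $\LMb$ still lands in $W_{2n+2}$, and $\Psi(h_1)=(1\ 3)$, $\Psi(h_2)=(2\ 4)$, $\Psi(h_{2n-1}\cdots h_2h_1 t_{1,2})=\Psi(h_{2n-1}\cdots h_2h_1)=(2n-1\ 2n+1)(2n-2\ 2n)\cdots$(a product telescoping through the odd and even chains). I would check these three images generate $W_{2n+2}$: $h_1,h_2$ together with the long product let one recover all transpositions $(2i-1\ 2i+1)$ and $(2i\ 2i+2)$ by taking commutators/conjugates, and the parity class is hit because the long product is parity-reversing when appropriately interpreted — this needs a short computation that I would do carefully with the formula for $\Psi(r)$ versus $\Psi(r_1)$ in mind, possibly replacing the third generator's role by noting $h_{2n-1}\cdots h_1$ conjugates $h_1$ around the chain. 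Then, as before, I must show the three elements generate the kernel of $\Psi$, i.e. $\PM$ intersected with the relevant subgroup; the point of including the ``$t_{1,2}$'' tail on the third generator is precisely to inject one Dehn twist into the subgroup, from which the rest of $\PM$ is generated via $h$-conjugation exactly as in part (1). The extra boundary/marked-point classes ($t_{\partial}$ or the point-pushing subgroup) must also be checked to lie in the subgroup; I expect these follow from the $\LMb$/$\LMp$ presentations in \cite{Hirose-Omori} once the $\PM$-part is in place, since $t_{1,2n+1}=t_{\partial D}$ and its relatives are already words in $t_{i,j}$'s.

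Part (3) is the case $n=1$, $g=k-1$, where $\LMp=\mathrm{LMod}_{4,\ast}$ and $\LMb=\mathrm{LMod}_3^1$ have abelianization $\Z\oplus\Z_2$ and $\Z^2$ respectively, so two generators is minimal. Here $W_4$ is small: $\Psi(h_1)=(1\ 3)$ and $\Psi(t_{1,2})=\mathrm{id}$, and one checks directly that $h_1$ alone generates the needed image in $W_4$ modulo the kernel — more precisely that $\langle h_1,t_{1,2}\rangle$ surjects onto the image of $\LMp$ (resp. $\LMb$) in $S_4$, which for $n=1$ is a small explicit group. The kernel is again a pure mapping class group of a $4$-punctured sphere (a free-ish group) together with a point-push or boundary twist, and I would show it is generated by $t_{1,2}$ and its $h_1$-conjugates. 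The cleanest route is to take the finite presentation of $\mathrm{LMod}_{4,\ast}$ (resp. $\mathrm{LMod}_3^1$) from Corollaries~5.4 and 6.13 in \cite{Hirose-Omori} — which the paper already cites as giving the $n=1$ statements — and exhibit each listed generator as a word in $h_1,t_{1,2}$.

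\textbf{Main obstacle.} The genuinely hard part is the second half of parts (1) and (2): proving that the subgroup generated by the proposed (small) set already contains the whole pure mapping class group $\PM$ (and, for $\LMp$, $\LMb$, the point-pushing or boundary-twist elements). Surjectivity onto $W_{2n+2}$ is a finite permutation-group computation, but recovering every $t_{i,j}$ — equivalently verifying the lift of a known presentation's relator set — requires a careful, systematic use of the braid relations, the relation $h_it_{i,i+1}=t_{i+1,i+2}h_i$, and conjugation by $r_1$ (or by the long $h$-product), and is where I would spend the bulk of the argument, likely organized as a sequence of lemmas reducing a Humphries-type generating set of $\PM$ to the given three elements.
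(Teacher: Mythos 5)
Your skeleton for part (1) — surject onto $W_{2n+2}$ via $\Psi$, then capture $\PM$ — is the same as the paper's, but the step you yourself flag as the main obstacle is exactly where your proposed mechanism breaks down. Conjugation by any mapping class sends the twist $t_{i,i+1}$ about a curve enclosing two marked points to a twist about another curve enclosing two marked points, so no amount of conjugating $t_{1,2}$ (or the $t_{i,i+1}$) by words in the $h_j$'s and $r_1$ can produce $t_{i,j}$ with $j-i\geq 2$; you need genuine product identities, not conjugates. The paper supplies these by quoting chain-type relations from \cite{Hirose-Omori} (relation~(4)(a) of Theorem~5.3, e.g.\ $t_{i,i+2}=h_i^2$ and longer analogues expressing $t_{i,j}$ as words in the $h_m$'s and the $t_{m,m+1}$'s), together with $h_it_{i,i+1}h_i^{-1}=t_{i+1,i+2}$, inside an auxiliary lemma showing that $h_1,\dots,h_{2n},t_{1,2}$ plus \emph{any} parity-reversing element generate $\LM$; one then only needs $r_1h_ir_1^{-1}=h_{i+1}$ to cut down to $\{h_1,t_{1,2},r_1\}$. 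Without those chain relations (or an equivalent replacement), your argument that the subgroup contains $\PM$ does not go through.

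For part (2) the gap is more substantial: the whole point of the third generator $h_{2n-1}\cdots h_2h_1t_{1,2}$ is the relation $(h_{2n-1}\cdots h_2h_1t_{1,2})^{-1}h_i(h_{2n-1}\cdots h_2h_1t_{1,2})=h_{i+2}$ for $1\leq i\leq 2n-3$ (proved in the paper from $h_i^{-1}h_{i+1}^{-1}h_{i+2}^{-1}h_ih_{i+2}h_{i+1}h_i=h_{i+2}$ plus commutations), which recovers all $h_i$ from $h_1,h_2$ and then gives $t_{1,2}=h_1^{-1}\cdots h_{2n-1}^{-1}\cdot(h_{2n-1}\cdots h_1t_{1,2})$; combined with the known generating set $h_1,\dots,h_{2n-1},t_{1,2}$ of $\LMp$ and $\LMb$ (Corollary~5.4 of \cite{Hirose-Omori}) this finishes the proof with no analysis of $\Psi$ or of $\PM$ at all. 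Your plan neither identifies this shift relation nor explains how $t_{1,2}$ and the higher $h_i$ would be extracted from the three proposed generators, and your parity discussion is off: every element of $\LMp$ or $\LMb$ fixes $p_{2n+2}$, hence is parity-preserving, so there is no parity-reversing class to hit and $h_{2n-1}\cdots h_1$ is in fact parity-preserving. Part (3) as you describe it (quoting Corollaries~5.4 and 6.13 of \cite{Hirose-Omori}) matches the paper.
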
 

We remark that Proposition~\ref{prop_LM} for $\mathrm{LMod}_{4,\ast }$ or $\mathrm{LMod}_{3}^1$ is immediately obtained from Corollary~5.4 in~\cite{Hirose-Omori} (see also Lemma~\ref{lem_LMp}). 
By Proposition~\ref{prop_LM}, we can see that $\LM $ is generated by two infinite order elements and one torsion element with order $2n+2$. 
In this section, we will prove Theorem~\ref{thm_lmod} by showing Proposition~\ref{prop_LM}. 

\subsection{Proof of the main theorem for $\boldsymbol{\LM }$}\label{section_LM}

In this section, we will prove Proposition~\ref{prop_LM} for $\LM $. 
First, we prove the following lemma.  

\begin{lem}\label{lem_LM}
For $n\geq 1$ and any parity-reversing element $r^\prime \in \LM $, $\LM $ is generated by $h_i$ for $1\leq i\leq 2n$, $t_{1,2}$, and $r^\prime $. 
\end{lem}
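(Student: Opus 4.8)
The plan is to show that the subgroup $H\leq\LM$ generated by $h_1,\dots,h_{2n},t_{1,2}$, and the given parity-reversing element $r'$ equals $\LM$, using the exact sequence~\eqref{exact2}. Since $\Psi\colon\LM\to W_{2n+2}$ is surjective with kernel $\PM$, it suffices to prove two things: first, that $\Psi(H)=W_{2n+2}$, so that $H$ surjects onto the quotient; and second, that $H\supseteq\PM$, so that $H$ contains the kernel. Together these force $H=\LM$.

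For the first step I would compute the images under $\Psi$. We have $\Psi(h_i)=(i\ i+2)$, so $\Psi(h_1),\Psi(h_3),\dots,\Psi(h_{2n-1})$ generate $S_{n+1}^o$ and $\Psi(h_2),\Psi(h_4),\dots,\Psi(h_{2n})$ generate $S_{n+1}^e$; hence the $h_i$ together generate $S_{n+1}^o\times S_{n+1}^e$. By the exact sequence~\eqref{exact1}, $W_{2n+2}$ is generated by $S_{n+1}^o\times S_{n+1}^e$ together with any single parity-reversing element, and $\Psi(r')$ is parity-reversing by hypothesis. Therefore $\Psi(H)=W_{2n+2}$.

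For the second step, that $\PM\leq H$, the natural route is to invoke a known generating set for $\PM=\PMn$ with $n$ replaced by $2n+2$ — the pure mapping class group of the $(2n+2)$-punctured sphere — in terms of the Dehn twists $t_{i,j}$ ($1\le i<j\le 2n+2$), together with the relations $t_{1,2n+1}=t_{1,2n+2}=t_{2,2n+2}=1$ and $t_{i,2n+2}=t_{1,i-1}$ recorded in the excerpt. So it is enough to show each $t_{i,j}$ lies in $H$. Here I would use that $H$ contains $t_{1,2}$ and all the $h_i$: conjugating $t_{1,2}$ by products of the $h_i$ (which realize the permutations $(i\ i+2)$ and, being honest mapping classes, send the curve $\gamma_{1,2}$ to other curves of the form $\gamma_{i,i+1}$) yields all the $t_{i,i+1}$, and then conjugating $t_{i,i+1}$-type twists by further $h_i$'s and half-twists built from them produces the twists $t_{i,j}$ bounding more punctures. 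One must be slightly careful that only $h_i$ and not arbitrary half-twists $\sigma_i$ are available, but since each $h_i=\sigma_i\sigma_{i+1}\sigma_i$ acts on curves surrounding consecutive even-or-odd-indexed points in a controlled way, a careful bookkeeping of which $\gamma_{i,j}$ arise should close the argument; alternatively one can cite the explicit generating set for $\PM$ used in~\cite{Hirose-Omori} and check each generator individually.

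The main obstacle I expect is this last point: producing \emph{all} the $t_{i,j}$ (not just the "consecutive" ones $t_{i,i+1}$) from $t_{1,2}$ and the $h_i$ alone, since the $h_i$ only move points by two indices at a time. The cleanest fix is probably to first establish that $H$ contains every $t_{i,i+1}$ and every half-twist-like element $\sigma_i^2$ or the relevant "chain" elements, then appeal to a chain-relation or lantern-relation argument (or directly to the presentation of $\PMn$) to get the remaining $t_{i,j}$; this is routine but needs the geometry of the curves $\gamma_{i,j}$ in Figure~\ref{fig_path_l} tracked carefully. Everything else — the surjectivity onto $W_{2n+2}$ and the final assembly via~\eqref{exact2} — is immediate.
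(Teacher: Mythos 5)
Your overall skeleton is the same as the paper's: use the exact sequence~\eqref{exact2}, observe that $\Psi (h_i)=(i\ i+2)$ together with the parity-reversing image $\Psi (r^\prime )$ generate $W_{2n+2}$ via~\eqref{exact1}, reduce to showing that your subgroup $H$ contains a generating set of $\PM $ consisting of twists $t_{i,j}$, and obtain the consecutive twists $t_{i,i+1}$ from $t_{1,2}$ by conjugation with the $h_i$ (the relation $h_it_{i,i+1}h_i^{-1}=t_{i+1,i+2}$). Up to that point your argument coincides with the paper's and is correct.

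The genuine gap is exactly the point you flag yourself: the twists $t_{i,j}$ with $j-i\geq 2$. Your first suggested mechanism cannot work even in principle: conjugation sends $t_{\gamma }$ to $t_{f(\gamma )}$, and $f(\gamma )$ encloses the same number of marked points as $\gamma $, so no conjugate of a two-point twist $t_{i,i+1}$ is ever a twist about a curve enclosing three or more of the $p_i$; one needs actual relations, not conjugacy. Your fallback (chain-type relations or citing a presentation) is the right direction but is left unexecuted, and it is precisely where the content of the proof lies. The paper closes this step by quoting relation~(4)~(a) of Theorem~5.3 in \cite{Hirose-Omori}: $t_{i,i+2}=h_i^2$ (the chain-type identity you anticipated, since $h_i^2$ is the boundary twist of a regular neighborhood of $l_i\cup l_{i+1}$), and, for $j-i\geq 3$, explicit formulas expressing $t_{i,j}$ as a word in the $h_l$ and the consecutive twists $t_{l,l+1}$, all of which already lie in $H$. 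Without these identities, or a verified substitute for each value of $j-i$, the proof is incomplete. Note also that the generating set of $\PM $ to aim for is the one in Proposition~3.1 of \cite{Hirose-Omori}, namely $t_{i,j}$ for $1\leq i<j\leq 2n+1$ with $(i,j)\not =(1,2n+1)$, so only those twists need to be produced.
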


\begin{proof}
Let $r^\prime \in \LM $ be a parity-reversing element and $G$ a subgroup of $\LM $ which is generated by $h_i$ for $1\leq i\leq 2n$, $t_{1,2}$, and $r^\prime $. 
Recall that we have the surjective homomorphism $\Psi \colon \LM \to W_{2n+2}$ by Lemma~\ref{lem_GW} and $\Psi (h_i)=(i\ i+2)$ for $1\leq i\leq 2n$. 
Since $S_{n+1}^o\times S_{n+1}^e$ is generated by $(i\ i+2)$ for $1\leq i\leq 2n$ (for precise definition, see before Lemma~\ref{lem_GW}), by the exact sequence~(\ref{exact1}), $W_{2n+2}$ is generated by $\Psi (h_i)$ for $1\leq i\leq 2n$ and $\Psi (r^\prime )$. 
By Proposition~3.1 in \cite{Hirose-Omori}, $\PM $ is generated by $t_{i,j}$ for $1\leq i<j\leq 2n+1$ with $(i,j)\not =(1,2n+1)$. 
Thus, by the exact sequence~(\ref{exact2}), $\LM $ is generated by $h_i$ for $1\leq i\leq 2n$, $t_{i,j}$ for $1\leq i<j\leq 2n+1$ with $(i,j)\not =(1,2n+1)$, and $r^\prime $. 
It is enough for completing the proof of Lemma~\ref{lem_LM} to prove that $t_{i,j}\in G$ for $(i,j)\not =(1,2)$. 

By the relation~(2)~(a) of Theorem~5.3 in~\cite{Hirose-Omori}, we have the relation $h_it_{i,i+1}h_i^{-1}=t_{i+1,i+2}$ for $1\leq i\leq 2n$. 
Thus we have $t_{i,i+1}\in G$ for $2\leq i\leq 2n$. 
By the relation~(4)~(a) of Theorem~5.3 in~\cite{Hirose-Omori}, we have the relations
\[
t_{i,j}=\left\{
		\begin{array}{ll}
		h_i^2 \quad \text{for }j-i=2,\\
		t_{j-1,j}^{-\frac{j-i-3}{2}}\cdots t_{i+2,i+3}^{-\frac{j-i-3}{2}}t_{i,i+1}^{-\frac{j-i-3}{2}}(h_{j-2}\cdots h_{i+1}h_{i})^{\frac{j-i+1}{2}}\\ \text{for }2n-1\geq j-i \geq 3\text{ is odd},\\
		t_{j-2,j-1}^{-\frac{j-i-2}{2}}\cdots t_{i+2,i+3}^{-\frac{j-i-2}{2}}t_{i,i+1}^{-\frac{j-i-2}{2}}h_{j-2}\cdots h_{i+2}h_{i}h_{i}h_{i+2}\cdots h_{j-2}\\ \cdot (h_{j-3}\cdots h_{i+1}h_{i})^{\frac{j-i}{2}}\quad \text{for }2n\geq j-i \geq 4\text{ is even}. 
		\end{array}
		\right.
\]
Therefore, we have $t_{i,j}\in G$ for $j-i\geq 2$ and have completed the proof of Lemma~\ref{lem_LM}. 
\end{proof}

\begin{proof}[Proof of Proposition~\ref{prop_LM} for $\LM $]
Let $G$ be a subgroup of $\LM $ which is generated by $h_1$, $t_{1,2}$, and $r_1$. 
Since $r_1$ is a parity reversing element, by Lemma~\ref{lem_LM}, $\LM $ is generated by $h_i$ for $1\leq i\leq 2n$, $t_{1,2}$, and $r_1$. 
Hence it is enough for completing the proof of Proposition~\ref{prop_LM} to prove that $h_{i}\in G$ for $2\leq i\leq 2n$. 

By the definition of $r_1$, we have $r_1(l_i\cup l_{i+1})=l_{i+1}\cup l_{i+2}$ for $1\leq i\leq 2n-1$. 
Thus we have $r_1h_{i}r_1^{-1}=h_{i+1}$ for $1\leq i\leq 2n-1$ and $h_{i}\in G$ for $2\leq i\leq 2n$. 
Therefore we have completed the proof of Proposition~\ref{prop_LM}. 
\end{proof}

\subsection{Proofs of the main theorem for $\boldsymbol{\LMp }$ and $\boldsymbol{\LMb }$}\label{section_LMp}

In this section, we will prove Proposition~\ref{prop_LM} for $\LMp $ and $\LMb $. 
By Corollary~5.4 in~\cite{Hirose-Omori}, we have the following lemma. 

\begin{lem}[Corollary~5.4 in~\cite{Hirose-Omori}]\label{lem_LMp}
For $n\geq 1$, $\LMp$ and $\LMb $ are generated by $h_1,\ h_2,\ \dots ,\ h_{2n-1}$, and $t_{1,2}$. 
\end{lem}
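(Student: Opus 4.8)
The plan is to adapt the proof of Lemma~\ref{lem_LM} to the setting where the marked point (resp.\ boundary) $p_{2n+2}$ is fixed. Write $G$ for the subgroup of $\LMp$ (resp.\ $\LMb$) generated by $h_1,\ h_2,\ \dots,\ h_{2n-1}$ and $t_{1,2}$. I would again split the problem along the exact sequence obtained by restricting $\Psi$, handling the image (a permutation group) and the kernel (a pure mapping class group) separately, and then recombine them by the standard argument: if $\Psi|_G$ is onto $\Psi(\LMp)$ and $\ker(\Psi)\subseteq G$, then for every $f$ there is $g\in G$ with $fg^{-1}\in\ker\Psi\subseteq G$, whence $f\in G$ and $G=\LMp$ (resp.\ $\LMb$).

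The essential structural difference from Lemma~\ref{lem_LM} is that no parity-reversing element survives: a parity-reversing permutation sends $\B _o$ to $\B _e$ and therefore cannot fix the even point $p_{2n+2}$. Hence, by the liftability criterion (Lemma~\ref{lem_GW} together with Lemmas~4.2 and 4.4 in~\cite{Hirose-Omori}), the image $\Psi(\LMp)$ (resp.\ $\Psi(\LMb)$) is contained in the parity-preserving subgroup fixing $p_{2n+2}$, namely $S_{n+1}^o\times\operatorname{Sym}\{p_2,p_4,\dots,p_{2n}\}$. Since $\Psi(h_i)=(i\ i+2)$, the odd-indexed generators $h_1,h_3,\dots,h_{2n-1}$ realize the transpositions $(1\ 3),(3\ 5),\dots,(2n-1\ 2n+1)$ generating $S_{n+1}^o$, while the even-indexed generators $h_2,h_4,\dots,h_{2n-2}$ realize $(2\ 4),(4\ 6),\dots,(2n-2\ 2n)$ generating $\operatorname{Sym}\{p_2,\dots,p_{2n}\}$; the one transposition we lose, $(2n\ 2n+2)=\Psi(h_{2n})$, is precisely the one moving $p_{2n+2}$. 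Thus $\Psi|_G$ is already onto the full image.

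It remains to put the kernel inside $G$. The kernel of $\Psi$ on $\LMp$ is $\PM =\mathrm{PMod}_{0,2n+2}$, generated by the $t_{i,j}$ with $1\leq i<j\leq 2n+1$ and $(i,j)\neq(1,2n+1)$ by Proposition~3.1 in~\cite{Hirose-Omori}; on $\LMb$ it is the corresponding pure mapping class group of the bounded surface, generated by the same $t_{i,j}$ together with the boundary twist $t_{1,2n+1}=t_{\partial D}$. As in Lemma~\ref{lem_LM}, the relation $h_it_{i,i+1}h_i^{-1}=t_{i+1,i+2}$ (relation~(2)(a) of Theorem~5.3 in~\cite{Hirose-Omori}) lets me obtain $t_{2,3},t_{3,4},\dots,t_{2n,2n+1}$ from $t_{1,2}$, and the point is that this only uses $h_1,\dots,h_{2n-1}$ (the top index $i=2n-1$ already yields $t_{2n,2n+1}$, so $h_{2n}$ is never needed). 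The remaining $t_{i,j}$ with $j-i\geq 2$ are then expressed by the formulas of relation~(4)(a) of Theorem~5.3, whose largest $h$-index is $j-2\leq 2n-1$; in particular the formula with $j-i=2n$ produces the boundary twist $t_{1,2n+1}$ in $\LMb$ using only $h_1,\dots,h_{2n-1}$.

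I expect the main obstacle to be the kernel step for $\LMb$: one must be sure that the pure subgroup of the bounded surface really is generated by the listed $t_{i,j}$ \emph{including} $t_{1,2n+1}$, and that relation~(4)(a) continues to hold verbatim in $\LMb$ so that its $j-i=2n$ instance reproduces the boundary twist rather than the trivial element it gives in the closed and marked-point cases (where $\gamma _{1,2n+1}$ bounds a disk containing only $p_{2n+2}$). Granting the presentations of~\cite{Hirose-Omori}, this is exactly what Corollary~5.4 there records, so the above gives a self-contained route to Lemma~\ref{lem_LMp}.
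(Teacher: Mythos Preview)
The paper does not prove this lemma at all; it is simply quoted as Corollary~5.4 of~\cite{Hirose-Omori} and used as a black box. So there is no in-paper argument to compare against.

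That said, your reconstruction is sound and is essentially the natural route to such a corollary from the presentation results of~\cite{Hirose-Omori}. The key observations you make are all correct: (i) fixing $p_{2n+2}$ kills parity-reversing elements, so the image of $\Psi$ is exactly $S_{n+1}^o\times \operatorname{Sym}\{p_2,\dots,p_{2n}\}$, generated by $\Psi(h_1),\dots,\Psi(h_{2n-1})$; (ii) the conjugation $h_it_{i,i+1}h_i^{-1}=t_{i+1,i+2}$ uses only $h_1,\dots,h_{2n-1}$ to produce all $t_{i,i+1}$; and (iii) in the formulas of relation~(4)(a) the largest $h$-index occurring is $j-2\leq 2n-1$, so every $t_{i,j}$ with $1\leq i<j\leq 2n+1$ lies in $G$, including the boundary twist $t_{1,2n+1}$ in the $\LMb$ case. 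Note only that in~\cite{Hirose-Omori} the relevant relations for $\LMp$ and $\LMb$ are recorded in Theorem~5.1 rather than Theorem~5.3 (the latter is for $\LM$); the identities are the same, but the citation should be adjusted. The caveat you flag about the kernel for $\LMb$ is real but harmless: the pure mapping class group of the disk with $2n+1$ marked points is the pure braid group $P_{2n+1}$, which is indeed generated by the $t_{i,j}$ for $1\leq i<j\leq 2n+1$.
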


To prove Proposition~\ref{prop_LM} for $\LMp $ and $\LMb $, we prepare the following lemma. 

\begin{lem}\label{lem_rel_LMp}
For $n\geq 2$ and $1\leq i\leq 2n-3$, the relation 
\[
(h_{2n-1}\cdots h_2h_1t_{1,2})^{-1}h_i(h_{2n-1}\cdots h_2h_1t_{1,2})=h_{i+2}
\]
holds in $\LMp$ and $\LMb $. 
\end{lem}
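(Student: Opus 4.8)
The relation to be proved is a conjugation identity in $\LMp$ (and $\LMb$): conjugating $h_i$ by $w := h_{2n-1}\cdots h_2h_1t_{1,2}$ yields $h_{i+2}$, for $1\le i\le 2n-3$. Since the isotopy class of a mapping class in $\M$ (and hence, after capping/puncturing, in $\LMp$ and $\LMb$) is detected by its action on the arc system $L=l_1\cup\cdots\cup l_{2n+1}$ relative to $\mathcal B$, the cleanest route is to track the image of $l_i\cup l_{i+1}$ under $w$ and show $w(l_i\cup l_{i+1})=l_{i+2}\cup l_{i+3}$ up to isotopy relative to $\mathcal B$, which forces $w h_i w^{-1}=h_{i+2}$ because $h_i$ is the anticlockwise half-rotation supported in a regular neighborhood of $l_i\cup l_{i+1}$. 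So the first step is to compute $w(l_j)$ for all relevant $j$.

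To do that I would break $w=h_{2n-1}\cdots h_2 h_1 t_{1,2}$ into its factors and apply them to $L$ left to right (recalling the paper's convention that in $[\psi][\varphi]$ the map $\varphi$ applies first, so in $w h_i w^{-1}$ one reads $w^{-1}$ first; equivalently it is cleanest to show $w^{-1}h_{i+2}w = h_i$, i.e. track $w^{-1}(l_{i+2}\cup l_{i+3})$). First apply $t_{1,2}=t_{\gamma_{1,2}}$, a Dehn twist about a curve surrounding $p_1,p_2$: it fixes every $l_j$ with $j\ge 3$ and modifies only $l_1$ (and the relevant ends near $p_1,p_2$). Then apply $h_1=\sigma_1\sigma_2\sigma_1$, which is the half-rotation of $l_1\cup l_2$ inside a disk neighborhood, so $h_1$ sends $l_1\mapsto l_2$-ish, $l_2\mapsto l_1$-ish and fixes $l_j$ for $j\ge 4$, and interacts with $l_3$ near $p_3$. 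Then $h_2$ similarly shifts indices around $l_2\cup l_3$, and so on up to $h_{2n-1}$. The composite $h_{2n-1}\cdots h_2h_1$ acts as a "cascading shift": one checks inductively that it sends $l_j\mapsto l_{j+1}$ for $1\le j\le 2n-1$ in the appropriate region — essentially because $h_{j}$ swaps the roles of $l_{j}$ and $l_{j+1}$, and composing these in decreasing order of index produces a cyclic-type push. The role of the extra $t_{1,2}$ factor is to correct the discrepancy near the endpoints $p_1,p_2$ created by $h_1$ (which, being only a half-twist along $l_1\cup l_2$, does not by itself map $l_1$ onto $l_2$ rel $\mathcal B$ — it differs by a twist), so that the net effect of $w$ on $l_i\cup l_{i+1}$ for $i$ in the stated range is exactly the index shift $i\mapsto i+2$. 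This mirrors precisely how the relation $h_i t_{i,i+1}h_i^{-1}=t_{i+1,i+2}$ (relation (2)(a) of Theorem 5.3 in \cite{Hirose-Omori}, already invoked) encodes the same kind of shift.

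Alternatively — and this is probably the shorter and more robust write-up — I would derive the identity purely algebraically from the finite presentation of $\LMp$ (equivalently $\LMb$) recorded in \cite{Hirose-Omori}, using the braid-type relations among the $h_j$ and the mixed relations between $h_j$ and $t_{1,2}$: namely $h_jh_{j+1}h_j=h_{j+1}h_jh_{j+1}$, $h_jh_{j'}=h_{j'}h_j$ for $|j-j'|\ge 2$, $h_j t_{i,i+1}=t_{i,i+1}h_j$ when $i\notin\{j-2,j-1,j,j+1\}$, and $h_j t_{j,j+1}h_j^{-1}=t_{j+1,j+2}$. With these, one conjugates $h_i$ by the word $h_{2n-1}\cdots h_2h_1$ first: all factors $h_j$ with $j\notin\{i-1,i,i+1,i+2\}$ commute past $h_i$ and cancel with their inverses, leaving a short word in $h_{i-1},h_i,h_{i+1},h_{i+2}$ whose effect is the braid shift $h_i\mapsto h_{i+2}$ (up to the same boundary correction); then the leftover $t_{1,2}$ commutes with $h_{i+2}$ for $i\ge 1$ (since $i+2\ge 3$, so $1\notin\{i,i+1,i+2,i+3\}$) and drops out. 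The case division $1\le i\le 2n-3$ is exactly what guarantees both $h_{i+2}$ and the needed neighboring generators stay within the available index range.

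**Main obstacle.** The delicate point is the bookkeeping near $p_1,p_2$: showing that the single correcting factor $t_{1,2}$ exactly cancels the twisting discrepancy produced by $h_1$ and propagated by $h_2,\dots,h_{2n-1}$, so that the conjugate is literally $h_{i+2}$ and not $h_{i+2}$ times some product of $t_{j,j+1}$'s. This is where I would be most careful — either by drawing the arc pictures for small $n$ (say $n=2,3$) to fix the pattern and then running the induction, or by tracking the twist-coefficients explicitly through the relations $h_jt_{j,j+1}h_j^{-1}=t_{j+1,j+2}$ and $[h_j,t_{i,i+1}]=1$. Everything else is routine application of the already-established presentation and the action-on-arcs criterion.
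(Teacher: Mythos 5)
There is a genuine gap: the relations you propose to run the ``shorter and more robust'' algebraic argument with are not valid in $\LMp$ or $\LMb$. The elements $h_j$ are half-rotations about $l_j\cup l_{j+1}$ with $\Psi(h_j)=(j\ \ j+2)$; hence $h_j$ and $h_{j+2}$ do \emph{not} commute (already their images $(j\ \ j+2)$ and $(j+2\ \ j+4)$ fail to commute in $S_{2n+2}$), so commutation among the $h_j$ holds only for index difference at least $3$, not $2$ as you assert. Likewise the braid relation $h_jh_{j+1}h_j=h_{j+1}h_jh_{j+1}$ is false, since $\Psi(h_jh_{j+1}h_j)=(j+1\ \ j+3)$ while $\Psi(h_{j+1}h_jh_{j+1})=(j\ \ j+2)$. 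With the correct commutations, conjugating $h_i$ by $h_{2n-1}\cdots h_2h_1t_{1,2}$ only lets you cancel the factors $h_j$ with $j\geq i+3$ (and, at the very end, move $h_{i-1},\dots,h_1$ and $t_{1,2}$ past $h_{i+2}$); what remains in the middle is $h_i^{-1}h_{i+1}^{-1}h_{i+2}^{-1}\,h_i\,h_{i+2}h_{i+1}h_i$, and the fact that this equals $h_{i+2}$ is exactly the nontrivial input. It is not a consequence of braid-type relations; it is the relation (2)(c) of Theorem~5.1 in \cite{Hirose-Omori}, which the paper verifies geometrically by checking $h_i^{-1}h_{i+1}^{-1}h_{i+2}^{-1}(l_i\cup l_{i+1})=l_{i+2}\cup l_{i+3}$ relative to $\mathcal{B}$ and using that conjugating a half-rotation carries it to the half-rotation along the image arcs.

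Your first, geometric route is the right kind of argument but stops at the decisive point: the claimed ``cascading shift'' $h_{2n-1}\cdots h_2h_1(l_j)=l_{j+1}$ is not true relative to $\mathcal{B}$ (the images acquire twisting, as you yourself concede with ``$l_2$-ish''), and your ``main obstacle'' paragraph defers rather than resolves that bookkeeping. In fact no global tracking of the image of $L$ under the whole word is needed, and $t_{1,2}$ plays no corrective role in this lemma: once the local identity above is in hand, everything else is the commutations $h_ih_j=h_jh_i$ for $|i-j|\geq 3$ and $t_{1,2}h_j=h_jt_{1,2}$ for $j\geq 3$, so the outer $t_{1,2}^{\pm1}$ simply cancel against each other. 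As written, neither of your two routes establishes the relation.
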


\begin{proof}
First, since we can check that $h_{i}^{-1}h_{i+1}^{-1}h_{i+2}^{-1}(l_i\cup l_{i+1})=l_{i+2}\cup l_{i+3}$ for $1\leq i\leq 2n-3$, we have $h_{i}^{-1}h_{i+1}^{-1}h_{i+2}^{-1}h_ih_{i+2}h_{i+1}h_{i}=h_{i+2}$ for $1\leq i\leq 2n-3$ (that is equivalent to the relation~(2)~(c) of Theorem~5.1 in \cite{Hirose-Omori}). 
Thus we have 
\begin{eqnarray*}
&&t_{1,2}^{-1}h_1^{-1}h_2^{-1}\cdots h_{i+2}^{-1}h_{i+3}^{-1}\cdots h_{2n-1}^{-1}\cdot h_i\cdot \underset{\leftarrow }{\underline{h_{2n-1}\cdots h_{i+3}}}h_{i+2}\cdots h_2h_1t_{1,2}\\
&\overset{\text{COMM}}{\underset{}{=}}&t_{1,2}^{-1}h_1^{-1}h_2^{-1}\cdots h_{i-1}^{-1}\underline{h_{i}^{-1}h_{i+1}^{-1}h_{i+2}^{-1}\cdot h_i\cdot h_{i+2}h_{i+1}h_{i}}h_{i-1}\cdots h_2h_1t_{1,2}\\
&\overset{}{\underset{}{=}}&t_{1,2}^{-1}h_1^{-1}h_2^{-1}\cdots h_{i-1}^{-1}\underset{\leftarrow }{\underline{h_{i+2}}}h_{i-1}\cdots h_2h_1t_{1,2}\\
&\overset{\text{COMM}}{\underset{}{=}}&h_{i+2}. 
\end{eqnarray*}
Therefore, we have completed the proof of Lemma~\ref{lem_rel_LMp}. 
\end{proof}

\begin{proof}[Proof of Proposition~\ref{prop_LM} for $\LMp $ and $\LMb $]
Since Theorem~\ref{thm_lmod} for $\LMp $ and $\LMb $ when $n=1$ is obtained from Lemma~\ref{lem_LMp}, we assume that $n\geq 2$. 
We will prove Proposition~\ref{prop_LM} for $\LMp $ (for the case of $\LMb $, we can prove by a similar argument below). 
Let $G$ be a subgroup of $\LMp $ which is generated by $h_1$, $h_2$, and $h_{2n-1}\cdots h_2h_1t_{1,2}$. 
By Lemma~\ref{lem_LMp}, $\LMp $ is generated by $h_i$ for $1\leq i\leq 2n-1$ and $t_{1,2}$. 
Hence it is enough for completing the proof of Proposition~\ref{prop_LM} for $\LMp $ to prove that $h_{i}\in G$ for $3\leq i\leq 2n$ and $t_{1,2}\in G$. 

Since the relation $(h_{2n-1}\cdots h_2h_1t_{1,2})^{-1}h_i(h_{2n-1}\cdots h_2h_1t_{1,2})=h_{i+2}$ holds in $\LMp $ for $1\leq i\leq 2n-3$ by Lemma~\ref{lem_rel_LMp}, we have $h_i\in G$ for $3\leq i\leq 2n-1$. 
Hence $h_i$ lies in $G$ for any $1\leq i\leq 2n-1$, we have 
\[
t_{1,2}=h_1^{-1}h_2^{-1}\cdots h_{2n-1}^{-1}\cdot (h_{2n-1}\cdots h_2h_1t_{1,2})\in G.
\] 
Therefore we have completed the proof of Proposition~\ref{prop_LM}. 
\end{proof}

\section{Small generating sets for balanced superelliptic mapping class groups}\label{section_smod}

Throughout this section, we assume that $g=n(k-1)$ for $n\geq 1$ and $k\geq 3$.  
From Section~\ref{section_lifts}, we recall that 
\begin{itemize}
\item $\widetilde{t}_{i,i+1}=t_{\gamma _i^1}t_{\gamma _i^2}\cdots t_{\gamma _i^{k}}$ for $1\leq i\leq 2n+1$, 
\item $\widetilde{h}_i=t_{\gamma _i^1}t_{\gamma _{i+1}^1}t_{\gamma _i^2}t_{\gamma _{i+1}^2}\cdots t_{\gamma _i^{k-1}}t_{\gamma _{i+1}^{k-1}}t_{\gamma _i^{k}}$ for odd $1\leq i\leq 2n-1$,  
\item $\widetilde{h}_i=t_{\gamma _i^{k}}t_{\gamma _{i+1}^{k}}t_{\gamma _i^{k-1}}t_{\gamma _{i+1}^{k-1}}\cdots t_{\gamma _i^{2}}t_{\gamma _{i+1}^{2}}t_{\gamma _i^{1}}$ for even $2\leq i\leq 2n$ (see Figure~\ref{fig_scc_c_il}),
\item $\widetilde{r}_1=\widetilde{r}(\widetilde{h}_1^{-1}\widetilde{h}_2^{-1}\cdots \widetilde{h}_{2n-2}^{-1})(\widetilde{h}_1^{-1}\widetilde{h}_2^{-1}\cdots \widetilde{h}_{2n-4}^{-1})\cdots (\widetilde{h}_1^{-1}\widetilde{h}_2^{-1})\widetilde{h}_{2n-1}^{-1}\cdots \widetilde{h}_3^{-1}\widetilde{h}_1^{-1}\\ \widetilde{t}_{2n,2n+1}^{n-1}\cdots \widetilde{t}_{6,7}^2\widetilde{t}_{4,5}$ for $n\geq 2$, 
\item $\widetilde{r}_1=\widetilde{r}\widetilde{h}_1^{-1}$ for $n=1$. 
\end{itemize}

In this section, we will prove Theorem~\ref{thm_smod} by showing the following proposition. 
\begin{prop}\label{prop_SM}
For $n\geq 2$ and $k\geq 3$ with $g=n(k-1)$, 
\begin{enumerate}
\item $\SM $ is generated by $\widetilde{h}_1$, $\widetilde{t}_{1,2}$, and $\widetilde{r}_1$, 
\item $\SMp $ and $\SMb $ are generated by $\widetilde{h}_1$, $\widetilde{h}_2$, and $\widetilde{h}_{2n-1}\cdots \widetilde{h}_2\widetilde{h}_1\widetilde{t}_{1,2}$ when $n\geq 2$, 
\item $\SMp $ and $\SMb $ are generated by $\widetilde{h}_1$ and $\widetilde{t}_{1,2}$. 
\end{enumerate}
\end{prop}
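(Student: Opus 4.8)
The strategy is to transfer the results of Proposition~\ref{prop_LM} for the liftable groups upstairs to the symmetric groups via the Birman--Hilden isomorphisms, using the explicit lifts collected in Section~\ref{section_lifts}. Recall from the introduction that $\SMb \cong \LMb $ and that $\SM $ (resp.\ $\SMp$) is an extension of $\LM $ (resp.\ $\LMp$) by the cyclic group $\left< \zeta \right> $ of order $k$. The key observation is that the maps $\widetilde{t}_{i,i+1}\mapsto t_{i,i+1}$, $\widetilde{h}_i\mapsto h_i$, $\widetilde{r}\mapsto r$, and $\widetilde{r}_1\mapsto r_1$ induced by $p$ are compatible with these (iso)morphisms, by Lemmas~\ref{lift-t_{i,i+1}}, \ref{lift-h_i}, and \ref{lift-r_1}, so that a word in the tilded generators maps to the corresponding word in the untilded generators.

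\textbf{Key steps.}
First I would handle the boundary case $\SMb $: since $p\colon \SMb \xrightarrow{\ \cong\ }\LMb $ sends $\widetilde{h}_i\mapsto h_i$ and $\widetilde{t}_{1,2}\mapsto t_{1,2}$ (here the lifts are literally well-defined self-homeomorphisms of $\Sigma_g^1$ fixing the boundary, with no $\left< \zeta \right>$ ambiguity), Proposition~\ref{prop_LM}(2) for $\LMb $ together with this isomorphism gives immediately that $\SMb $ is generated by $\widetilde{h}_1$, $\widetilde{h}_2$, and $\widetilde{h}_{2n-1}\cdots \widetilde{h}_2\widetilde{h}_1\widetilde{t}_{1,2}$, proving part~(2) for $\SMb$; part~(3) for $\SMb$ follows the same way from Proposition~\ref{prop_LM}(3) / Lemma~\ref{lem_LMp} when $n=1$. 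Second, for $\SMp$ (and for $\SM$) I would exploit the central extension $1\to \left< \zeta \right> \to \SMp \to \LMp \to 1$: the listed tilded elements already map onto a generating set of $\LMp$, so the subgroup $\widetilde{G}$ they generate surjects onto $\LMp$, and it remains only to show $\zeta \in \widetilde{G}$. For this I would run the \emph{same} conjugation relations as in the proof of Lemma~\ref{lem_rel_LMp} and of Proposition~\ref{prop_LM}(2), but now upstairs: the relation $(\widetilde{h}_{2n-1}\cdots \widetilde{h}_2\widetilde{h}_1\widetilde{t}_{1,2})^{-1}\widetilde{h}_i(\widetilde{h}_{2n-1}\cdots \widetilde{h}_2\widetilde{h}_1\widetilde{t}_{1,2})=\widetilde{h}_{i+2}$ should hold in $\SMp$ exactly because it holds in $\LMp$ \emph{and} both sides already lie in $\widetilde{G}$ --- more carefully, one checks the relation in $\SMp$ directly, since these are honest Dehn-twist words whose supporting curves behave as in Section~\ref{section_lifts}, so the relation is not merely "true mod $\left< \zeta \right>$". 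Having all $\widetilde{h}_i\in\widetilde{G}$, the identity $\widetilde{t}_{1,2}=\widetilde{h}_1^{-1}\cdots \widetilde{h}_{2n-1}^{-1}(\widetilde{h}_{2n-1}\cdots \widetilde{h}_1\widetilde{t}_{1,2})$ shows $\widetilde{t}_{1,2}\in\widetilde{G}$, so $\widetilde{G}\supseteq \{\widetilde{h}_1,\dots,\widetilde{h}_{2n-1},\widetilde{t}_{1,2}\}$, which is a lift of a generating set of $\LMp$; then $\widetilde{G}=\SMp$ provided this lifted set already generates $\zeta$, i.e.\ provided the Birman--Hilden section lands in $\widetilde{G}$.

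\textbf{Main obstacle.}
The delicate point is precisely the last one: showing that the chosen lifts $\widetilde{h}_i,\widetilde{t}_{1,2}$ of a generating set of $\LMp$ generate the \emph{whole} of $\SMp$, not just a complement to $\left< \zeta \right>$ modulo which they project isomorphically. Equivalently one must produce $\zeta$ as a word in $\widetilde{h}_1,\widetilde{h}_2,\widetilde{t}_{1,2},\widetilde{r}_1$. I expect this to be where the real work is, and I would resolve it by invoking the finite presentations of $\SMp$, $\SMb$, $\SM$ from \cite{Hirose-Omori} (their Theorems on these groups): those presentations are given on generators that are exactly these lifts together with $\zeta$, and they contain a relation expressing $\zeta$ --- e.g.\ via a chain/lantern-type relation or via $\widetilde{r}_1^{\,2n+2}$ or $\widetilde{t}_{1,2n+1}=\zeta$-type identities --- in terms of the $\widetilde{h}_i$'s and $\widetilde{t}_{i,i+1}$'s (for $\SMb$ there is no such issue since $\zeta$ is trivial there, which is why the argument is cleanest). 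So the proof reduces to: (i) cite Birman--Hilden to pass between $\SM$-type and $\LM$-type groups; (ii) cite the presentation in \cite{Hirose-Omori} to know that the listed lifts, once all $\widetilde{h}_i$ and $\widetilde{t}_{1,2}$ are recovered, also capture $\zeta$; (iii) rerun the elementary conjugation computations of Section~\ref{section_LMp} verbatim upstairs. Parts~(2) and~(3) of Proposition~\ref{prop_SM} follow, and Theorem~\ref{thm_smod} is then immediate together with the already-known $n=1$ cases and the homological lower bounds of Theorem~\ref{thm_abel_smod}.
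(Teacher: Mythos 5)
Your overall strategy (transfer Proposition~\ref{prop_LM} upstairs via Birman--Hilden, handle $\SMb$ by the isomorphism $\theta ^1$, and for the pointed/closed cases reduce to expressing $\zeta $ as a word in the chosen lifts, using a relation of $t_{1,2n+1}$-type from the presentations in \cite{Hirose-Omori}) is the same as the paper's, and you correctly identified that producing $\zeta $ is the crux. However, there is a genuine gap in your treatment of part~(1). To convert the word expressing $\zeta $ (which involves $\widetilde{h}_i$ and $\widetilde{t}_{i,i+1}$ for \emph{all} $i$) into a word in the three generators $\widetilde{h}_1,\widetilde{t}_{1,2},\widetilde{r}_1$, one needs the exact relations $\widetilde{r}_1\widetilde{t}_{i,i+1}\widetilde{r}_1^{-1}=\widetilde{t}_{i+1,i+2}$ and $\widetilde{r}_1\widetilde{h}_i\widetilde{r}_1^{-1}=\widetilde{h}_{i+1}$ in $\SM $. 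These do \emph{not} follow from the corresponding relations downstairs (which only give equality modulo $\left< \zeta \right>$), nor from ``rerunning the computations verbatim upstairs'': conjugating $\widetilde{h}_i$ by $\widetilde{r}_1$ yields an ordered product of Dehn twists along a $(2k-1)$-chain of lifted curves, and one must pin down the starting index and ordering of that product, not merely the set of curves. This is exactly the content of the paper's Lemmas~\ref{lem_conj_r_1_t} and~\ref{lem_conj_r_1_h}; the latter requires a dedicated argument (an explicit computation for $n=1$, and for $n\geq 2$ a case analysis using the fact that $\widetilde{r}_1\widetilde{h}_i\widetilde{r}_1^{-1}$ must commute with $\zeta $ to exclude the wrong chain configuration). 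Your proposal also points to the conjugation computations of Section~\ref{section_LMp}, which concern conjugation by $h_{2n-1}\cdots h_2h_1t_{1,2}$ and are relevant to part~(2), not to the $\widetilde{r}_1$-conjugations needed for part~(1). Without a substitute for Lemma~\ref{lem_conj_r_1_h}, the identification $\zeta \in \left< \widetilde{h}_1,\widetilde{t}_{1,2},\widetilde{r}_1\right> $ is incomplete.

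A secondary remark: for $\SMp $ you propose running the extension argument and producing $\zeta $ inside $\left< \widetilde{h}_1,\widetilde{h}_2,\widetilde{h}_{2n-1}\cdots \widetilde{h}_2\widetilde{h}_1\widetilde{t}_{1,2}\right> $, which again forces you to verify upstairs conjugation relations exactly. The paper avoids all of this: it proves parts~(2) and~(3) for $\SMb $ directly from the isomorphism $\theta ^1$, and then pushes the generating set forward along the capping homomorphism $\widetilde{\iota }_\ast \colon \SMb \to \SMp $, which is surjective onto $\SMp $ by Lemma~4.9 in \cite{Hirose-Omori} and fixes the generators; no analysis of $\zeta $ is needed there. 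Your route for $\SMp $ could be repaired (e.g.\ by lifting the relation in $\SMb $ via $\theta ^1$ and applying $\widetilde{\iota }_\ast $, and importing the $\zeta $-word), but as written it both adds unnecessary work and leans on the same unjustified ``exactness upstairs'' step noted above.
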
 

Let $\theta \colon \SM \to \LM $ be the surjective homomorphism with the kernel $\left< \zeta \right> $ which is obtained from the Birman-Hilden correspondence~\cite{Birman-Hilden2}, namely $\theta $ is defined as follows: for $f\in \SM $ and a symmetric representative $\varphi \in f$ for $\zeta $, we define $\hat{\varphi }\colon \Sigma _0\to \Sigma _0$ by $\hat{\varphi }(x)=p(\varphi (\widetilde{x}))$ for some $\widetilde{x}\in p^{-1}(x)$. 
Then $\theta (f)=[\hat{\varphi }]\in \LM $. 
As a similar construction, we have an isomorphism $\theta ^1\colon \SMb \to \LMb $ by Birman and Hilden~\cite{Birman-Hilden2}.

\begin{proof}[Proof of Proposition~\ref{prop_SM} for $\SMp $ and $\SMb $]
When $n=1$, Proposition~\ref{prop_SM} for $\SMp $ and $\SMb $ is obtained from Corollary~6.13 in \cite{Hirose-Omori}. 
Assume that $n\geq 2$. 
Since $\widetilde{t}_{i,i+1}$ for $1\leq i\leq 2n+1$ and $\widetilde{h}_i$ for $1\leq i\leq 2n-1$ are lifts of $t_{i,i+1}$ and $h_i$ with respect to $p$ by Lemmas~\ref{lift-t_{i,i+1}} and \ref{lift-h_i}, respectively, Proposition~\ref{prop_SM} for $\SMb $ is immediately obtained from Proposition~\ref{prop_LM} for $\LMb $ and the isomorphism~$\theta ^1$. 
The inclusion $\widetilde{\iota }\colon \Sigma _g^1\hookrightarrow \Sigma _g=\Sigma _g^1\cup \widetilde{D}$ induces the surjective homomorphism $\widetilde{\iota }_\ast \colon \Mgb \to \mathrm{Mod}_{g,1}$ by extending homeomorphisms by the identity map on $\widetilde{D}$. 
The homomorphism $\widetilde{\iota }_\ast$ is called the \textit{capping homomorphism}. 
By Lemma~4.9 in~\cite{Hirose-Omori}, we have $\widetilde{\iota }_\ast (\SMb )=\SMp $. 
Since we have $\widetilde{\iota }_\ast (\widetilde{t}_{i,i+1})=\widetilde{t}_{i,i+1}$ for $1\leq i\leq 2n+1$ and $\widetilde{\iota }_\ast (\widetilde{h}_i)=\widetilde{h}_i$ for $1\leq i\leq 2n-1$ by the definitions, 
$\SMp $ is also generated by $\widetilde{h}_1$, $\widetilde{h}_2$, and $\widetilde{h}_{2n-1}\cdots \widetilde{h}_2\widetilde{h}_1\widetilde{t}_{1,2}$. 
Therefore, we have completed the proof of Proposition~\ref{prop_SM} for $\SMp $ and $\SMb $. 
\end{proof}

To prove Proposition~\ref{prop_SM} for $\SM $, we prepare the following lemmas. 

\begin{lem}\label{lem_conj_r_1_t}
The relation
\[
\widetilde{r}_1\widetilde{t}_{i,i+1}\widetilde{r}_1^{-1}=\widetilde{t}_{i+1,i+2}
\]
for $1\leq i\leq 2n$ holds in $\SM $. 
\end{lem}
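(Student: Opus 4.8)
The plan is to lift the already-established relation $r_1 t_{i,i+1} r_1^{-1} = t_{i+1,i+2}$ from $\LM$ to $\SM$ via the Birman–Hilden homomorphism $\theta \colon \SM \to \LM$, and then pin down the ambiguity coming from $\ker \theta = \left< \zeta \right>$. First I would record that in $\LM$ we have $r_1(\gamma_{i,i+1}) = \gamma_{i+1,i+2}$ as isotopy classes of simple closed curves (this follows from $r_1(l_i \cup l_{i+1}) = l_{i+1} \cup l_{i+2}$ for $1 \le i \le 2n-1$, which was already used in the proof of Proposition~\ref{prop_LM} for $\LM$), hence $r_1 t_{i,i+1} r_1^{-1} = t_{i+1,i+2}$ in $\LM$. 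Since $\widetilde{r}_1$, $\widetilde{t}_{i,i+1}$, $\widetilde{t}_{i+1,i+2}$ are lifts of $r_1$, $t_{i,i+1}$, $t_{i+1,i+2}$ by Lemmas~\ref{lift-t_{i,i+1}} and \ref{lift-r_1}, applying $\theta$ to the element $\widetilde{r}_1 \widetilde{t}_{i,i+1} \widetilde{r}_1^{-1} \widetilde{t}_{i+1,i+2}^{-1}$ gives the identity in $\LM$, so this element lies in $\left< \zeta \right>$, i.e.
\[
\widetilde{r}_1 \widetilde{t}_{i,i+1} \widetilde{r}_1^{-1} = \zeta^{a} \widetilde{t}_{i+1,i+2}
\]
for some $a \in \Z/k\Z$ depending a priori on $i$.

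The main work is then to show $a = 0$. The cleanest route is to compute the action of both sides on homology (or on $\pi_1$ with basepoint), or to use the fact that $\zeta$ is a periodic map of order $k$ whose rotation number around each fixed point $\widetilde{p}_j$ is a fixed primitive $k$-th root of unity, whereas a product of Dehn twists along curves disjoint from the $\widetilde{p}_j$ together with the conjugated twist has trivial rotation data at those fixed points. Concretely, I would fix the marked point (or use the surface with boundary and the isomorphism $\theta^1$): both $\widetilde{r}_1 \widetilde{t}_{i,i+1} \widetilde{r}_1^{-1}$ and $\widetilde{t}_{i+1,i+2}$ lie in the image of $\SMb$ under the capping homomorphism, and the capping homomorphism is injective modulo the boundary twist, which detects $\zeta$ — alternatively, $\zeta$ acts nontrivially on $H_1(\Sigma_g)$ (for $k \ge 3$ its eigenvalues are nontrivial $k$-th roots of unity), while $\widetilde{r}_1 \widetilde{t}_{i,i+1} \widetilde{r}_1^{-1} \widetilde{t}_{i+1,i+2}^{-1}$ can be seen to act trivially on a suitable sublattice, forcing $a=0$.

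Actually the most economical argument, which I would adopt, avoids homology entirely: since $\widetilde{r}_1$ is by construction (Lemma~\ref{lift-r_1} and its definition in Section~\ref{section_lifts}) the \emph{specific} lift described as the $\pi$-rotation $\widetilde{r}$ post-composed with an explicit word in $\widetilde{h}_j$ and $\widetilde{t}_{j,j+1}$, one checks directly that $\widetilde{r}_1$ sends the curve system $\{\gamma_i^l : 1 \le l \le k\}$ to $\{\gamma_{i+1}^l : 1 \le l \le k\}$ (as an unordered set, possibly with a cyclic relabelling of the index $l$) for $1 \le i \le 2n$. This is the lift of the downstairs computation $r_1(\gamma_{i,i+1}) = \gamma_{i+1,i+2}$ together with the observation that $\widetilde{r}_1$ intertwines the $\left< \zeta \right>$-action on the two fibers compatibly, which is exactly the content of the equivariance of the chosen lift. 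Granting this, $\widetilde{r}_1 \widetilde{t}_{i,i+1} \widetilde{r}_1^{-1} = \widetilde{r}_1 (t_{\gamma_i^1} \cdots t_{\gamma_i^k}) \widetilde{r}_1^{-1} = t_{\gamma_{i+1}^{1}} \cdots t_{\gamma_{i+1}^{k}} = \widetilde{t}_{i+1,i+2}$, since a cyclic permutation of the factors does not change the product (the curves $\gamma_{i+1}^1, \dots, \gamma_{i+1}^k$ are mutually disjoint, so their Dehn twists commute).

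The step I expect to be the main obstacle is verifying that $\widetilde{r}_1(\{\gamma_i^l\}_l) = \{\gamma_{i+1}^l\}_l$ rigorously: $\widetilde{r}_1$ is defined as a long word, and one must either trace its action on the relevant curves through Figures~\ref{fig_lift_r} and \ref{fig_scc_c_il} or argue abstractly that any lift of $r_1$ carries a lift of $\gamma_{i,i+1}$ to a lift of $\gamma_{i+1,i+2}$ and that the full preimage is permuted, the only ambiguity being a power of $\zeta$ which then cancels in the (symmetric, i.e. $\zeta$-invariant) product $t_{\gamma_i^1}\cdots t_{\gamma_i^k}$. I would present the abstract version: the preimage $p^{-1}(\gamma_{i,i+1}) = \gamma_i^1 \cup \cdots \cup \gamma_i^k$ is $\left< \zeta \right>$-invariant, $\widetilde{r}_1$ conjugates $\left< \zeta \right>$ to itself (symmetry) and covers $r_1$, hence $\widetilde{r}_1(p^{-1}(\gamma_{i,i+1})) = p^{-1}(r_1(\gamma_{i,i+1})) = p^{-1}(\gamma_{i+1,i+2}) = \gamma_{i+1}^1 \cup \cdots \cup \gamma_{i+1}^k$, and therefore $\widetilde{r}_1 \widetilde{t}_{i,i+1} \widetilde{r}_1^{-1} = \prod_{l} t_{\widetilde{r}_1(\gamma_i^l)} = \prod_l t_{\gamma_{i+1}^{l}} = \widetilde{t}_{i+1,i+2}$, with the last equality because all the $\gamma_{i+1}^l$ are disjoint and the product is independent of the order. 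This completes the proof.
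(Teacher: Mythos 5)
Your adopted argument is correct and is essentially the paper's own proof: conjugation gives $\widetilde{r}_1\widetilde{t}_{i,i+1}\widetilde{r}_1^{-1}=t_{\widetilde{r}_1(\gamma_i^1)}\cdots t_{\widetilde{r}_1(\gamma_i^k)}$, and since $\widetilde{r}_1$ lifts $r_1$ each $\widetilde{r}_1(\gamma_i^l)$ is a lift of $r_1(\gamma_{i,i+1})=\gamma_{i+1,i+2}$, so the set of curves is $\{\gamma_{i+1}^1,\dots,\gamma_{i+1}^k\}$ and the product of the (commuting, pairwise disjoint) twists is $\widetilde{t}_{i+1,i+2}$. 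The preliminary detour through $\theta$, $\ker\theta=\left<\zeta\right>$, and homology is unnecessary once you settle on this direct computation, exactly as in the paper.
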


\begin{proof}
By the definition of $\widetilde{t}_{i,i+1}$ and conjugation relations, we have 
\[
\widetilde{r}_1\widetilde{t}_{i,i+1}\widetilde{r}_1^{-1}=\widetilde{r}_1t_{\gamma _i^1}t_{\gamma _i^2}\cdots t_{\gamma _i^{k}}\widetilde{r}_1^{-1}=t_{\widetilde{r}_1(\gamma _i^1)}t_{\widetilde{r}_1(\gamma _i^2)}\cdots t_{\widetilde{r}_1(\gamma _i^{k})}
\]
for $1\leq i\leq 2n$. 
Since $\widetilde{r}_1$ is a lift of $r_1$ by Lemma~\ref{lift-r_1} and $\gamma _i^l$ for $1\leq l\leq k$ is a lift of $\gamma _{i,i+1}$ with respect to $p$, the simple closed curve $\widetilde{r}_1(\gamma _i^{l})$ for $1\leq l\leq k$ is also a lift of $r_1(\gamma _{i,i+1})=\gamma _{i+1,i+2}$ with respect to $p$. 
Hence we have $\{ \widetilde{r}_1(\gamma _i^{1}),\ \widetilde{r}_1(\gamma _i^{2}), \dots ,\ \widetilde{r}_1(\gamma _i^{k})\} =\{ \gamma _{i+1}^{1},\ \gamma _{i+1}^{2}, \dots ,\ \gamma _{i+1}^{k}\} $ and 
\[
t_{\widetilde{r}_1(\gamma _i^1)}t_{\widetilde{r}_1(\gamma _i^2)}\cdots t_{\widetilde{r}_1(\gamma _i^{k})}=t_{\gamma _{i+1}^1}t_{\gamma _{i+1}^2}\cdots t_{\gamma _{i+1}^{k}}=\widetilde{t}_{i+1,i+2}
\]
in $\SM $. 
Therefore, we have completed the proof of Lemma~\ref{lem_conj_r_1_t}.  
\end{proof}

\begin{lem}\label{lem_conj_r_1_h}
The relation
\[
\widetilde{r}_1\widetilde{h}_i\widetilde{r}_1^{-1}=\widetilde{h}_{i+1}
\]
for $1\leq i\leq 2n-1$ holds in $\SM $. 
\end{lem}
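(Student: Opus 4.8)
The plan is to mimic the proof of Lemma~\ref{lem_conj_r_1_t} almost verbatim, replacing the arcs/curves that define $\widetilde{t}_{i,i+1}$ by those that define $\widetilde{h}_i$. Recall that $\widetilde{h}_i$ is, up to the ordering dictated by the parity of $i$, a product of Dehn twists along the curves $\gamma_i^1,\dots,\gamma_i^k$ and $\gamma_{i+1}^1,\dots,\gamma_{i+1}^k$, which are the $k$ lifts of $\gamma_{i,i+1}$ and of $\gamma_{i+1,i+2}$ respectively. Since $\widetilde{r}_1$ is a lift of $r_1$ (Lemma~\ref{lift-r_1}) and $r_1(\gamma_{i,i+1})=\gamma_{i+1,i+2}$, $r_1(\gamma_{i+1,i+2})=\gamma_{i+2,i+3}$, the image $\widetilde{r}_1(\gamma_i^l)$ is again a lift of $\gamma_{i+1,i+2}$, hence lies in $\{\gamma_{i+1}^1,\dots,\gamma_{i+1}^k\}$, and likewise $\widetilde{r}_1(\gamma_{i+1}^l)\in\{\gamma_{i+2}^1,\dots,\gamma_{i+2}^k\}$. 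So conjugating $\widetilde{h}_i$ by $\widetilde{r}_1$ turns it into the \emph{same} word, but in Dehn twists along (a permutation of) the curves $\gamma_{i+1}^\bullet$ and $\gamma_{i+2}^\bullet$.

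First I would write $\widetilde{r}_1\widetilde{h}_i\widetilde{r}_1^{-1}=\prod t_{\widetilde{r}_1(\gamma_\bullet^\bullet)}$ using the conjugation formula $\varphi t_\gamma\varphi^{-1}=t_{\varphi(\gamma)}$, keeping the order of the factors fixed. Then I would identify exactly which curve $\widetilde{r}_1(\gamma_i^l)$ and $\widetilde{r}_1(\gamma_{i+1}^l)$ is, not merely up to the set $\{\gamma_{i+1}^\bullet\}$, because unlike the $\widetilde{t}$ case the ordering of the factors in $\widetilde h$ matters. The natural way is to track the $\zeta$-action: the lifts $\gamma_i^l$ are indexed so that $\zeta(\gamma_i^l)=\gamma_i^{l+1}$ (cyclically), and $\widetilde{r}_1$ either commutes with or inverts-and-conjugates $\zeta$ in a controlled way (it is a lift of $r_1$, which satisfies $r_1\langle\zeta\rangle r_1^{-1}=\langle\zeta\rangle$; more precisely one reads off from Figure~\ref{fig_lift_r} and the definition of $\widetilde{r}_1$ how $\widetilde{r}_1\zeta\widetilde{r}_1^{-1}$ acts). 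Combining this with the fact that $\widetilde{h}_{i+1}$ is defined with the \emph{opposite} internal ordering when $i$ is odd versus even, the reindexed word should line up on the nose with the defining word for $\widetilde{h}_{i+1}$.

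An alternative, cleaner route avoids curve bookkeeping entirely: use that $\widetilde{h}_i$ is the \emph{unique} lift of $h_i$ lying in $\SM$ with a prescribed behaviour — actually there is a whole coset $\langle\zeta\rangle\widetilde h_i$ of lifts — so I would instead argue via the homomorphism $\theta\colon\SM\to\LM$. We have $\theta(\widetilde{r}_1\widetilde{h}_i\widetilde{r}_1^{-1})=r_1 h_i r_1^{-1}=h_{i+1}=\theta(\widetilde{h}_{i+1})$, using $r_1 h_i r_1^{-1}=h_{i+1}$ (established in the proof of Proposition~\ref{prop_LM} for $\LM$), so $\widetilde{r}_1\widetilde{h}_i\widetilde{r}_1^{-1}=\zeta^{a}\widetilde{h}_{i+1}$ for some $a$; then one pins down $a=0$ by a single supplementary check — e.g. comparing the actions on one explicit arc $\widetilde{l}_j^l$, or comparing images in $H_1(\SM)$, or noting both sides fix the same isotopy class of some curve that $\zeta$ moves. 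I would present the direct curve-image computation as the primary argument and mention the $\theta$-shortcut as a sanity check.

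The main obstacle I expect is precisely the ordering of the $k$ Dehn-twist factors inside $\widetilde{h}_i$ versus $\widetilde{h}_{i+1}$: because the definition flips between $t_{\gamma_i^1}t_{\gamma_{i+1}^1}\cdots t_{\gamma_i^k}$ for odd $i$ and $t_{\gamma_i^k}t_{\gamma_{i+1}^k}\cdots t_{\gamma_i^1}$ for even $i$, conjugation by $\widetilde{r}_1$ must not only permute the index sets $\{\gamma_i^\bullet\}\to\{\gamma_{i+1}^\bullet\}$ but do so in a way compatible with this reversal. Getting the sign/direction of the induced permutation on the superscripts $l$ right — i.e. whether $\widetilde{r}_1$ sends $\gamma_i^l$ to $\gamma_{i+1}^l$ or to $\gamma_{i+1}^{k-l+\mathrm{const}}$ — is the one genuinely figure-dependent point, and it is where I would be most careful, reading it off from Figure~\ref{fig_lift_r} together with the relation $\widetilde{r}_1=\widetilde{r}\cdot(\text{product of }\widetilde h,\widetilde t)$ and the already-known relations among the $\widetilde h_j$ and $\widetilde t_{j,j+1}$.
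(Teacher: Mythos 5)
Your plan is viable, but it is genuinely different from what the paper does for $n\geq 2$, and the difference is instructive. The paper carries out the exact curve-tracking you propose as your primary route only in the case $n=1$ (computing $\widetilde{r}_1(\gamma_1^l)=\gamma_2^{k-l+1}$, etc., directly from $\widetilde{r}_1=\widetilde{r}\widetilde{h}_1^{-1}$). For $n\geq 2$ it deliberately avoids pinning down the permutation of the superscripts $l$ induced by $\widetilde{r}_1$ --- exactly the point you flag as the delicate, figure-dependent one. Instead it argues indirectly: the conjugate $\widetilde{r}_1\widetilde{h}_i\widetilde{r}_1^{-1}$ is a product of Dehn twists along a $(2k-1)$-chain whose curves alternate between $\mathcal{C}_{i+1}$ and $\mathcal{C}_{i+2}$; since it admits a symmetric representative it conjugates $\zeta$ to some power $\zeta^{l'}$, and evaluating on the curves $\gamma_1^l$ or $\gamma_{2n+1}^l$ (fixed by the conjugate because $n\geq 2$) forces $l'=1$, so the conjugate commutes with $\zeta$; this commutation rules out one of the two possible orderings of the chain word, and a braid-type relation applied inductively shows the remaining word equals $\widetilde{h}_{i+1}$ regardless of the unknown cyclic offset. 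So the paper trades your explicit bookkeeping for a symmetry-plus-relations argument, and your second ($\theta$-based) route is actually the one closest in spirit to it: writing $\widetilde{r}_1\widetilde{h}_i\widetilde{r}_1^{-1}=\zeta^a\widetilde{h}_{i+1}$ via the Birman--Hilden sequence and $r_1h_ir_1^{-1}=h_{i+1}$ (which is indeed available from the proof of Proposition~\ref{prop_LM}), and then killing $a$ by the action on a curve that $\zeta$ moves, is legitimate and arguably cleaner; the ``both sides fix $\gamma_1^l$ (or $\gamma_{2n+1}^l$) while $\zeta^a$ sends it to $\gamma_1^{l+a}$'' check is precisely the trick the paper uses to get $l'=1$.

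Two caveats. First, of your proposed ways to pin down $a$, the $H_1(\SM)$ comparison does not work: by Theorem~\ref{thm_abel_smod} the torsion of $H_1(\SM)$ has exponent $2$ or $4$, while $\zeta$ has order $k\geq 3$, so for odd $k$ the class of $\zeta$ is trivial in $H_1(\SM)$ and in general $\zeta^a$ need not be detected there; use the curve (or arc) check instead. Second, that check needs $n\geq 2$: for $n=1$, $i=1$ every family $\mathcal{C}_j$ meets the support of the word, so there is no ``distant'' curve fixed by both sides, and you must fall back on the direct computation (as the paper does for $n=1$). Finally, if you do keep the direct tracking as your primary argument for $n\geq 2$, be aware that it is not a small verification: the expected reversal $\gamma_i^l\mapsto\gamma_{i+1}^{k-l+\mathrm{const}}$ (consistent with $r_1$ being parity-reversing and with the odd/even flip in the definition of $\widetilde{h}_i$) still has to be established from the long defining word of $\widetilde{r}_1$, which is the labor the paper's indirect argument was designed to avoid.
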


\begin{proof}
When $n=1$, we have 
\begin{eqnarray*}
\widetilde{r}_1(\gamma _1^l)&=&\widetilde{r}\widetilde{h}_1^{-1}(\gamma _1^l)=\widetilde{r}(t_{\gamma _1^{k}}^{-1}t_{\gamma _{2}^{k-1}}^{-1}t_{\gamma _1^{k-1}}^{-1}\cdots t_{\gamma _{2}^2}^{-1}t_{\gamma _1^2}^{-1}t_{\gamma _{2}^1}^{-1}t_{\gamma _1^1}^{-1}(\gamma _1^l))\\
&=&\widetilde{r}(\gamma _2^{l-1})=\gamma _2^{k-l+1}
\end{eqnarray*}
for $2\leq l\leq k$. 
Similarly, when $n=1$, we have $\widetilde{r}_1(\gamma _1^1)=\gamma _2^k$ and $\widetilde{r}_1(\gamma _2^l)=\gamma _3^{k-l-1}$ for $1\leq l\leq k$. 
Thus, we have 
\begin{eqnarray*}
\widetilde{r}_1\widetilde{h}_1\widetilde{r}_1^{-1}&=&\widetilde{r}_1t_{\gamma _1^1}t_{\gamma _{2}^1}t_{\gamma _1^2}t_{\gamma _{2}^2}\cdots t_{\gamma _1^{k-1}}t_{\gamma _{2}^{k-1}}t_{\gamma _1^{k}}\widetilde{r}_1^{-1}\\
&=&t_{\widetilde{r}_1(\gamma _1^1)}t_{\widetilde{r}_1(\gamma _{2}^1)}t_{\widetilde{r}_1(\gamma _1^2)}t_{\widetilde{r}_1(\gamma _{2}^2)}\cdots t_{\widetilde{r}_1(\gamma _1^{k-1})}t_{\widetilde{r}_1(\gamma _{2}^{k-1})}t_{\widetilde{r}_1(\gamma _1^{k})}\\
&=&t_{\gamma _{2}^{k}}t_{\gamma _{3}^{k}}t_{\gamma _{2}^{k-1}}t_{\gamma _{3}^{k-1}}\cdots t_{\gamma _{2}^{2}}t_{\gamma _{3}^{2}}t_{\gamma _{2}^{1}}\\
&=&\widetilde{h}_2.
\end{eqnarray*}
Thus Lemma~\ref{lem_conj_r_1_h} holds for $n=1$. 

Assume that $n\geq 2$ and $1\leq i\leq 2n+1$ is odd.  
Then, we have $\widetilde{h}_i=t_{\gamma _i^1}t_{\gamma _{i+1}^1}t_{\gamma _i^2}t_{\gamma _{i+1}^2}\cdots t_{\gamma _i^{k-1}}t_{\gamma _{i+1}^{k-1}}t_{\gamma _i^{k}}$. 
A collection $(c_1,\ c_2,\ \dots ,\ c_m)$ of simple closed curves $c_1,\ c_2,\ \dots ,\ c_m$ on $\Sigma _g$ is a $m$\textit{-chain} on $\Sigma _g$ if $c_i$ transversely intersects with $c_{i+1}$ at one point for $1\leq i\leq m-1$ and $c_i$ is disjoint from $c_j$ for $j\not \in \{ i-1,\ i,\ i+1 \}$. 
Since $\widetilde{r}_1$ is a homeomorphism and $\bigl( \gamma _i^1,\ \gamma _{i+1}^1,\ \gamma _i^2,\ \gamma _{i+1}^2,\ \cdots \gamma _i^{k-1},\ \gamma _{i+1}^{k-1},\ \gamma _i^{k}\bigr) $ is a $(2k-1)$-chain on $\Sigma _g$, the collection $\bigl( \widetilde{r}_1(\gamma _i^1),\ \widetilde{r}_1(\gamma _{i+1}^1),\ \widetilde{r}_1(\gamma _i^2),\ \widetilde{r}_1(\gamma _{i+1}^2),\ \cdots \widetilde{r}_1(\gamma _i^{k-1}),\ \widetilde{r}_1(\gamma _{i+1}^{k-1}),\ \widetilde{r}_1(\gamma _i^{k})\bigr) $ is also a $(2k-1)$-chain on $\Sigma _g$. 
Put $\mathcal{C}_{i}=\{ \gamma _{i}^{1},\ \gamma _{i}^{2}, \dots ,\ \gamma _{i}^{k}\}$ for $1\leq i\leq 2n+1$. 
Since $\{ \widetilde{r}_1(\gamma _i^{1}),\ \widetilde{r}_1(\gamma _i^{2}), \dots ,\ \widetilde{r}_1(\gamma _i^{k})\} =\mathcal{C}_{i+1}$ for $1\leq i\leq 2n$ by an argument in the proof of Lemma~\ref{lem_conj_r_1_t}, we have $\bigl( \widetilde{r}_1(\gamma _i^1),\ \widetilde{r}_1(\gamma _{i+1}^1),\ \widetilde{r}_1(\gamma _i^2),\ \widetilde{r}_1(\gamma _{i+1}^2),\ \cdots \widetilde{r}_1(\gamma _i^{k-1}),\ \widetilde{r}_1(\gamma _{i+1}^{k-1}),\ \widetilde{r}_1(\gamma _i^{k})\bigr) \in \mathcal{C}_{i+1}\times \mathcal{C}_{i+2}\times \mathcal{C}_{i+1}\times \mathcal{C}_{i+2}\times \cdots \times \mathcal{C}_{i+1}\times \mathcal{C}_{i+2}\times \mathcal{C}_{i+1}$. 

Since $\widetilde{h}_i$ and $\widetilde{r}_1$ have symmetric representative for $\zeta$, the product 
\[
\widetilde{r}_1\widetilde{h}_i\widetilde{r}_1^{-1}=t_{\widetilde{r}_1(\gamma _i^1)}t_{\widetilde{r}_1(\gamma _{i+1}^1)}t_{\widetilde{r}_1(\gamma _i^2)}t_{\widetilde{r}_1(\gamma _{i+1}^2)}\cdots t_{\widetilde{r}_1(\gamma _i^{k-1})}t_{\widetilde{r}_1(\gamma _{i+1}^{k-1})}t_{\widetilde{r}_1(\gamma _i^{k})}
\]
also has a symmetric representative for $\zeta$, namely, there exists an integer $1\leq l^\prime \leq k-1$ such that $(\widetilde{r}_1\widetilde{h}_i\widetilde{r}_1^{-1})\zeta (\widetilde{r}_1\widetilde{h}_i\widetilde{r}_1^{-1})^{-1}=\zeta ^{l^\prime }$ in $\SM $. 
Since $n\geq 2$, for $1\leq l\leq k$, we have $\widetilde{r}_1\widetilde{h}_i\widetilde{r}_1^{-1}(\gamma _1^l)=\gamma _1^l$ or $\widetilde{r}_1\widetilde{h}_i\widetilde{r}_1^{-1}(\gamma _{2n+1}^l)=\gamma _{2n+1}^l$ with orientations. 
Hence, for $1\leq l\leq k$, we also have $\zeta ^{l^\prime }(\gamma _1^l)=(\widetilde{r}_1\widetilde{h}_i\widetilde{r}_1^{-1})\zeta (\widetilde{r}_1\widetilde{h}_i\widetilde{r}_1^{-1})^{-1}(\gamma _1^l)=\zeta (\gamma _1^l)$ or $\zeta ^{l^\prime }(\gamma _{2n+1}^l)=(\widetilde{r}_1\widetilde{h}_i\widetilde{r}_1^{-1})\zeta (\widetilde{r}_1\widetilde{h}_i\widetilde{r}_1^{-1})^{-1}(\gamma _{2n+1}^l)=\zeta (\gamma _{2n+1}^l)$ with orientations.   
Thus, $l^\prime =1$ and $\widetilde{r}_1\widetilde{h}_i\widetilde{r}_1^{-1}$ commutes with $\zeta $. 

If $\widetilde{r}_1\widetilde{h}_i\widetilde{r}_1^{-1}=t_{\gamma _{i+1}^{l^\prime }}t_{\gamma _{i+2}^{l^\prime +1}}t_{\gamma _{i+1}^{l^\prime +1}}\cdots t_{\gamma _{i+2}^{l^\prime +k-2}}t_{\gamma _{i+1}^{l^\prime +k-2}}t_{\gamma _{i+2}^{l^\prime +k-1}}t_{\gamma _{i+1}^{l^\prime +k-1}}$ for $1\leq i \leq 2n-1$ and some $1\leq l^\prime \leq k$, where we consider the indices $l^\prime , l^\prime +1, \dots , l^\prime +k-1$ mod $k$, then $\widetilde{r}_1\widetilde{h}_i\widetilde{r}_1^{-1}$ does not commute with $\zeta $. 
In fact, it is necessarily for the coincidence of $\zeta t_{\gamma _{i+1}^{l^\prime }}t_{\gamma _{i+2}^{l^\prime +1}}t_{\gamma _{i+1}^{l^\prime +1}}\cdots t_{\gamma _{i+2}^{l^\prime +k-2}}t_{\gamma _{i+1}^{l^\prime +k-2}}t_{\gamma _{i+2}^{l^\prime +k-1}}t_{\gamma _{i+1}^{l^\prime +k-1}}\zeta ^{-1}=t_{\gamma _{i+1}^{l^\prime +1}}t_{\gamma _{i+2}^{l^\prime +2}}t_{\gamma _{i+1}^{l^\prime +2}}\cdots t_{\gamma _{i+2}^{l^\prime +k-1}}t_{\gamma _{i+1}^{l^\prime +k-1}}t_{\gamma _{i+2}^{l^\prime +k}}t_{\gamma _{i+1}^{l^\prime +k}}$ and $t_{\gamma _{i+1}^{l^\prime }}t_{\gamma _{i+2}^{l^\prime +1}}t_{\gamma _{i+1}^{l^\prime +1}}\cdots t_{\gamma _{i+2}^{l^\prime +k-2}}t_{\gamma _{i+1}^{l^\prime +k-2}}t_{\gamma _{i+2}^{l^\prime +k-1}}t_{\gamma _{i+1}^{l^\prime +k-1}}$ to hold the relation
\begin{align*}
&(t_{\gamma _{i+1}^{l^\prime +1}}t_{\gamma _{i+2}^{l^\prime +2}}t_{\gamma _{i+1}^{l^\prime +2}}\cdots t_{\gamma _{i+2}^{l^\prime +k-1}}t_{\gamma _{i+1}^{l^\prime +k-1}})t_{\gamma _{i+2}^{l^\prime +k}}t_{\gamma _{i+1}^{l^\prime +k}}(t_{\gamma _{i+1}^{l^\prime +1}}t_{\gamma _{i+2}^{l^\prime +2}}t_{\gamma _{i+1}^{l^\prime +2}}\cdots t_{\gamma _{i+2}^{l^\prime +k-1}}t_{\gamma _{i+1}^{l^\prime +k-1}})^{-1}\\
&=t_{\gamma _{i+1}^{l^\prime }}t_{\gamma _{i+2}^{l^\prime +1}}
\end{align*}
in $\SM $. 
However, this relation does not hold in $\SM $. 
Thus, we have $\widetilde{r}_1\widetilde{h}_i\widetilde{r}_1^{-1}=t_{\gamma _{i+1}^{l^\prime +k-1}}t_{\gamma _{i+2}^{l^\prime +k-1}}t_{\gamma _{i+1}^{l^\prime +k-2}}t_{\gamma _{i+2}^{l^\prime +k-2}}\cdots t_{\gamma _{i+1}^{l^\prime +1}}t_{\gamma _{i+2}^{l^\prime +1}}t_{\gamma _{i+1}^{l^\prime }}$ for some $1\leq l^\prime \leq k$, where we consider the indices $l^\prime , l^\prime +1, \dots , l^\prime +k-1$ mod $k$. 
Since we have the relation 
\begin{align*}
&(t_{\gamma _{i+1}^{l^\prime +k-2}}t_{\gamma _{i+2}^{l^\prime +k-2}}\cdots t_{\gamma _{i+1}^{l^\prime +1}}t_{\gamma _{i+2}^{l^\prime +1}}t_{\gamma _{i+1}^{l^\prime }})^{-1}t_{\gamma _{i+1}^{l^\prime +k-1}}t_{\gamma _{i+2}^{l^\prime +k-1}}(t_{\gamma _{i+1}^{l^\prime +k-2}}t_{\gamma _{i+2}^{l^\prime +k-2}}\cdots t_{\gamma _{i+1}^{l^\prime +1}}t_{\gamma _{i+2}^{l^\prime +1}}t_{\gamma _{i+1}^{l^\prime }})\\
&=t_{\gamma _{i+2}^{l^\prime }}t_{\gamma _{i+1}^{l^\prime -1}}
\end{align*}
in $\SM$, by using this relation inductively, we have
\begin{eqnarray*}
\widetilde{r}_1\widetilde{h}_i\widetilde{r}_1^{-1}&=&t_{\gamma _{i+1}^{l^\prime +k-1}}t_{\gamma _{i+2}^{l^\prime +k-1}}t_{\gamma _{i+1}^{l^\prime +k-2}}t_{\gamma _{i+2}^{l^\prime +k-2}}\cdots t_{\gamma _{i+1}^{l^\prime +1}}t_{\gamma _{i+2}^{l^\prime +1}}t_{\gamma _{i+1}^{l^\prime }}\\
&=&t_{\gamma _{i+1}^{k}}t_{\gamma _{i+2}^{k}}t_{\gamma _{i+1}^{k-1}}t_{\gamma _{i+2}^{k-1}}\cdots t_{\gamma _{i+1}^{2}}t_{\gamma _{i+2}^{2}}t_{\gamma _{i+1}^{1}}\\
&=&\widetilde{h}_{i+1}. 
\end{eqnarray*}
Thus the relation in Lemma~\ref{lem_conj_r_1_h} for odd $1\leq i\leq 2n-1$ holds in $\SM $.  
By an argument similar to the odd $i$ and $n\geq 2$ case, we can show that the relation in Lemma~\ref{lem_conj_r_1_h} for even $2\leq i\leq 2n-2$ and $n\geq 2$ also holds in $\SM $. 
Therefore we have completed the proof of Lemma~\ref{lem_conj_r_1_h}. 
\end{proof}

By Lemma~4.6 in \cite{Hirose-Omori}, the forgetful homomorphism $\mathcal{F}\colon \SMp \to \SM $ which is defined by the correspondence $[\varphi ]\mapsto [\varphi ]$ is injective. 
Hence we regard $\SMp $ as a subgroup of $\SM $. 
Recall that we have the surjective homomorphism $\widetilde{\iota }_\ast \colon \SMb \to \SMp $ defined in the proof of Proposition~\ref{prop_SM} for $\SMp $ and $\SMb $.

\begin{proof}[Proof of Proposition~\ref{prop_SM} for $\SM $]
From the Birman-Hilden correspondence~\cite{Birman-Hilden2}, we have the following exact sequence:  
\begin{eqnarray*}
1\longrightarrow \left< \zeta \right>  \longrightarrow \SM \stackrel{\theta }{\longrightarrow }\LM \longrightarrow 1. 
\end{eqnarray*}
Since $\LM $ is generated by $h_1$, $t_{1,2}$, and $r_1$ for $n\geq 1$ by Proposition~\ref{prop_LM} and $\widetilde{h}_1$, $\widetilde{t}_{1,2}$, and $\widetilde{r}_1$ are lifts of $h_1$, $t_{1,2}$, and $r_1$ with respect to $p$ by Lemmas~\ref{lift-t_{i,i+1}}, \ref{lift-h_i}, and \ref{lift-r_1}, respectively, $\SM $ is generated by $\widetilde{h}_1$, $\widetilde{t}_{1,2}$, $\widetilde{r}_1$, and $\zeta $ by the exact sequence above. 
It is enough for completing the proof of Proposition~\ref{prop_SM} to prove that $\zeta $ is a product of $\widetilde{h}_1$, $\widetilde{t}_{1,2}$, and $\widetilde{r}_1$. 

Let $\zeta ^\prime $ be a self-homeomorphism on $\Sigma _g$ which is described as a result of a $(-\frac{2\pi }{k})$-rotation of $\Sigma _g^1\subset \Sigma _g$ fixing the disk $\widetilde{D}$ pointwise (for instance, in the case of $k=3$, $\zeta ^\prime $ is a $(-\frac{2\pi }{3})$-rotation as on the top in Figure~\ref{fig_lift_t_partial-d}). 
Since the restriction of $\zeta ^\prime $ to $\widetilde{D}$ is identity, we regard $\zeta ^\prime $ as an element in $\SMb $. 
The isotopy class of a self-homeomorphism on $\Sigma _0$ (resp. $\Sigma _g$) relative to $D\cup \B $ (resp. $\widetilde{D}$) is determined by the isotopy class of the image of $L$ (resp. $\widetilde{L}=p^{-1}(L)$) relative to $D\cup \B $ (resp. $\widetilde{D}$). 
Since we can show that $p(\zeta ^\prime (\widetilde{L}))$ is isotopic to $t_{\partial D}(L)$ relative to $D\cup \B $ (in the case $k=3$, see Figure~\ref{fig_lift_t_partial-d}), $\zeta ^\prime $ is a lift of $t_{\partial D}=t_{1,2n+1}$ with respect to $p$. 

By the relations~(4)~(a) of Theorems~5.1 and 6.10 in~\cite{Hirose-Omori}, we have  
\[
t_{1,2n+1}=t_{2n-1,2n}^{-n+1}\cdots t_{3,4}^{-n+1}t_{1,2}^{-n+1}h_{2n-1}\cdots h_3h_1h_1h_3\cdots h_{2n-1}(h_{2n-2}\cdots h_{2}h_{1})^{n}
\]
in $\LMb $ and its lift 
\[
\zeta ^\prime =\widetilde{t}_{2n-1,2n}^{-n+1}\cdots \widetilde{t}_{3,4}^{-n+1}\widetilde{t}_{1,2}^{-n+1}\widetilde{h}_{2n-1}\cdots \widetilde{h}_3\widetilde{h}_1\widetilde{h}_1\widetilde{h}_3\cdots \widetilde{h}_{2n-1}(\widetilde{h}_{2n-2}\cdots \widetilde{h}_{2}\widetilde{h}_{1})^{n}
\]
in $\SMb $. 
By the definition of $\zeta ^\prime $, we have $\widetilde{\iota }_\ast (\zeta ^\prime )=\zeta $ clearly. 
Thus we have 
\[
\zeta =\widetilde{t}_{2n-1,2n}^{-n+1}\cdots \widetilde{t}_{3,4}^{-n+1}\widetilde{t}_{1,2}^{-n+1}\widetilde{h}_{2n-1}\cdots \widetilde{h}_3\widetilde{h}_1\widetilde{h}_1\widetilde{h}_3\cdots \widetilde{h}_{2n-1}(\widetilde{h}_{2n-2}\cdots \widetilde{h}_{2}\widetilde{h}_{1})^{n}
\]
in $\SM $. 
By Lemmas~\ref{lem_conj_r_1_t} and \ref{lem_conj_r_1_t}, we have the relations $\widetilde{h}_{i}=\widetilde{r}_1^{i-1}\widetilde{h}_1\widetilde{r}_1^{-(i-1)}$ for $2\leq i\leq 2n-1$ and $\widetilde{t}_{i,i+1}=\widetilde{r}_1^{i-1}\widetilde{t}_{1,2}\widetilde{r}_1^{-(i-1)}$ for $2\leq i\leq 2n-1$. 
Therefore $\zeta $ is a product of $\widetilde{h}_1$, $\widetilde{t}_{1,2}$, and $\widetilde{r}_1$ and we have completed the proof of Proposition~\ref{prop_SM}.  
\end{proof}

\begin{figure}[h]
\includegraphics[scale=1.3]{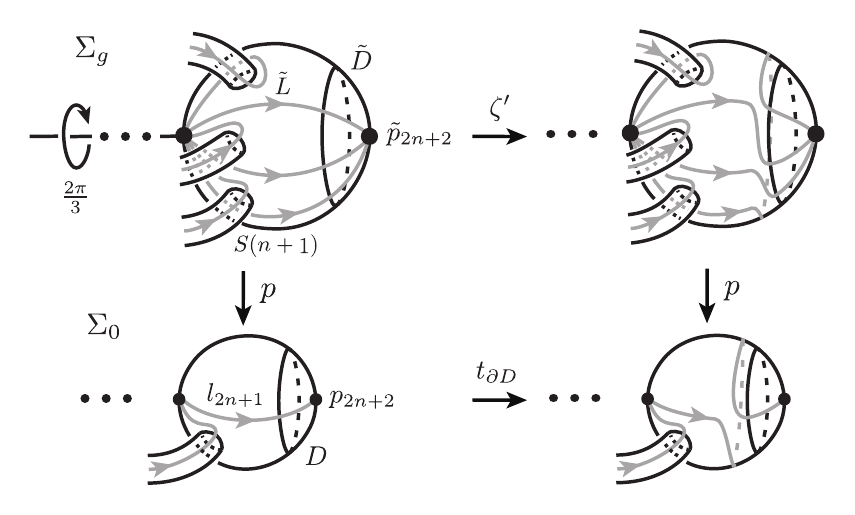}
\caption{The images of $L$ by $t_{\partial D}$ and of $\widetilde{L}$ by $\zeta ^\prime $ for $k=3$.}\label{fig_lift_t_partial-d}
\end{figure}

\par
{\bf Acknowledgement:} The author would like to express his gratitude to Susumu Hirose, for his encouragement and helpful advices. 
The author was supported by JSPS KAKENHI Grant Numbers JP19K23409 and 21K13794.

\end{document}